\newtheorem{thm}{Theorem}[section]
\newtheorem{cor}[thm]{Corollary}
\newtheorem{prop}[thm]{Proposition}
\newtheorem{quest}[thm]{Question}
\newtheorem{conj}[thm]{Conjecture}
\theoremstyle{definition}
\newtheorem{defn}[thm]{Definition}
\theoremstyle{remark}
\newtheorem{rem}[thm]{Remark}
\numberwithin{equation}{section}
\title{Internal Structure of Addition Chains: Well-Ordering}
\author{Harry Altman}
\date{September 1, 2014}
\begin{document}

\newcommand{\cpx}[1]{\|#1\|}
\newcommand{\dft}{\delta}
\newcommand{\dftl}{\dft^\acl}
\newcommand{\dftA}{\dft^{A}}
\newcommand{\acl}{\ell}
\newcommand{\st}{{st}}
\newcommand{\posf}{f}
\newcommand{\Cs}{C_s}
\newcommand{\bc}{q}

\newcommand{\N}{{\mathbb N}}
\newcommand{\R}{{\mathbb R}}
\newcommand{\Z}{{\mathbb Z}}
\newcommand{\Q}{{\mathbb Q}}
\newcommand{\sD}{{\mathscr{D}}}

\newcommand{\floor}[1]{{\lfloor #1 \rfloor}}
\newcommand{\ceil}[1]{{\lceil #1 \ceil}}

\begin{abstract}
An \emph{addition chain} for $n$ is defined to be a sequence
$(a_0,a_1,\ldots,a_r)$ such that $a_0=1$, $a_r=n$, and, for any $1\le k\le r$, there
exist $0\le i, j<k$ such that $a_k = a_i + a_j$; the number $r$ is called the
length of the addition chain.  The shortest length among addition chains for
$n$, called the addition chain length of $n$, is denoted $\acl(n)$.  The number
$\acl(n)$ is always at least $\log_2 n$; in this paper we consider the
difference $\dftl(n):=\acl(n)-\log_2 n$, which we call the addition chain
defect.  First we use this notion to show that for any $n$, there exists $K$
such that for any $k\ge K$, we have $\acl(2^k n)=\acl(2^K n)+(k-K)$.  The main
result is
that the set of values of $\dftl$ is a well-ordered subset of $[0,\infty)$, with
order type $\omega^\omega$.  The results obtained here are analogous to the
results for integer complexity obtained in \cite{cpxwo} and \cite{paper1}.  We
also prove similar well-ordering results for restricted forms of addition chain
length, such as star chain length and Hansen chain length.
\end{abstract}

\maketitle

\section{Introduction}
\label{intro}

An \emph{addition chain} for $n$ is defined to be a sequence
$(a_0,a_1,\ldots,a_r)$ such that $a_0=1$, $a_r=n$, and, for any $1\le k\le r$, there
exist $0\le i, j<k$ such that $a_k = a_i + a_j$; the number $r$ is called the
length of the addition chain.  The shortest length among addition chains for
$n$, called the \emph{addition chain length} of $n$, is denoted $\acl(n)$.
Addition chains were introduced in 1894 by H. Dellac \cite{Dellac} and
reintroduced in 1937 by A.~Scholz \cite{aufgaben},
who raised a series of questions about them. 
They have  been much studied  in the context of computation of
powers, since an addition chain for $n$ of length $r$ allows one to compute
$x^n$ from $x$ using $r$ multiplications.  Extensive surveys on the topic can be
found in Knuth \cite[Section 4.6.3]{TAOCP2} and Subbarao \cite{subreview}.

Addition chain length is approximately logarithmic; it satisfies the bounds
\[ 
\log_2 n \le \acl(n) \le \floor{\log_2 n}+\nu_2(n)-1, 
\]
in which  $\nu_2(n)$ counts the number of $1$'s in the binary expansion of $n$.
A.~Brauer \cite{Brauer} proved in 1939 that $\acl(n)\sim \log_2 n$.

The addition chain length function $\acl(n)$ seems complicated and hard to
compute.  An outstanding open problem about it is the {\em Scholz-Brauer
conjecture} \cite[Question 3]{aufgaben}, which asserts that 
\[
\acl( 2^n -1) \le n+ \acl(n) -1.
\]
To investigate it Brauer \cite{Brauer} introduced a restricted type of addition
chain called a {\em star chain}, and later authors introduced other restricted
types of addition chains, such as {\em Hansen chains}, discussed  in Section
\ref{variations}.  Later  Knuth \cite{TAOCP2} introduced the quantity
$s(n):=\acl(n)-\floor{\log_2 n}$, which he called the number of \emph{small
steps} of $n$.  This notion was subsequently used by other authors
\cite{sub1962, stolarsky, thurber} investigating the general behavior of
$\acl(n)$ and the Scholz-Brauer conjecture.  The Scholz-Brauer conjecture has
been verified to hold for $n<5784689$, by computations of Clift \cite{clift}.

In this paper we introduce and study an invariant of addition chain length
related to small steps, where instead of rounding we subtract off the exact
logarithm $\log_2 n$.  
\begin{defn}
The \emph{addition chain defect} $\dftl(n)$ of $n$ is
 $$
 \dftl(n) := \acl(n) - \log_2 n.
 $$
\end{defn}

This quantity is related to the number of small steps of $n$ by the equation
\[
s(n)=\lceil\dftl(n)\rceil.
\]
The lower bound result above shows that
\[ 
\dftl(n) \ge 0,
\]
with equality holding for $n = 2^k$ for $k \ge 0$.  In a sense, $\dftl(n)$
encodes the ``hard part'' of computing $\acl(n)$; $\log_2 n$ is an
easy-to-compute approximation to $\acl(n)$, and $\dftl(n)$ is the extra little
bit that is not so easy to compute.  The object of this paper is to show that
the addition chain defect encodes a subtle structural regularity of the addition
chain length function.

\subsection{Main results} \label{sec10}

The main results of the paper concern the structure of the set of all addition
chain defect values.

\begin{defn}
We define $\mathscr{D}^\acl$ to be the set of all addition chain defect values:
\[ \mathscr{D}^\acl = \{ \dftl(n) : n\in \N \}. \]
\end{defn}

The main result of this paper is the following well-ordering  theorem.

\begin{thm} \label{frontpagethmspecial}
{\em ($\acl$-defect well-ordering theorem)}
The set $\mathscr{D}^\acl$ is a well-ordered subset of $\R$, of order type
$\omega^\omega$.
\end{thm}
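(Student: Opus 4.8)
The plan is to adapt the approach used for integer complexity in \cite{cpxwo} and \cite{paper1}. The starting point is that $\dftl$ is nonincreasing under doubling: appending a doubling step to an addition chain for $n$ gives $\acl(2n)\le\acl(n)+1$, so $\dftl(2n)\le\dftl(n)$, and since $\dftl\ge 0$ the sequence $(\dftl(2^k n))_k$ converges; by the stabilization theorem proved earlier it is in fact eventually constant. Write $\delta^{\acl}_{\mathrm{st}}(n)$ for this eventual value, the \emph{stable defect}, and $\mathscr{D}^{\acl}_{\mathrm{st}}$ for the set of its values; note $\mathscr{D}^{\acl}_{\mathrm{st}}\subseteq\mathscr{D}^\acl$ since $\delta^{\acl}_{\mathrm{st}}(n)=\dftl(2^K n)$ for large $K$. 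Two elementary facts will be used repeatedly: $\dftl(n)$ is always an integer minus $\log_2 n$, so two defect values can be close only if their lengths agree and their logarithms are nearly equal (``rigidity''); and concatenating addition chains gives $\acl(mn)\le\acl(m)+\acl(n)$, hence $\dftl(mn)\le\dftl(m)+\dftl(n)$.

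The heart of the proof is a structure theorem for numbers of small defect: for every real $\beta>0$ there is a \emph{finite} list of parametrized families
\[
F(a_1,\dots,a_t)=2^{a_t}\bigl(2^{a_{t-1}}\bigl(\cdots(2^{a_1}c_0+c_1)\cdots\bigr)+c_{t-1}\bigr)+c_t
\]
— more precisely, bounded-size expressions built from a bounded set of positive-integer constants using addition and the free ``doubling exponents'' $a_1,\dots,a_t$ — such that every $n$ with $\dftl(n)<\beta$, and every $n$ with $\delta^{\acl}_{\mathrm{st}}(n)<\beta$, is a value of one of them, the number $t$ of free exponents being bounded in terms of $\beta$. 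The mechanism is an analysis of \emph{small steps}: since $s(n)=\lceil\dftl(n)\rceil$, a shortest chain for such an $n$ has at most $\lceil\beta\rceil$ small steps, and one normalizes such a chain so that its doubling steps fall into boundedly many consecutive blocks — whose lengths become the parameters $a_i$ — separated by a bounded amount of combinatorial data recording how the small steps combine earlier chain terms. Carrying out this normalization, i.e.\ controlling the possible shapes of near-optimal addition chains with few small steps, is the step I expect to be the main obstacle, since addition chains are considerably less rigid than the $\{1,+,\times\}$-expressions available for integer complexity; for the restricted variants (star chains, Hansen chains) it is actually easier, their definitions already limiting the admissible shapes.

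Granting the structure theorem, the rest is bookkeeping. Within a family $F$ with $t$ free exponents, $\log_2 F(a_1,\dots,a_t)$ differs by an exponentially small amount from a fixed affine function of the tuple, and (this too is part of what the normalization yields, together with the stabilization theorem applied block by block) $\acl$ of the family is eventually affine in the tuple; hence the defect, for all $a_i$ large, is monotone in each coordinate and converges whenever any nonempty set of coordinates tends to infinity. An induction on $t$ then shows the defect values from $F$ form a well-ordered set of order type at most $\omega^t$: sending $a_t\to\infty$ collapses $F$ to a $(t-1)$-parameter family whose values are a limit, below which the values with $a_t$ bounded accumulate, each bounded level contributing order type at most $\omega^{t-1}$. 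Taking the finite union over the families attached to $\beta$ shows that $\mathscr{D}^\acl\cap[0,\beta)$ (and $\mathscr{D}^{\acl}_{\mathrm{st}}\cap[0,\beta)$) is well-ordered of order type below $\omega^\omega$. Any infinite strictly decreasing sequence in $\mathscr{D}^\acl$ is bounded, hence lies in some $\mathscr{D}^\acl\cap[0,\beta)$ and so cannot exist; thus $\mathscr{D}^\acl$ is well-ordered, and letting $\beta\to\infty$ bounds its order type by $\omega^\omega$.

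For the reverse inequality it suffices to embed $\omega^k$ into $\mathscr{D}^\acl$ for every $k$. I would use explicit $k$-parameter families whose addition chain lengths are known exactly — for instance suitably spread-out nested binary forms such as $2^{a_k}(2^{a_{k-1}}(\cdots(2^{a_1}+1)\cdots)+1)+1$, for which the binary method gives length $a_1+\cdots+a_k+k$ and a matching lower bound follows from the standard small-step counting estimates (cf.\ \cite{thurber}) once the exponents are large and well separated; passing to stable defects, where stabilization makes the relevant lengths directly computable, removes any residual difficulty. Sending $a_1,a_2,\dots,a_k$ to infinity in that order, the defect estimates above show the corresponding defect values form a subset of $\mathscr{D}^\acl$ order-isomorphic to $\omega^k$; as $k$ is arbitrary and the order type is at most $\omega^\omega$, it equals $\omega^\omega$. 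Finally, the same families and arguments, run with star chain length or Hansen chain length in place of $\acl$, yield the corresponding well-ordering results for those invariants, since the upper bounds come from chains already of the restricted type while the lower bounds use only that the restricted length dominates $\acl$.
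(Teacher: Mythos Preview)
Your strategy mirrors the integer-complexity argument too closely and stumbles on exactly the point you flag as ``the main obstacle'': the structure theorem via chain normalization. Producing a finite list of parametrized families by rearranging the doublings in a shortest chain is genuinely delicate for addition chains --- there is no evident way to push the small steps into bounded positions while keeping the resulting constants under control --- and your subsequent claim that $\acl$ is ``eventually affine in the tuple'' on each family is also unproved; it would require a multi-parameter version of stabilization that is not available. Without these two ingredients the induction on $t$ does not get started, and both are exactly the places where the lack of a dynamic-programming recursion for $\acl$ bites.

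The paper sidesteps both problems by parametrizing not by chain structure but by the \emph{binary expansion} of $n$. Sch\"onhage's inequality $\dftl(n)\ge\log_2\nu_2(n)-\Cs$ bounds $\nu_2(n)$ on any initial segment $\mathscr{D}^\acl\cap[0,r]$, so that segment is covered by the finitely many sets $\{\dftl(n):\nu_2(n)=k\}$ for $k\le\bc(r)$. For fixed $k$, the binary-method defects $S_k=\{k-1+\lfloor\log_2 n\rfloor-\log_2 n:\nu_2(n)=k\}$ have order type exactly $\omega^{k-1}$ by an explicit bijection with $\omega^{k-1}$, and since $\acl(n)$ differs from the binary-method length $\lfloor\log_2 n\rfloor+k-1$ by some integer in $\{0,1,\dots,k-1\}$ (forced by $0\le\dftl(n)\le k-1$), the true defect set $\{\dftl(n):\nu_2(n)=k\}$ is a union of at most $k$ integer translates of pieces of $S_k$. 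This pins its order type between $\omega^{k-1}$ and $\omega^{k-1}k$ with no need to compute $\acl$ exactly or to establish any affine behaviour. The lower bound on the total order type falls out of the same partition argument --- one of the pieces of $S_k$ must already carry the full $\omega^{k-1}$ --- so your explicit nested families and their exact length computations are not needed either.
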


This theorem may at first appear to come out of nowhere, but we will discuss why
is is true in Section~\ref{methods}.

A second result is related to the determination of the set of integers having a
given value $\alpha$ of the addition chain defect. We will show that If
$\dftl(n_1)= \dftl(n_2)=\alpha$ with $n_1 \ne n_2$ then it is necessary (but not
always sufficient)  that $n_1 = 2^k n_2$ for some (positive or negative) integer
$k$.

It is always the case that $\acl(2n)\le \acl(n)+1$, and the equality $\acl(2n)=
\acl(n)+1$ corresponds to $\dftl(2n) = \dftl(n)$.  One might hope that we always
have $\acl(2n)=\acl(n)+1$, but this is not the case; sometimes $\dftl(2n)<
\dftl(n)$.  In fact, infinitely many counterexamples are known (Thurber
\cite{thurber}).  However infinitely many $n$ have this
property, which is part of a stabilization phenomenon.

\begin{defn}
A number $m$ is called \emph{$\acl$-stable} if 
\[
\acl(2^k m)=\acl(m)+k, \quad \mbox{for all} \quad k \ge 0.
\] 
Otherwise it is called \emph{$\acl$-unstable}.
\end{defn}

Using the defect, we will prove:

\begin{thm}
\label{power-of-2-special}
{\em ($\acl$-stability theorem)} 
We have:
\begin{enumerate}
\item If $\alpha$ is a value of $\dftl$, and
\[
S(\alpha) := \{m:~~\dftl(m) = \alpha\}
\]
then there is a unique integer $n$ such that $S(\alpha)$ has either the form
$\{ n\cdot 2^k: 0 \le k \le K\}$ for some finite $K$ or else the form $\{ n\cdot
2^k: k\ge 0\}$. The integer $n$ will be  called the {\em leader} of
$S(\alpha)$.
\item The set $S(\alpha)$ is infinite if and only if $\alpha$ is the smallest
defect occurring among all defects $\dftl(2^k n)$ for $k\ge0$, where $n$ is the
leader of $S(\alpha)$.
\item For a fixed odd integer $n$, the sequence $\{ \dftl(n \cdot 2^k): \, k \ge
0\}$ is non-increasing.  This sequence takes on finitely many values, all
differing by integers, culminating in a smallest value $\alpha$ such that if
$\dftl(m)=\alpha$ and
$k\ge 0$, then
\[ \acl(m\cdot 2^k) = \acl(m) + k.\]
\end{enumerate}
\end{thm}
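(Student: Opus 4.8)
The plan is to build everything on two basic monotonicity facts and a finiteness fact, all of which feed into the defect picture. The first is the elementary inequality $\acl(2m) \le \acl(m)+1$, which translates, using $\log_2(2m) = \log_2 m + 1$, into $\dftl(2m) \le \dftl(m)$. Iterating, for a fixed odd $n$ the sequence $\dftl(n\cdot 2^k)$ is non-increasing; since all these defects differ from $\dftl(n)$ by nonnegative reals of the form $\dftl(n) - (\text{integer})$ (because $\acl(n\cdot 2^k) \ge \acl(n\cdot 2^{k'})$ behaves additively whenever equality holds, and in general $\acl(n\cdot 2^k) - k$ is a non-increasing integer sequence bounded below by $0$ once we subtract the unbounded part), the sequence of defects takes only finitely many distinct values, all congruent mod $1$, and stabilizes at some smallest value $\alpha_0$. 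This proves part (3) modulo the ``culminating'' clause: once $\dftl(m) = \alpha_0$ with $m = n\cdot 2^{k_0}$ achieving the minimum, any further doubling cannot decrease the defect (it's already minimal among doublings of $n$), so $\dftl(m\cdot 2^k) = \dftl(m)$, i.e. $\acl(m\cdot 2^k) = \acl(m)+k$, for all $k \ge 0$. The key observation making this airtight is that the minimal value among $\{\dftl(n\cdot 2^j)\}$ is attained and, once attained, persists — monotonicity does the rest.

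For part (1), I would argue as follows. Suppose $\dftl(m_1) = \dftl(m_2) = \alpha$. Writing $m_i = n_i \cdot 2^{e_i}$ with $n_i$ odd, the relation $\acl(m_i) = \log_2 n_i + e_i + \alpha$ forces $\log_2 n_1 - \log_2 n_2 = (\acl(m_1)-\acl(m_2)) - (e_1 - e_2) \in \Z$, hence $n_1/n_2$ is a power of $2$; since both are odd, $n_1 = n_2 =: n$. Thus every element of $S(\alpha)$ is of the form $n\cdot 2^k$ for a single fixed odd $n$, and by part (3) the set of $k \ge 0$ for which $\dftl(n\cdot 2^k) = \alpha$ is exactly the set where the non-increasing sequence takes its value $\alpha$ — which, being a ``level set'' of a non-increasing eventually-constant sequence, is either a finite interval $\{k_0, k_0+1, \ldots, k_1\}$ or a final segment $\{k : k \ge k_0\}$ (the latter precisely when $\alpha = \alpha_0$ is the stabilized value). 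Setting $n' := n\cdot 2^{k_0}$ gives $S(\alpha)$ in the stated form with leader $n'$; uniqueness of the leader is immediate since it is the minimal element of $S(\alpha)$.

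Part (2) then falls out: $S(\alpha)$ is infinite iff the level set of $k$ is a final segment iff $\alpha$ equals the stabilized minimal value $\alpha_0$ of the sequence $\{\dftl(n\cdot 2^k): k\ge 0\}$, where $n$ is the odd number underlying the leader — equivalently the leader of $S(\alpha_0)$, which is what the statement says. I expect the main obstacle to be the finiteness claim inside part (3): I need to know that $\acl(n\cdot 2^k) - k$ does not strictly decrease infinitely often, i.e. that the non-increasing integer sequence $k \mapsto \acl(n\cdot 2^k) - k$ is bounded below. This is exactly the content of the first quoted theorem in the introduction (for every $n$ there is $K$ with $\acl(2^k n) = \acl(2^K n) + (k-K)$ for $k \ge K$), so I would simply invoke that result; absent it, one would need a separate argument showing the defect cannot decrease below $0$ — which it can't, since $\dftl \ge 0$ always — combined with the fact that a non-increasing sequence of reals all congruent mod $1$ and bounded below must stabilize. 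In fact this last remark shows the finiteness is essentially free given $\dftl \ge 0$, so the real work is just the bookkeeping translating between $\acl$, $\log_2$, and odd parts, and recognizing level sets of monotone sequences.
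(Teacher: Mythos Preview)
Your proposal is correct and follows essentially the same route as the paper: the key ingredients---$\dftl(2m)\le\dftl(m)$ with integer difference, nonnegativity of $\dftl$ forcing stabilization, and the observation that equal (or rationally differing) defects force a power-of-$2$ relation---are exactly those the paper isolates as Proposition~\ref{multdft}, Theorem~\ref{cj1}, and Proposition~\ref{eqdefect}. You were also right to catch that invoking the introduction's stabilization statement would be circular and to supply the direct argument from $\dftl\ge 0$ instead.
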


That is to say, while doubling a number $n$ may not increase its addition chain
length by precisely $1$, if one starts with a fixed $n$ and begins doubling,
eventually one will reach a point where the length goes up by $1$ each time.
This result is easy to prove and is established in  Section~\ref{secdft}.

We use Theorem~\ref{power-of-2-special} to define in Section \ref{morestab} a
notion of the ``stable defect'' and ``stable length'' of a number $n$ -- these
notions measure what the defect and the addition chain length would be ``if $n$
were stable''.

The two theorems above are analogues for addition chains of results this author
previously showed for another notion called integer complexity \cite{cpxwo,
paper1} which has its own measure of defect.  In Section \ref{seccpx} we
discuss integer complexity, define its associated notion of defect $\delta(n)$,
and compare and contrast it with addition chain length.  Integer complexity
has the feature that it is definable by a dynamic programming recursion, and
this feature played an important role in the proof of well-ordering for defect
values in \cite{cpxwo}.  In contrast addition chain length is apparently
not definable by dynamic programming recursion, and the proofs here require new
ideas.

The proof of the main result for addition chains
works in much greater generality, and we will obtain Theorem
\ref{frontpagethmspecial} as a special case of Theorem \ref{frontpagethm} below.

\subsection{Methods}\label{methods} 

A key result which substitutes for dynamic programming and allows well ordering
to the proved in the addition chain case is 
the following result of Sch\"onhage \cite{schon}:

\begin{thm}[Sch\"onhage]
\label{schonthm}
For any $n \ge 1$,
\[ 
\dftl(n) \ge \log_2\nu_2(n) - \Cs, 
\]
where
\[ 
\Cs := \frac{2}{3} + \frac{2}{3}\log_2 3 - \frac{1}{\log 2} - \log_2 \log
\frac{4}{3} + \sum_{k=0}^\infty \log_2 (1+2^{-6\cdot 2^k + 1}) \le 2.13.
 \]
\end{thm}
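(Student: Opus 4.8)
The plan is to follow Sch\"onhage's combinatorial argument from \cite{schon}; it is self-contained and uses nothing from the rest of this paper, only elementary facts about addition chains. First I would reduce the claim to a statement about the ``deficit'' accumulated by a minimal chain. Fix a minimal addition chain for $n$, which we may take to be strictly increasing, $1=a_0<a_1<\cdots<a_r=n$ with $r=\acl(n)$. Since $a_0=1$ we have $\log_2 n=\sum_{k=1}^r \log_2(a_k/a_{k-1})$, and since $a_k=a_i+a_j$ with $i,j<k$ forces $a_k\le 2a_{k-1}$, we get
\[
\dftl(n)=r-\log_2 n=\sum_{k=1}^r \log_2\frac{2a_{k-1}}{a_k}\ \ge\ 0,
\]
with each summand nonnegative. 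So it suffices to bound this sum below by $\log_2\nu_2(n)-\Cs$. The structural fact to exploit is that the $k$-th summand is close to $0$ only when the step $a_k=a_i+a_j$ has \emph{both} parents $a_i,a_j$ nearly as large as $a_{k-1}$, while the Hamming weight $\nu_2$ climbs from $\nu_2(1)=1$ to $\nu_2(n)$ only through steps combining numbers of genuinely different size (or genuinely different binary support) — and these are exactly the steps forced to contribute positive deficit. The goal is thus a quantitative ``weight-versus-magnitude'' trade-off: building a number of Hamming weight $w$ costs total deficit at least $\log_2 w$ up to an additive constant.

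The heart of the argument is a recursive descent. Writing $n=a_i+a_j$, at least one parent carries at least half the Hamming weight of $n$ (subadditivity of $\nu_2$ under addition, with equality failing precisely when there are carries); iterating this backwards through the chain produces a nested sequence of chain elements whose weights drop by controlled factors. A step along this descent that both preserves most of the weight and is cheap in the deficit sum is forced to be, to good approximation, a doubling — which contributes nothing new to the weight — so the weight can only be maintained at the cost of either a doubling (useless for weight) or a deficit-paying step. Turning this dichotomy into an inequality, with the deficit sum and $\log_2$(weight) as the bookkeeping quantities, yields a bound of the form ``deficit $\ge\log_2 w$ minus an accumulated error'', where the error contributed at successive levels of the recursion involves a parameter that roughly squares from one level to the next.

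Summing those level-by-level errors is precisely what produces the closed form $\Cs=\frac{2}{3}+\frac{2}{3}\log_2 3-\frac{1}{\log 2}-\log_2\log\frac{4}{3}+\sum_{k=0}^\infty\log_2(1+2^{-6\cdot 2^k+1})$: the convergent series is the tail of these doubling-exponent errors, the $\log_2\log\frac{4}{3}$ term comes from the base case of the recursion, and the $\frac{1}{\log 2}=\log_2 e$ term is the usual discrepancy between a discrete count and its continuous (entropy/integral) relaxation used to package the main term $\log_2 w$ cleanly. I would set this up by choosing the right potential — a suitably normalized combination of $\log$-magnitude and $\log$-Hamming-weight, equivalently a renormalized deficit — that provably drops by a definite amount at each expensive step while the recursion closes with a numerically sharp rather than crude constant.

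The main obstacle is exactly this last point: getting the constant down to the stated value (in particular below $2.13$). A cruder version of the same descent gives $\dftl(n)\ge\log_2\nu_2(n)-C$ for some unspecified finite $C$ with much less effort, and for all the applications in this paper only the finiteness of $\Cs$ is used; so I would present the descent in full, prove finiteness cleanly, and then either optimize the constants to recover Sch\"onhage's value or simply cite \cite{schon} for the sharp form.
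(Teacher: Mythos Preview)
The paper does not prove Theorem~\ref{schonthm}; it is quoted from Sch\"onhage with a citation to \cite{schon} and used throughout as a black box (indeed, as Remark~\ref{rmk47} notes, only the finiteness of $\bc(r)$ is needed for the well-ordering theorem). So there is no proof here for your proposal to be compared against. Your closing suggestion --- to cite \cite{schon} for the sharp constant --- is exactly what the paper does, for the whole statement.

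Taken on its own merits, your outline has a genuine gap. The decomposition $\dftl(n)=\sum_k\log_2(2a_{k-1}/a_k)$ is correct, but the descent through high-weight parents does not control this sum the way you claim. The key assertion --- that a step cheap in deficit is forced to be an approximate doubling and hence cannot drop the Hamming weight --- is false. For a concrete family, take (with $N>2m$) the parents $p=2^N+(2^{m-1}-1)$ and $q=2^{N-m}(2^m-1)$: they have disjoint binary support, each has weight $m$, their sum $b=p+q$ has weight $2m$, and $p/q=1+O(2^{-m})$; so if $p$ is the immediate predecessor of $b$ the step $b=p+q$ carries deficit only $O(2^{-m})$, yet descending to either parent halves the weight. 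Plugging this into the natural induction (prove $D(b)\ge\log_2\nu_2(b)-C$ for each chain element $b$, where $D(b)$ is the accumulated deficit up to $b$) one finds that the inductive step loses an additive $1$ each time and does not close --- so the sketch as written does not even yield a finite constant, let alone $\Cs$. Sch\"onhage's actual argument is more delicate than a single-branch weight descent; bridging this gap is precisely where the real work lies, and your proposal does not indicate how to do it.
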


The proof of Theorem \ref{frontpagethm} (and hence of Theorem
\ref{frontpagethmspecial}) requires only the assertion that $\dftl(n)$ can be
bounded below by some increasing unbounded function of $\nu_2 (n)$.  In fact,
similar but weaker inequalities were proven earlier by E.~G.~Thurber
\cite{thurberduke} and A.~Cottrell \cite{cottrell}.  However we can use
Sch\"{o}nhage's inequality to prove more detailed information on defect values;
see Theorem~\ref{rwo1} and Corollary~\ref{omegakbds}.

The idea of the proof is to consider initial segments of $\mathscr{D}^\acl$, say
$\mathscr{D}^\acl\cap[0,r]$.  By Theorem~\ref{schonthm}, numbers of bounded
defect have boundedly many $1$'s in their binary expansion.  But as we will show
in Proposition~\ref{wo2}, the set of defects arising from numbers with exactly
$k$ occurrences of $1$ in their binary expansion is well-ordered and has order
type at least $\omega^{k-1}$ but less than $\omega^k$.  From this fact we can
conclude (Theorem~\ref{rwo1}) that $\mathscr{D}^\acl\cap[0,r]$ is well-ordered
and has order type less than $\omega^\omega$, and thence that $\mathscr{D}^\acl$
itself is well-ordered with order type at most $\omega^\omega$.  To get the
lower bound on the order type, we note that $\mathscr{D}^\acl$ includes, for
every $k$, the set of defects arising from numbers with exactly $k$ occurrences
of $1$ in their binary expansion; by above, this means its order type must be at
least $\omega^k$ for every natural $k$, and hence at least $\omega^\omega$.

It is worth noting here that Sch\"onhage's inequality was proved as a partial
result working towards the following conjecture of Knuth and Stolarsky
\cite{TAOCP2, stolarsky, subreview}:
\begin{conj}[Knuth, Stolarsky]
\label{ksconj}
For all $n$, $s(n) \ge \log_2 \nu_2(n)$.
\end{conj}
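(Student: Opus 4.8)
The conjecture is equivalent to the purely combinatorial assertion that an addition chain for $n$ having $s$ \emph{small steps} --- those steps $a_k=a_i+a_j$ which leave $\floor{\log_2 a_k}$ unchanged --- satisfies $\nu_2(n)\le 2^{s}$. Indeed $s(n)=\lceil\dftl(n)\rceil$ is an integer, so $s(n)\ge\log_2\nu_2(n)$ is the same as $s(n)\ge\lceil\log_2\nu_2(n)\rceil$, i.e.\ $\nu_2(n)\le 2^{s(n)}$. Sch\"onhage's Theorem~\ref{schonthm} already gives $\nu_2(n)\le 2^{\,s(n)+\Cs}$ with $\Cs\le 2.13$, which is the conjecture up to the multiplicative factor $2^{\Cs}<4.4$; the task is thus to push this factor down to $1$. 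It should be noted at the outset that simply shrinking the additive constant in $\dftl(n)\ge\log_2\nu_2(n)-\Cs$ does not suffice by itself: when $\nu_2(n)=2^j+1$ the conjecture demands $\dftl(n)>j$, and no bound of the shape $\dftl(n)\ge\log_2\nu_2(n)-\Cs'$ with fixed $\Cs'>0$ can give this once $j$ is large. A proof must genuinely use the integrality of the small-step count, not merely a real inequality for $\dftl$.

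The plan is to upgrade the elementary inequality $\nu_2(a+b)\le\nu_2(a)+\nu_2(b)$ --- which already shows that each step at most doubles the digit count, hence $\nu_2(n)\le 2^{\acl(n)}$ --- to an amortized statement that recoups the $\floor{\log_2 n}$ big steps. Concretely: fix an optimal chain for $n$, which may be taken increasing, and seek a potential $\Phi$ on its partial sums with $\Phi(1)=0$, $\Phi(n)\ge\log_2\nu_2(n)$, $\Phi$ non-increasing across every big step (in particular across doublings, which preserve $\nu_2$ exactly), and $\Phi$ increasing by at most $1$ across each small step; telescoping then yields $s(n)\ge\Phi(n)\ge\log_2\nu_2(n)$. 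Sch\"onhage's argument is essentially of this form, but it routes through a continuous relaxation of a discrete merging optimization --- together with the convergent product $\prod_{k\ge 0}(1+2^{-6\cdot 2^k+1})$, whose contribution is already negligible --- and the constant $\tfrac23+\tfrac23\log_2 3-\tfrac1{\log 2}-\log_2\log\tfrac43+\cdots$ is precisely the price paid for that slack. I would try to replace the relaxation by exact bookkeeping of how carries merge blocks of binary digits, and to choose $\Phi$ so that it is tight on the extremal configurations, which seem to involve numbers of the shape $2^t-1$ and their repeated block-halvings.

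The main obstacle is that big steps need not preserve $\nu_2$: for instance in the chain $1,2,3,5,10,13,23$ the big step $23=13+10$ raises $\nu_2$ from $3$ to $4$. So $\Phi=\log_2\nu_2$ is itself not monotone, and such increases must be charged against budget generated elsewhere in the chain; making this charging \emph{exact} rather than lossy is the crux, and is exactly what Sch\"onhage's method falls short of. This is the same obstruction --- the lack of a clean dynamic-programming recursion for $\acl$ --- that the rest of the paper sidesteps by invoking only the weak, constant-laden form of the inequality. A realistic endgame, paralleling Clift's finite verification of the Scholz--Brauer conjecture for $n<5784689$ and the known classifications of numbers with small $s(n)$, would be to establish the conjecture for all $n$ with $\nu_2(n)$ exceeding an explicit threshold via a sharpened Sch\"onhage estimate, and to dispatch the remaining finitely many values of $s(n)$ using the structure theory of low-defect numbers together with a computer search. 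I would not expect a clean argument to drop out; honestly, closing the $2.13$ gap is the heart of the matter, and the conjecture remains open.
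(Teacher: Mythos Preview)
The statement you were asked to prove is not a theorem in the paper; it is stated there as Conjecture~\ref{ksconj} (the Knuth--Stolarsky conjecture), and the paper offers no proof of it. The paper explicitly treats it as open, remarking only that Sch\"onhage's Theorem~\ref{schonthm} is a partial result toward it and that ``better understanding of the set of addition chain defects could lead to a proof of this conjecture.'' Your write-up is consistent with this: you do not actually prove the conjecture either, and you say as much in your final sentence. What you have written is a thoughtful research outline --- a reformulation as $\nu_2(n)\le 2^{s(n)}$, a proposed potential-function/amortization strategy, an identification of the obstruction (big steps can increase $\nu_2$), and a realistic endgame combining a sharpened Sch\"onhage bound with finite case analysis --- but it is not a proof, and you correctly do not claim it is one.

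So there is no meaningful comparison to make between your proof and the paper's, because neither contains a proof. If the intent was genuinely to prove Conjecture~\ref{ksconj}, the gap is simply that the argument is not carried out: the potential $\Phi$ is never constructed, the charging scheme is never specified, and the ``sharpened Sch\"onhage estimate'' that would handle large $\nu_2(n)$ is not established. These are exactly the hard parts, as you yourself acknowledge.
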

It is possible that better understanding of the set of addition chain defects
could lead to a proof of this conjecture.

\subsection{Extensions and variations of the main theorem}
\label{variations}

The discussion above treated the addition chain length of $n$, but the
theorems can be proved more generally for other, similar notions of addition
chain complexity that put further restrictions on the allowed set $A$ of
addition chains.
A common variation on the notion of addition chains is the notion of the
\emph{star chain}; a star chain is an addition chain $(a_0,\ldots,a_r)$ with the
additional restriction that for any $k\ge 1$, there exists $i<k$ such that
$a_k=a_{k-1}+a_i$.  The length of the shortest star chain for $n$, called the
\emph{star chain length} of $n$, is denoted by $\acl^*(n)$. Naturally
$\acl^{*}(n) \ge \acl(n)$, and it is known that $\acl^{*}(n) \sim \log_2 n$.  We will see below that the results of this paper
apply to star chain length as well as addition length.  Indeed, we can
generalize much further.

Let $A$ be a fixed set of addition chains, such as the set of all addition
chains or the set of star chains.  We will be considering the length of the
shortest addition chain in $A$ for a number $n$; we denote this length by
$\acl^A(n)$.  However we will  not allow $A$ to be an arbitrary set of addition
chains, but require it to satisfy the following admissibility condition.

\begin{defn}
We define a set $A$ of addition chains to be \emph{admissible} if
\begin{enumerate}
\item For any $n$, there is an addition chain in $A$ for $n$ of length at most
$\lfloor \log_2 n \rfloor + \nu_2(n) - 1$.  That is to say, $\acl^A(n)$ is
defined and is at most $\lfloor \log_2 n \rfloor + \nu_2(n) - 1$.
\item For any $n$, $\acl^A(2n)\le \acl^A(n)+1$.
\end{enumerate}
\end{defn}

The first of these conditions says that for any $n$, there are chains in $A$ for
$n$ which are at least as short as those produced by the binary method.  So,
for instance, if $A$ includes all chains produced by the binary method, it
satisfies the first condition.  The meaning of the second condition is
straightforward.  It is is satisfied if, for instance, given any chain in $A$
for $n$, appending $2n$ again yields a chain in $A$, or if given any chain in
$A$ for $n$, doubling all the entries and prepending $1$ again yields a chain in
$A$.

Interesting examples of admissible sets of addition chains include: 
\begin{enumerate}
\item the set of all addition chains; 
\item the set of star chains;
\item the set of Hansen chains (also known as $\acl^0$-chains, see  Hansen
\cite{Hansen}, also \cite{TAOCP2, subreview});
\item
the set of chains which are star or quasi-star (see Subbarao \cite{subreview}).
\end{enumerate}
Of course, there are trivial examples as well.  For instance, one could let be
$A$ be just the set of addition chains produced by the binary method; then one
would always have $\acl^A(n)=\lfloor \log_2 n \rfloor + \nu_2(n) - 1$.  But the
particular set of addition chains chosen will mostly not matter so long as it
satisfies those two conditions.

One interesting set of addition chains that has been studied but which is not
admissible is the set of {\em Lucas chains}, also known as \emph{LUC chains};
they satisfy the second condition but not the first.  (For instance, the
shortest Lucas chain for $17$ has length $6$.)  See Kutz \cite{Kutz} for more
information on these.

Unless stated otherwise, we assume throughout that $A$ is an admissible set of
addition chains.  We can now make definitions analogous to those above with
$\acl^A$ replacing $\acl$:

\begin{defn}
For an admissible set $A$ of addition chains, we define the {\em $A$-defect}
\[ 
\dftA(n) := \acl^A(n) - \log_2 n. 
\]
If $A$ is the set of all addition chains, we just write $\dftl(n)$.  If $A$
is the set of star chains, we write $\dft^{*}(n)$.
\end{defn}

\begin{defn}
For an admissible set $A$ of addition chains, we define
\[ \mathscr{D}^A = \{ \dftA(n) : n\in \N \}. \]
If $A$ is the set of all addition chains, we just write $\mathscr{D}^\acl$.
If $A$ is the set of star chains, we write $\mathscr{D}^*$.
\end{defn}

With these, we can once again define:

\begin{defn}
A number $m$ is called \emph{$A$-stable} if $\acl^A(2^k m)=k+\acl^A(m)$ holds
for every $k \ge 0$.  Otherwise it is called \emph{$A$-unstable}.  If $A$ is the
set of all addition chains, we write \emph{$\acl$-stable}.  If $A$ is the set of
star chains, we write \emph{$*$-stable}.
\end{defn}

And with these, we once again get:

\begin{thm}
\label{power-of-2}
{\em ($A$-stability theorem)} 
Fix an admissible set $A$ of addition chains.  Then we have:
\begin{enumerate}
\item If $\alpha$ is a value of $\dftA$, and
\[
S(\alpha) := \{m:~~\dftA(m) = \alpha\}
\]
then there is a unique integer $n$ such that $S(\alpha)$ has either the form
$\{ n\cdot 2^k: 0 \le k \le K\}$ for some finite $K$ or else the form $\{ n\cdot
2^k: k\ge 0\}$. The integer $n$ will be  called the {\em leader} of
$S(\alpha)$.
\item The set $S(\alpha)$ is infinite if and only if $\alpha$ is the smallest
defect occurring among all defects $\dftl(2^k n)$ for $k\ge0$, where
$n$ is the leader of $S(\alpha)$.
\item For a fixed odd integer $n$, the sequence $\{ \dftA(n \cdot 2^k): \, k \ge
0\}$ is non-increasing.  This sequence takes on finitely many values, all
differing by integers, culminating in a smallest value $\alpha$ such that if
$\dftA(m)=\alpha$ and
$k\ge 0$, then
\[ \acl^A(m\cdot 2^k) = \acl^A(m) + k.\]
\end{enumerate}
\end{thm}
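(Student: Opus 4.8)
The plan is to reduce the whole statement to three elementary observations together with some bookkeeping about powers of $2$. First, admissibility condition~(2) gives $\acl^A(2n)\le\acl^A(n)+1$, so that
\[
\dftA(2n)=\acl^A(2n)-1-\log_2 n\le\acl^A(n)-\log_2 n=\dftA(n),
\]
and hence for each fixed $n$ the sequence $k\mapsto\dftA(2^k n)$ is non-increasing. Second, every chain in $A$ is an addition chain, so $\acl^A(n)\ge\acl(n)\ge\log_2 n$ and therefore $\dftA(n)\ge 0$. Third --- and this is the one genuinely load-bearing point --- if $\dftA(m_1)=\dftA(m_2)$, then, since $\dftA(m_i)=\acl^A(m_i)-\log_2 m_i$ with $\acl^A$ integer-valued, we get $\log_2 m_1-\log_2 m_2=\acl^A(m_1)-\acl^A(m_2)\in\Z$, so $m_1/m_2$ is an integer power of $2$. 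In particular, if $S(\alpha)\ne\emptyset$ and $n_0$ is the odd part of a chosen element of $S(\alpha)$, then every element of $S(\alpha)$ lies on the dyadic ray $R:=\{2^k n_0:k\ge 0\}$.

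For parts~(1) and~(2) I would work inside $R$. Put $E:=\{k\ge 0:\dftA(2^k n_0)=\alpha\}$, so that $S(\alpha)=\{2^k n_0:k\in E\}$ and $E\ne\emptyset$. Since $k\mapsto\dftA(2^k n_0)$ is non-increasing, $E$ is an interval of integers: if $k_1<k_2<k_3$ with $k_1,k_3\in E$, then $\alpha=\dftA(2^{k_1}n_0)\ge\dftA(2^{k_2}n_0)\ge\dftA(2^{k_3}n_0)=\alpha$ forces $k_2\in E$. Setting $n:=2^{\min E}n_0$, this gives $S(\alpha)=\{n\cdot 2^k:0\le k\le K\}$ (when $E$ is finite, with $K=\max E-\min E$) or $S(\alpha)=\{n\cdot 2^k:k\ge 0\}$ (when $E$ is infinite); as $n=\min S(\alpha)$ in both cases, $n$ is determined by $S(\alpha)$ and is the unique integer with this property, which proves~(1). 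For~(2): with $n$ the leader we have $\dftA(n)=\alpha$, so the non-increasing sequence $k\mapsto\dftA(2^k n)$ starts at the value $\alpha$; hence the set $E$ (now a prefix of $\N$) is infinite precisely when this sequence is identically $\alpha$, which happens precisely when $\alpha$ is the least value among $\{\dftA(2^k n):k\ge 0\}$. (In the displayed statement of~(2) the symbol $\dftl$ should be read as $\dftA$, the two agreeing when $A$ is the set of all addition chains.)

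For part~(3), fix an odd integer $n$ and set $a_k:=\dftA(2^k n)$. By the first observation $\{a_k\}$ is non-increasing, by the second it is bounded below by $0$, and the identity $a_k-a_0=\acl^A(2^k n)-\acl^A(n)-k\in\Z$ shows that all its terms lie in the single coset $a_0+\Z$. A non-increasing sequence that is bounded below and takes values in a discrete set is eventually constant, so $\{a_k\}$ assumes only finitely many values, any two of which differ by an integer, and it reaches a least value $\alpha=a_{k_0}=a_{k_0+1}=\cdots$. To establish the stability claim, suppose $\dftA(m)=\alpha$. Then by the third observation $m$ and $2^{k_0}n$ differ by a power of $2$, and since $n$ is odd this forces $m=2^j n$ for some $j\ge 0$, so $a_j=\dftA(m)=\alpha$; as $\{a_k\}$ is non-increasing with infimum $\alpha$, we get $a_k=\alpha$ for all $k\ge j$, and therefore for every $k\ge 0$
\[
\dftA(2^k m)=a_{j+k}=\alpha=\dftA(m),\qquad\text{equivalently}\qquad \acl^A(2^k m)=\acl^A(m)+k.
\]

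I do not expect a serious obstacle: the argument is entirely elementary once the third observation is in hand, and that observation is just the remark that the $\log_2$ term in the definition of the defect forces two integers of equal $A$-defect to be $2$-power multiples of one another. The only points demanding care are the bookkeeping that separates the odd part $n_0$ from the $2$-power exponent, and the verification that the relevant exponent sets are genuine intervals (respectively prefixes) of $\N$ --- both of which follow at once from the monotonicity of $\dftA$ under doubling established in the first observation.
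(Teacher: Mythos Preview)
Your proof is correct and follows essentially the same approach as the paper's. The paper proves the three ``observations'' separately as Proposition~\ref{multdft}, Theorem~\ref{cj1}, and Proposition~\ref{eqdefect}, and then its proof of Theorem~\ref{power-of-2} consists of a few lines citing these; you have simply inlined those arguments and carried out the interval/prefix bookkeeping more explicitly (and in part~(3) you spell out the stability claim for arbitrary $m$ with $\dftA(m)=\alpha$ more carefully than the paper does).
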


Another interesting variation on the set $\mathscr{D}^\acl$ or $\mathscr{D}^A$
is to restrict to defects of stable numbers.  We make the following definition:

\begin{defn}
\label{stabdftdef}
We define an \emph{$A$-stable defect} to be the defect of an $A$-stable number,
and define $\mathscr{D}^A_\st$ to be the set of all $A$-stable defects.
\end{defn}

This double use of the word ``stable'' could potentially be ambiguous if we had
a positive integer $n$ which were also a defect.  However, we will see
(Corollary~\ref{intdft}) that only integer which occurs as a defect is $0$, and
so this does not occur.

With these definitions, we obtain:

\begin{thm}
\label{frontpagethm}
{\em ($A$-defect well ordering theorem)} 
For any admissible set $A$ of addition chains,  
the sets $\mathscr{D}^A$ and $\mathscr{D}^A_\st$ are well-ordered subsets of
$\R$, of order type $\omega^\omega$.  In particular, the sets
$\mathscr{D}^\acl$, $\mathscr{D}^*$, $\mathscr{D}^\acl_\st$, and
$\mathscr{D}^*_\st$ are well-ordered, with order type $\omega^\omega$.
\end{thm}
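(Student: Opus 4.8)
The plan is to deduce Theorem~\ref{frontpagethm} by combining the Schönhage-style lower bound with a stratification of the defect set according to $\nu_2(n)$, exactly as outlined in Section~\ref{methods}. First I would fix an admissible set $A$ and establish the analogue of Theorem~\ref{schonthm} in the $A$-setting: since $\acl^A(n)\ge\acl(n)$ whenever $A$ is a set of addition chains, we get $\dftA(n)\ge\dftl(n)\ge\log_2\nu_2(n)-\Cs$ for free. (The only property of this bound we actually use is that it is an increasing, unbounded function of $\nu_2(n)$.) Hence for any real $r$, the set $\{n:\dftA(n)\le r\}$ consists of numbers with $\nu_2(n)\le 2^{r+\Cs}$, i.e.\ boundedly many binary $1$'s.

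Next I would prove the key structural lemma — the $A$-version of Proposition~\ref{wo2} referenced in the excerpt — stating that $\mathscr{D}^A_k:=\{\dftA(n):\nu_2(n)=k\}$ is well-ordered with order type in $[\omega^{k-1},\omega^k)$. This is where the real work lies, and I expect it to be \emph{the main obstacle}. The intended mechanism: a number with $k$ binary $1$'s and large defect can be ``decomposed'' along an optimal chain into pieces with strictly fewer $1$'s, letting one set up an induction on $k$; the upper bound $\omega^k$ comes from the fact that the possible ``shapes'' of such decompositions, weighted by the defects of the smaller pieces (which lie in $\mathscr{D}^A_{<k}$ of order type $<\omega^{k-1}$ by induction), assemble into something of order type below $\omega\cdot\omega^{k-1}=\omega^k$; the lower bound $\omega^{k-1}$ comes from exhibiting an explicit family. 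Getting this to work with only the two admissibility axioms, rather than with a dynamic-programming recursion as in \cite{cpxwo}, is the delicate point, and presumably where Theorem~\ref{schonthm} and the stabilization results of Theorem~\ref{power-of-2} are brought to bear.

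Granting that lemma, the rest is bookkeeping on ordinals. For the initial segment $\mathscr{D}^A\cap[0,r]$: by the $A$-Schönhage bound it is contained in $\bigcup_{k\le N}\mathscr{D}^A_k$ for $N=\lfloor 2^{r+\Cs}\rfloor$, a finite union of well-ordered sets, hence well-ordered of order type $<\omega^N<\omega^\omega$ (the analogue of Theorem~\ref{rwo1}). Since $\mathscr{D}^A=\bigcup_r(\mathscr{D}^A\cap[0,r])$ is an increasing union of well-ordered sets each of order type $<\omega^\omega$, it is well-ordered of order type $\le\omega^\omega$. Conversely $\mathscr{D}^A\supseteq\mathscr{D}^A_k$ for every $k$, so its order type is $\ge\omega^{k-1}$ for every $k$, hence $\ge\omega^\omega$; thus equality.

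Finally, for $\mathscr{D}^A_\st$: being a subset of the well-ordered set $\mathscr{D}^A$, it is automatically well-ordered of order type $\le\omega^\omega$, so only the lower bound needs attention. Here I would invoke Theorem~\ref{power-of-2}(3): each odd $n$ has a stable defect $\alpha$ (the terminal value of the non-increasing sequence $\dftA(n\cdot2^k)$), and $\alpha$ differs from $\dftA(n)$ by an integer, so passing to the stable defect changes $\nu_2$-stratum membership only up to the integer-shift structure. One then checks that the explicit family realizing order type $\omega^{k-1}$ inside $\mathscr{D}^A_k$ can be taken to consist of stable numbers (or replaced by their stabilizations without collapsing the order type), giving $\mathrm{ot}(\mathscr{D}^A_\st)\ge\omega^{k-1}$ for all $k$, hence $\ge\omega^\omega$, completing the proof. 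The named specializations ($\mathscr{D}^\acl$, $\mathscr{D}^*$, etc.) follow since the set of all chains and the set of star chains are admissible, as noted after the definition of admissibility.
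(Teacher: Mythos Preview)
Your high-level strategy---stratify by $\nu_2$, bound each stratum's order type, and use Sch\"onhage's inequality to make the stratification finite on each initial segment---is exactly what the paper does. The ordinal bookkeeping at the end is also fine.

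Where you go astray is in your description of the key lemma (the $A$-version of Proposition~\ref{wo2}). You flag it as ``the main obstacle'' and propose to prove it by an induction on $k$ that decomposes $n$ ``along an optimal chain'' into pieces with fewer binary $1$'s. This is both unnecessary and unlikely to work as stated: for an arbitrary admissible set $A$ there is no reason optimal chains should split off pieces with strictly smaller $\nu_2$, and you never say how the induction hypothesis assembles into the claimed bound. The paper's actual argument for Proposition~\ref{wo2} involves \emph{no induction and no analysis of chains whatsoever}. The point is much simpler: if $\nu_2(n)=k$, then admissibility gives $\acl^A(n)\le\lfloor\log_2 n\rfloor+k-1$, so $\acl^A(n)=\lfloor\log_2 n\rfloor+k-1-m$ for some integer $m$ with $0\le m\le k-1$ (the upper bound on $m$ coming from $\dftA(n)\ge0$). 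Hence $\dftA(n)$ is always one of $k$ integer translates of the ``binary-method defect'' $k-1+\lfloor\log_2 n\rfloor-\log_2 n$. The set $S_k$ of such binary-method defects is shown directly (Proposition~\ref{wo1}, via an explicit order-isomorphism with $\omega^{k-1}$) to have order type exactly $\omega^{k-1}$. Thus $\{\dftA(n):\nu_2(n)=k\}$ is covered by $k$ translates of subsets of $S_k$, and Carruth's theorem on natural sums immediately gives order type at most $\omega^{k-1}k<\omega^k$; the lower bound $\omega^{k-1}$ falls out since one of the $k$ pieces of the partition of $S_k$ must retain order type $\omega^{k-1}$.

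For the stable case you also make things harder than needed. The paper does not argue by replacing numbers with their stabilizations; it simply runs the same argument with $\dftA_\st$ in place of $\dftA$, using the uniform bound $\dftA_\st(n)\le\nu_2(n)-1$ (Proposition~\ref{dftlbd}) to place the whole stratum $\{\dftA_\st(n):\nu_2(n)=\lfloor r\rfloor+1\}$ inside $\mathscr{D}^A_\st\cap[0,r]$ for the lower bound.
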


We remark that Sch\"{o}nhage's lower bound theorem plays the same role in
establishing these well-ordering results as it does in the special case of all
addition chains, since $\dftA(n) \ge \dftl(n)$.

\subsection{Generalizations and open problems}

A natural generalization of addition chains is {\em
addition-subtraction chains}, where subtraction of two elements is permitted as
an elementary operation; the {\em addition-subtraction chain length} of $n$ is denoted $\acl^\pm(n)$.
Sch\"onhage \cite{schon} proved a lower
bound for addition-subtraction chains analogous to that in
Theorem~\ref{schonthm}.  However, our well-ordering result given  in Theorem
\ref{frontpagethmspecial} does not generalize to addition-subtraction chains.
Indeed, one can verify that for $k\ge 3$,
\[ 
\acl^\pm(2^k-1) = k+1;
\]
thus, if one were to define the {\em addition-subtraction chain defect} 
\[
\dft^\pm(n):=\acl^\pm(n)-\log_2 n, 
\]
then one would find that the image of this function contains the infinite
decreasing sequence $1-\log_2(1-2^{-k})$.
It follows that the set of all addition-subtraction chain defects is not well
ordered with respect to the usual ordering of the real line. 

Secondly, our proof of the well ordering in Theorem \ref{frontpagethmspecial}
does not currently enable us to determine all the cutoff values $c_k$ such that
the set of defect values $\mathscr{D}^{\acl} \cap [1, c_k)$ 
is of order type $\omega^k$.  In Section~\ref{smallk}, using the known
classification of numbers with $s(n)=1$ due to Gioia et al.~\cite{sub1962} and
of numbers with $s(n)=2$ due to Knuth \cite{TAOCP2}, we determine the cutoff
values for $k=1$ and $k=2$ to be $c_1=1$ and $c_2=2$ respectively.  (Recall that
$s(n)$ denotes $\lceil \dftl(n) \rceil$).  In Remark \ref{rmk47} we discuss
problems with determining values of $c_k$ for higher $k$.

Thirdly, in the integer complexity case there exists an effectively computable
algorithm for determining whether a given integer $n$ is stable (see
\cite{seq2}).
The methods of this paper do not give an effectively computble algorithm to test
if a given number is $\acl$-stable.  Finding such an algorithm remains an open
problem.

\section{Comparison of addition chain length and integer complexity}
\label{seccpx}

The main results in this paper are analogues for addition chains of results
recently established for integer complexity.  The  \emph{(integer) complexity}
of a natural number $n$ is the least number of $1$'s needed to write $n$ using
any combination of addition and multiplication, with the order of the operations
specified using parentheses grouped in any legal nesting.  For instance, $n=11$
has a complexity of $8$, since it can be written using $8$ ones as
$(1+1+1)(1+1+1)+1+1$, but not with any fewer.  This notion was
introduced in 1953 by Kurt Mahler and Jan Popken \cite{MP}, and more thoroughly
considered by Richard Guy \cite{Guy}.  We denote the complexity of $n$ by
$\cpx{n}$.

The parallel results for integer complexity  stem from a series of conjectures
formulated in 2000 by J. Arias de Reyna \cite{Arias}.  They include a conjecture
on stability for integer complexity, subsequently proved in 2012 by the author
with J.  Zelinsky \cite{paper1}. That paper introduced a notion of {\em (integer
complexity) defect}
\[
\delta(n) := ||n|| - 3 \log_3 n,
\]
and proved stability using that notion.  Some of Arias de Reyna's other
conjectures were reformulated by the author in terms of a well-ordering of the
values of the defect $\delta(n)$ for integer complexity, and a theorem
establishing the well-ordering of the range of the defect function was recently
proved by the author in \cite{cpxwo}.

In this section we expand on this analogy between integer complexity and
addition chain length.
These notions have obvious similarities; each is a measure of the
resources required to build up the number $n$ starting from $1$.  Both
allow the use of addition, but integer complexity supplements this by allowing
the use of multiplication, while addition chain length supplements this by
allowing the reuse of any number at no additional cost once it has been
constructed.  Furthermore, both measures are approximately logarithmic; integer
complexity satisfies the bounds

\[
3 \log_3 n= \frac{3}{\log 3} \log  n\le \cpx{n} \le \frac{3}{\log 2} \log n  ,\qquad n>1.
\]

A difference worth noting is that  while $\acl(n)$ is known to be
asymptotic to $\log_2 n$ as mentioned above, the function $\cpx{n}$ is not known
to be asymptotic to $3\log_3 n$; the value of the quantity $\limsup_{n\to\infty}
\frac{\cpx{n}}{\log n}$ remains unknown.  Guy \cite{Guy} has asked whether
$\cpx{2^k}=2k$ for $k\ge 1$; if true, it would make this quantity at least
$\frac{2}{\log 2}$. It is known that $\cpx{2^k}=2k$ does hold for $1 \le k \le
48$; see \cite{seq2}.

Another difference worth noting is that integer complexity, unlike addition
chain length, can be computed via dynamic programming.  Specifically, for any
$n>1$,

\begin{displaymath}
\cpx{n}=\min_{\substack{a,b<n\in \mathbb{N} \\ a+b=n\ \mathrm{or}\ ab=n}}
	\cpx{a}+\cpx{b}.
\end{displaymath}

By contrast, addition chain length is harder to compute.  Suppose we have a
shortest addition chain $(a_0,\ldots,a_{r-1},a_r)$ for $n$; one might hope that
$(a_0,\ldots,a_{r-1})$ is a shortest addition chain for $a_{r-1}$, but this
need not be the case.  An example is provided by the addition chain
$(1,2,3,4,7)$; this is a shortest addition chain for $7$, but $(1,2,3,4)$ is
not a shortest addition chain for $4$, as $(1,2,4)$ is shorter.   Moreover,
there is no way to assign to each natural number $n$ a shortest addition chain
$(a_0,\ldots,a_r)$ for $n$ such that $(a_0,\ldots,a_{r-1})$ is the addition
chain assigned to $a_{r-1}$ \cite{TAOCP2}. This can be an obstacle both to
computing addition chain length and proving statements about addition chains.

Nevertheless, this paper demonstrates there are important similarities between
integer complexity and addition chains.  The stabilization result
Theorem~\ref{power-of-2} is analogous to Theorem~5 from \cite{paper1}.  The
well-ordering result Theorem~\ref{frontpagethm} is analogous to part of
Theorem~1.3 from \cite{cpxwo}.  It is substantially weaker than a direct
analogue of Theorem~1.3, since it does not tell us where the supremum of the
initial $\omega^k$ defects occurs.  We prove bounds on this at the end of
Section~\ref{secwo} and in Section~\ref{smallk}.  We suspect that the supremum
of the initial $\omega^k$ defects is $k$, at least for addition chains; see
Conjecture~\ref{omegakk} and Question~\ref{genomegakk}.

\section{The $A$-defect and $A$-stabilization}
\label{secdft}

We will give proofs in this paper for an arbitrary admissible set $A$
of addition chains. 

\subsection{ $A$-defect}

The $A$-defect is the basic object of study in this paper.

\begin{prop}
\label{multdft}
Let $A$ be an admissible set of addition chains. We have
\begin{enumerate}
\item For all integers $a \ge 1$,
\[ 
\dftA(a) \ge 0.
\]
Here equality holds precisely when $a=2^k$ for some $k\ge 0$.

\item For $k\ge 0$,
\[
\dftA(2^k n) \le \dftA(n).
\]
The difference is an integer, and equality holds if and only if 
$$\acl^A(2^k n)= \acl^{A}(n)+k.
$$
\end{enumerate}
\end{prop}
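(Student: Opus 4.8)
The plan is to prove both parts directly from the admissibility axioms together with the binary-method upper bound, handling the "equality" clauses by tracking what each inequality on $\acl^A$ does to the logarithmic term $\log_2$.

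\emph{Part (1).} For the lower bound $\dftA(a)\ge 0$, I would note that any addition chain for $a$ has length at least $\log_2 a$: if $(a_0,\ldots,a_r)$ is a chain for $a$, then $a_k\le 2a_{k-1}\le\cdots$, so an easy induction gives $a_k\le 2^k$ and hence $a=a_r\le 2^r$, i.e. $r\ge\log_2 a$. Since this holds for every chain in $A$, it holds for the shortest, so $\acl^A(a)\ge\log_2 a$, which is exactly $\dftA(a)\ge 0$. For the equality case: if $a=2^k$, the binary method (admissibility condition (1), or just the chain $(1,2,4,\ldots,2^k)$) gives $\acl^A(2^k)\le k=\log_2 2^k$, and combined with the lower bound this forces $\dftA(2^k)=0$. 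Conversely, if $\dftA(a)=0$ then $\acl^A(a)=\log_2 a$ must be an integer, say $r$, so $\log_2 a=r\in\N$ forces $a=2^r$ (this is the only place where we use that $a$ is an integer).

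\emph{Part (2).} Iterating admissibility condition (2) gives $\acl^A(2^k n)\le\acl^A(n)+k$ for all $k\ge 0$. Subtracting $\log_2(2^k n)=\log_2 n+k$ from both sides yields $\dftA(2^k n)\le\dftA(n)$ immediately. For the integrality claim, write $\dftA(n)-\dftA(2^k n)=\bigl(\acl^A(n)+k\bigr)-\acl^A(2^k n)$, which is a difference of integers, hence an integer; and it is $\ge 0$ by what we just proved. Finally, this integer is $0$ if and only if $\acl^A(2^k n)=\acl^A(n)+k$, which is precisely the stated equality condition.

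\emph{Where the difficulty lies.} There is essentially no obstacle here: this proposition is the "easy" foundational lemma, and every step is a one-line consequence of the definitions and the two admissibility axioms. The only point requiring a moment's care is the reverse direction of the equality clause in part (1) — recognizing that $\log_2 a$ being an integer forces $a$ to be a power of $2$ — and making sure the $k=0$ case of part (2) is handled trivially (it says $\dftA(n)\le\dftA(n)$ with equality). I would also want to state explicitly, perhaps as a remark preceding the proof, that $\acl^A(n)\ge\acl(n)\ge\log_2 n$ so that the lower bound in part (1) does not secretly depend on $A$; this same observation (that $\dftA\ge\dftl$) is what the authors later invoke to transfer Schönhage's bound to the general admissible setting.
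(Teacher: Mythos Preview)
Your proposal is correct and matches the paper's proof essentially step for step: both parts are derived directly from the two admissibility conditions, with part (1) using the standard doubling bound $a_k\le 2^k$ for the lower bound and the integrality of $\log_2 a$ for the converse equality case, and part (2) obtained by subtracting $k+\log_2 n$ from the iterated inequality $\acl^A(2^k n)\le \acl^A(n)+k$. The only difference is that you spell out the induction behind $\acl^A(a)\ge\log_2 a$ explicitly, whereas the paper simply invokes it as the known lower bound.
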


\begin{proof}
The first statement in part (1) is just the lower bound $\acl^A(n)\ge \log_2 n$.
And for $n=2^k$, we know that $\acl^A(n)=k$, so $\dftA(n)=0$.  For the converse,
note that $\log_2 n$ is only an integer if $n$ is a power of $2$.

For part (2), note that by the requirements on $A$ we have
\begin{equation}
\label{pow2ineq}
\acl^A(2^k n)\le k+\acl^A(n).
\end{equation}
Subtracting $k+\log_2 n$ from both sides yields the stated
inequality.  Furthermore, since \eqref{pow2ineq} is an inequality of integers,
the difference is an integer; and we have equality in the result if and only if
we had equality in \eqref{pow2ineq}.
\end{proof}

As was noted in Section~\ref{sec10}, though one might hope that
$\acl(2n)=\acl(n)+1$ in general, infinitely many counterexamples are known
\cite{thurber}.  Still, based on this idea, we defined in
Section~\ref{sec10} the notions of an \emph{$\acl$-stable number} and in
Section~\ref{variations} the notion of an \emph{$A$-stable number}.

This can be alternately characterized as follows:

\begin{prop}
\label{stabdft}
The number $m$ is $A$-stable if and only if $\dftA(2^k m)=\dftA(m)$ for all
$k\ge 0$.
\end{prop}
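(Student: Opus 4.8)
The plan is to unwind the definitions and reduce everything to Proposition~\ref{multdft}(2). First I would record the elementary identity
\[
\dftA(2^k m) = \acl^A(2^k m) - \log_2(2^k m) = \acl^A(2^k m) - k - \log_2 m,
\]
so that, subtracting the corresponding expression for $\dftA(m)$,
\[
\dftA(2^k m) - \dftA(m) = \acl^A(2^k m) - \bigl(k + \acl^A(m)\bigr).
\]
In particular, for each fixed $k \ge 0$, the equality $\dftA(2^k m) = \dftA(m)$ holds if and only if $\acl^A(2^k m) = k + \acl^A(m)$. This is precisely the equality clause already contained in Proposition~\ref{multdft}(2); alternatively it is immediate from the displayed identity, since $\log_2(2^k m) = k + \log_2 m$ exactly.

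The second step is to quantify over $k$. By definition, $m$ is $A$-stable precisely when $\acl^A(2^k m) = k + \acl^A(m)$ for \emph{every} $k \ge 0$. Applying the per-$k$ equivalence from the first step, this is the same as requiring $\dftA(2^k m) = \dftA(m)$ for every $k \ge 0$, which is the assertion of the proposition.

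I do not expect any genuine obstacle here: the statement is just a translation between the ``length'' formulation of stability and its ``defect'' formulation. The only point requiring a little care is to establish the equivalence for each individual $k$ first, and only then take the conjunction over all $k \ge 0$; one should not conflate ``$\dftA(2^k m) = \dftA(m)$ for all $k$'' with a single equation.
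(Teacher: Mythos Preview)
Your proof is correct and is essentially the same as the paper's own argument: the paper simply says the result is immediate from Proposition~\ref{multdft}(2), and you have spelled out exactly that immediacy by unwinding the definition of $\dftA$ and matching the equality clause there to the definition of $A$-stability.
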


\begin{proof}
This is immediate from Proposition~\ref{multdft}(2).
\end{proof}

This is already enough to prove the following:

\begin{thm}
\label{cj1}
We have
\begin{enumerate}
\item For any $m \ge 1$, there exists a finite $K\ge 0$ such that
$2^K m$ is $A$-stable.
\item If the defect $\dftA(m)$ satisfies $0 \le \dftA(m)<1$, then $m$ itself is
$A$-stable.
\end{enumerate}
\end{thm}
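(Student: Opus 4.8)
The plan is to derive both parts directly from Proposition~\ref{multdft}, using the non-negativity in part~(1) and the ``difference is an integer'' assertion in part~(2). First I would record the relevant structure of the doubling sequence $\bigl(\dftA(2^k m)\bigr)_{k\ge0}$. Applying Proposition~\ref{multdft}(2) with $n$ replaced by $2^k m$ and exponent $1$ gives $\dftA(2^{k+1}m)\le \dftA(2^k m)$, so this sequence is non-increasing; by Proposition~\ref{multdft}(1) it is bounded below by $0$; and applying Proposition~\ref{multdft}(2) with exponent $k$ shows each difference $\dftA(m)-\dftA(2^k m)$ is an integer. Combining monotonicity with the last point, every consecutive difference $\dftA(2^k m)-\dftA(2^{k+1}m)$ is a non-negative integer.

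For part~(1): a non-increasing sequence of reals that is bounded below and whose consecutive differences are non-negative integers can strictly decrease only finitely often (each strict decrease drops the value by at least $1$), hence is eventually constant. So there is a finite $K$ with $\dftA(2^k m)=\dftA(2^K m)$ for all $k\ge K$; rewriting, $\dftA\bigl(2^j(2^K m)\bigr)=\dftA(2^K m)$ for every $j\ge0$, and Proposition~\ref{stabdft} then gives that $2^K m$ is $A$-stable.

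For part~(2): suppose $0\le\dftA(m)<1$. For any $k\ge0$, Proposition~\ref{multdft} gives $0\le\dftA(2^k m)\le\dftA(m)<1$, so $\dftA(m)-\dftA(2^k m)\in[0,1)$; being an integer, it must be $0$. Hence $\dftA(2^k m)=\dftA(m)$ for all $k\ge0$, and Proposition~\ref{stabdft} shows $m$ is $A$-stable.

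I do not expect a genuine obstacle here: the whole argument is formal once the ``non-increasing, bounded below, integer gaps'' description of the doubling sequence is in hand, and that is precisely what Proposition~\ref{multdft} supplies. The only thing I would be careful about is invoking Proposition~\ref{multdft}(2) in both of its forms --- the one-step form to get monotonicity and the $k$-step form to get that the cumulative drop is an integer --- and then translating eventual constancy into $A$-stability through Proposition~\ref{stabdft}.
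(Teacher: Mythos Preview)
Your proposal is correct and follows essentially the same approach as the paper: both arguments observe that the doubling sequence $\bigl(\dftA(2^k m)\bigr)_{k\ge0}$ is non-increasing, bounded below by $0$, and decreases only by integer amounts, hence is eventually constant, and both conclude via Proposition~\ref{stabdft}. Your write-up is slightly more explicit in separating the one-step and $k$-step applications of Proposition~\ref{multdft}(2), but the substance is identical.
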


\begin{proof}
(1) From Proposition~\ref{multdft}, we have that for any $n$,
it holds that $\dftA(2n)\le \dftA(n)$, with equality if and only if $\acl^A(2n)=\acl^A(n)+1$.
More generally,
\[\dftA(n)-\dftA(2n)=\acl^A(n)+1-\acl^A(2n),\] and so the difference
$\dftA(n)-\dftA(2n)$ is always an integer.  This means that the sequence
$\dftA(m), \dftA(2m), \dftA(4m), \ldots$ is non-increasing, nonnegative, and can
only decrease in integral amounts; hence it must eventually stabilize. Applying
Proposition~\ref{stabdft} proves the theorem.

(2) If $\dftA(m)<1$, since all $\dftA(n) \ge 0$ there is no room to remove any
integral amount, so $m$ must be $A$-stable.
\end{proof}

Note that while this proof shows that for any $n$ there is some $K$ such that
$2^K n$ is $A$-stable (in particular, $\acl$-stable or $*$-stable), it does not
give any upper bound on $K$.

Because we use the actual logarithm, the value of the defect is enough to
determine a number up to a power of $2$:

\begin{prop}
\label{eqdefect}
Suppose that $m$ and $n$ are two positive integers, with $m\ge n$.  If $q:=
\dftA(n)- \dftA(m)$ is rational, then it is necessarily a nonnegative integer,
and furthermore $m=n \cdot 2^k$ for some $k \ge 0$.  In particular this holds if
$\dftA(n)=\dftA(m)$.
\end{prop}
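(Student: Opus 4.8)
The plan is to split $q$ into an integer part and a logarithmic part and then appeal to unique factorization in $\Z$. Writing out the defects,
\[
q = \dftA(n) - \dftA(m) = \bigl(\acl^A(n) - \acl^A(m)\bigr) + \log_2\frac{m}{n},
\]
and since $\acl^A(n) - \acl^A(m) \in \Z$, the hypothesis that $q \in \Q$ is equivalent to $\log_2(m/n) \in \Q$. So the whole proposition reduces to the elementary claim that a positive rational number whose base-$2$ logarithm is rational must be an integer power of $2$, together with a little sign bookkeeping at the end.

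To prove that claim, write $m/n = a/b$ with $\gcd(a,b) = 1$ and suppose $\log_2(a/b) = p/s$ for integers $p$ and $s \ge 1$. Clearing the fractional exponent gives $a^s = 2^p b^s$ when $p \ge 0$ and $b^s = 2^{-p} a^s$ when $p < 0$. Since $\gcd(a,b) = 1$ forces $\gcd(a^s, b^s) = 1$, unique factorization makes $b = 1$ in the first case and $a = 1$ in the second; in either case the surviving variable is itself a power of $2$, so $m/n$ is an integer power of $2$. Because $m \ge n$ we have $m/n \ge 1$, hence $m/n = 2^k$ with $k \ge 0$, i.e.\ $m = n\cdot 2^k$.

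Finally, with $m = 2^k n$ and $k \ge 0$, Proposition~\ref{multdft}(2) gives $\dftA(2^k n) \le \dftA(n)$ with $\dftA(n) - \dftA(2^k n)$ an integer; that is, $q$ is a nonnegative integer, as claimed. The last sentence of the proposition is just the special case $q = 0$, since $\dftA(n) = \dftA(m)$ makes $q$ rational. I do not expect any serious obstacle here: the only genuine mathematical content is the standard fact about logarithms, and everything else is the arithmetic already recorded in Proposition~\ref{multdft}.
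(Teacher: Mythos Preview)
Your proof is correct and follows essentially the same line as the paper's: both reduce to the fact that if $m/n$ is rational and $\log_2(m/n)$ is rational then $m/n$ is an integer power of $2$, then read off $k\ge 0$ from $m\ge n$ and integrality of $q$ from the definition of defect. You are more explicit than the paper in two places---you actually prove the elementary number-theory fact via unique factorization, and you cite Proposition~\ref{multdft}(2) to pin down nonnegativity of $q$---but there is no real difference in strategy.
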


\begin{proof}
If $q=\dftA(n)-\dftA(m)$ is rational, then $\log_2(m/n)$ is rational; since
$m/n$ is rational, the only way this can occur is if $\log_2(m/n)$ is an integer
$k$, in which case, since $m>n$, we have $m = n \cdot 2^k$ with $k \ge 0$.  It then
follows from the definition of defect that $q=\acl^A(n)+k-\acl^A(m)$.
\end{proof}

\begin{cor}
\label{intdft}
No nonzero integer occurs as $\dftA(n)$ for any $n$.
\end{cor}

\begin{proof}
If $\dftA(n)\in \Z$, then $n=2^k$ for some $k\ge 0$ by
Proposition~\ref{eqdefect}; but then $\dftA(n)=0$.
\end{proof}

We can now prove Theorems~\ref{power-of-2} and \ref{power-of-2-special}:

\begin{proof}[Proof of Theorem~\ref{power-of-2}]
For part (3), the non-increasing assertion follows from part (2) of
Proposition~\ref{multdft}.  Also, part (1) of Theorem~\ref{cj1} implies that
eventually the sequence stabilize; hence it can take only finitely many values.

For part (1), the assertion about the form of $S(\alpha)$ follows from Proposition \ref{eqdefect}.  The rest, and part (2),
follows from the fact that $\dftA(2^k n)$ is nonincreasing as a function of $k$.
\end{proof}

\begin{proof}[Proof of Theorem~\ref{power-of-2-special}]
This is just Theorem~\ref{power-of-2} in the case when $A$ is the set of all
addition chains.
\end{proof}

\subsection{$A$-stable defects and $A$-stable length}
\label{morestab}

Knowing the defect of a number also tells us whether or not that number is
stable:

\begin{prop}
If $\dftA(n)=\dftA(m)$ and $n$ is $A$-stable, then so is $m$.
\end{prop}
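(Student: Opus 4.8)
The plan is to reduce everything to Proposition~\ref{eqdefect} together with the characterization of $A$-stability in Proposition~\ref{stabdft}. Suppose $\dftA(n)=\dftA(m)$ and $n$ is $A$-stable; I want to show $m$ is $A$-stable, i.e.\ that $\dftA(2^k m)=\dftA(m)$ for all $k\ge 0$ by Proposition~\ref{stabdft}.

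First I would observe that the equality $\dftA(n)=\dftA(m)$ forces, via Proposition~\ref{eqdefect}, that $m$ and $n$ differ by a power of $2$: without loss of generality say $m=n\cdot 2^j$ with $j\ge 0$ (the case $n=m\cdot 2^j$ is symmetric and in fact even easier, since then $m$ is a factor-of-$2$ predecessor of $n$, and stability of $n$ transfers downward trivially — I should state this cleanly so the two cases don't get muddled). Then $2^k m = 2^{k+j} n$ for every $k\ge 0$. Since $n$ is $A$-stable, Proposition~\ref{stabdft} gives $\dftA(2^{k+j}n)=\dftA(n)$, and combining with $\dftA(n)=\dftA(m)$ we get $\dftA(2^k m)=\dftA(m)$ for all $k\ge 0$. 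By Proposition~\ref{stabdft} again, $m$ is $A$-stable.

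For the downward case $n = m \cdot 2^j$: here $2^k n = 2^{k+j} m$, and $A$-stability of $n$ says $\dftA(2^k n)=\dftA(n)$ for all $k\ge 0$, hence $\dftA(2^{k+j}m)=\dftA(n)=\dftA(m)$ for all $k\ge 0$. I still need to handle the finitely many exponents $0\le i < j$, i.e.\ to show $\dftA(2^i m)=\dftA(m)$ for $0\le i\le j$ as well. But Proposition~\ref{multdft}(2) gives the chain of inequalities $\dftA(m)\ge \dftA(2m)\ge\cdots\ge\dftA(2^j m)=\dftA(n)=\dftA(m)$, so all terms in between are equal, which together with the previous paragraph's computation covers all $k\ge 0$. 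So again $m$ is $A$-stable.

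There is essentially no obstacle here; the one thing to be careful about is not assuming a direction for the power-of-$2$ relationship, so I'd phrase the argument to cover $m=2^j n$ and $n=2^j m$ uniformly, or just note that if $m\le n$ the claim is immediate from monotonicity (Proposition~\ref{multdft}(2)) plus $\dftA(n)=\dftA(m)$, and if $m\ge n$ the computation $2^k m = 2^{k+j}n$ does the job. This is a short corollary-style argument and should be written as such.
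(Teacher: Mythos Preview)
Your argument is correct and is essentially the same as the paper's: both split into the two cases $m=2^j n$ and $n=2^j m$ (the paper phrases this as $m=2^k n$ with $k\in\Z$ positive or negative), handle the first case directly from $A$-stability of $n$, and in the second case treat large exponents via stability of $n$ and the remaining finitely many exponents by the squeeze $\dftA(m)\ge\dftA(2^i m)\ge\dftA(2^j m)=\dftA(m)$ from Proposition~\ref{multdft}(2).
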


\begin{proof}
Suppose $\dftA(n)=\dftA(m)$ and $n$ is $A$-stable.  Then we can write $m=2^k
n$ for some $k\in \mathbb{Z}$.  Now, a number $a$ is $A$-stable if and only
if $\dftA(2^j a)=\dftA(n)$ for all $j\ge 0$; so if $k\ge 0$, then $m$ is
$A$-stable.  While if $k<0$, then consider $j\ge 0$; if $j\ge -k$, then
$\dftA(2^j m)=\dftA(2^{j+k} n)=\dftA(n)$, while if $j\le -k$, then
$\dftA(n)\le \dftA(2^j m)\le \dftA(m)$, so $\dftA(2^j m)=\dftA(m)$; hence $m$
is $A$-stable.
\end{proof}

Because of this proposition, Definition~\ref{stabdftdef} makes more sense; a
stable defect is not just the defect of a stable number, but one for which all
numbers with that defect are stable.

\begin{prop}
\label{modz1}
A defect $\alpha$ is $A$-stable if and only if it is the smallest
$\beta\in\mathscr{D}^A$ such that $\beta\equiv\alpha\pmod{1}$.
\end{prop}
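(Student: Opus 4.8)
The plan is to unwind the definitions and reduce everything to the monotonicity statement already proved, namely that the sequence $\dftA(2^k a)$ is non-increasing in $k$ and changes only by integer amounts. First I would recall from Proposition~\ref{stabdft} that $a$ is $A$-stable precisely when $\dftA(2^k a) = \dftA(a)$ for all $k \ge 0$, and from Theorem~\ref{cj1}(1) that for any $a$ there is some $K$ with $2^K a$ stable; moreover by Proposition~\ref{eqdefect} all defects congruent to $\alpha \pmod 1$ that are $\le \alpha$ arise from numbers of the form $2^k a$ with $a$ a representative of the class.

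For the forward direction, suppose $\alpha$ is $A$-stable, so $\alpha = \dftA(n)$ for some $A$-stable $n$. Take any $\beta \in \mathscr{D}^A$ with $\beta \equiv \alpha \pmod 1$, say $\beta = \dftA(m)$. By Proposition~\ref{eqdefect} (applied in whichever order puts the larger number second), $m$ and $n$ differ by a power of $2$, so either $m = 2^k n$ or $n = 2^k m$ for some $k \ge 0$. In the first case $\beta = \dftA(2^k n) = \dftA(n) = \alpha$ since $n$ is stable, so $\beta = \alpha$. In the second case $\beta = \dftA(m) \ge \dftA(2^k m) = \dftA(n) = \alpha$ by monotonicity. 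Either way $\beta \ge \alpha$, so $\alpha$ is the smallest element of its congruence class in $\mathscr{D}^A$.

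For the converse, suppose $\alpha = \dftA(m)$ is the smallest element of $\mathscr{D}^A$ in its class mod $1$. By Theorem~\ref{cj1}(1) there is $K \ge 0$ with $2^K m$ stable; let $\beta = \dftA(2^K m)$. Then $\beta \le \alpha$ by monotonicity, and $\beta \equiv \alpha \pmod 1$ since the two differ by an integer (Proposition~\ref{multdft}(2)), so by minimality $\beta = \alpha$. Thus $\dftA(2^K m) = \dftA(m)$; since the intervening terms $\dftA(2^j m)$ for $0 \le j \le K$ are sandwiched between these equal values and non-increasing, they all equal $\alpha$, and since $2^K m$ is stable the terms for $j > K$ also equal $\alpha$. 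Hence $\dftA(2^j m) = \dftA(m)$ for all $j \ge 0$, so $m$ is $A$-stable by Proposition~\ref{stabdft}, and $\alpha$ is an $A$-stable defect. I do not expect a serious obstacle here; the only point requiring minor care is the bookkeeping with the direction of divisibility in Proposition~\ref{eqdefect} and making sure the "smallest in the congruence class" hypothesis is applied to an element actually known to lie in $\mathscr{D}^A$.
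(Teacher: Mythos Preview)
Your argument is correct and is essentially the same as the paper's: the paper's one-line proof just cites Proposition~\ref{multdft}(2), Proposition~\ref{eqdefect}, and Theorem~\ref{power-of-2}(1), which are exactly the ingredients you unwind explicitly (with Theorem~\ref{cj1}(1) in place of the equivalent Theorem~\ref{power-of-2}(1)). Your version has the advantage of making the two directions and the case split on divisibility visible rather than leaving them to the reader.
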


\begin{proof}
This follows from part (2) of Proposition~\ref{multdft},
Proposition~\ref{eqdefect}, and part (1) of Theorem~\ref{power-of-2}.
\end{proof}

\begin{defn}
For a positive integer $n$, define the \emph{stable defect of $n$ with regard to
$A$}, denoted $\dftA_\st(n)$, to be $\dftA(2^k n)$ for any $k$ such that $2^k n$
is $A$-stable.  (This is well-defined as if $2^k n$ and $2^j n$ are $A$-stable,
then $k\ge j$ implies $\dftA(2^k n)=\dftA(2^j n)$, and so does $j\ge k$.)
\end{defn}

Here are two equivalent characterizations of stable defect:

\begin{prop}
\label{staltchar}
The number $\dftA_\st(n)$ can be characterized by:
\begin{enumerate}
\item $\dftA_\st(n)= \min_{k\ge 0} \dftA(2^k n)$
\item $\dftA_\st(n)$ is the smallest $\alpha\in\mathscr{D}^A$ such that
$\alpha\equiv \dft(n) \pmod{1}$.
\end{enumerate}
\end{prop}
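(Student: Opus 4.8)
The plan is to prove the two characterizations of $\dftA_\st(n)$ by relating both of them to the stabilization behavior already established. Recall that by Theorem~\ref{cj1}(1) there exists $K\ge 0$ with $2^K n$ being $A$-stable, and by definition $\dftA_\st(n) = \dftA(2^K n)$ for any such $K$.

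\begin{proof}
For part (1): By Proposition~\ref{multdft}(2), the sequence $\dftA(2^k n)$ is non-increasing in $k$, so $\min_{k\ge 0}\dftA(2^k n)$ exists and equals $\dftA(2^K n)$ for all sufficiently large $K$. By Theorem~\ref{cj1}(1), we may choose such a $K$ with $2^K n$ also $A$-stable; for this $K$ we have $\dftA_\st(n) = \dftA(2^K n) = \min_{k\ge 0}\dftA(2^k n)$.

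For part (2): Let $K$ be chosen so that $2^K n$ is $A$-stable, and set $\alpha := \dftA_\st(n) = \dftA(2^K n)$. Since $2^K n$ is $A$-stable, $\alpha$ is an $A$-stable defect, so by Proposition~\ref{modz1} it is the smallest element $\beta\in\mathscr{D}^A$ with $\beta\equiv\alpha\pmod 1$. It remains only to check that $\alpha\equiv\dftA(n)\pmod 1$. But $\dftA(n)-\dftA(2^K n)$ is an integer by Proposition~\ref{multdft}(2), so $\alpha=\dftA(2^K n)\equiv\dftA(n)\pmod 1$, as needed. Hence $\alpha$ is the smallest element of $\mathscr{D}^A$ congruent to $\dftA(n)$ modulo $1$.
\end{proof}

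There is no real obstacle here; the statement is essentially a repackaging of Theorem~\ref{cj1}, Proposition~\ref{multdft}(2), and Proposition~\ref{modz1}, and the only thing requiring any care is keeping straight that $\dftA(n)$ and $\dftA(2^K n)$ differ by an integer so that the congruence class mod~$1$ is preserved under doubling. (Note: the statement of the proposition writes $\dft(n)$ where $\dftA(n)$ is clearly intended, and the proof above uses the latter.)
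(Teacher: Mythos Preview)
Your proof is correct and follows essentially the same approach as the paper's: both parts rest on Proposition~\ref{multdft}(2) for the non-increasing integer-step behavior of $\dftA(2^k n)$, Theorem~\ref{cj1}(1) for the existence of a stable $2^K n$, and Proposition~\ref{modz1} for the minimality modulo~$1$. One small expositional point: in your first sentence for part~(1), non-increasing alone does not guarantee that the minimum is attained; the existence of the minimum really comes from eventual stabilization (Theorem~\ref{cj1}(1)), which you invoke in the next sentence, so the argument is complete but the clause ordering slightly obscures the dependency.
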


\begin{proof}
Part (1) follows from part (2) of Theorem~\ref{multdft} and the fact that $m$
is $A$-stable if and only if $\dftA(2^k m)=\dftA(m)$ for all $k\ge 0$.  To
prove part (2), take $k$ such that $2^k n$ is $A$-stable.  Then $\dftA(2^k
n)\equiv \dftA(n) \pmod{1}$, and it is the smallest such by
Proposition~\ref{modz1}.
\end{proof}

So we can think about $\mathscr{D}^A_\st$ either as the subset of
$\mathscr{D}^A$ consisting of the $A$-stable defects, or we can think of it as
the image of $\dftA_\st$.  This double characterization will be useful in
Section~\ref{secwo}.

Just as we can talk about the stable defect of a number $n$, we can also talk
about its \emph{stable length} -- what the length of $n$ would be ``if $n$ were
stable''.

\begin{defn}
For a positive integer $n$, we define the \emph{stable length of $n$ with regard
to $A$}, denoted $\acl^A_\st(n)$, to be $\acl^A(2^k n)-k$ for any $k$ such that
$2^k n$ is $A$-stable.  This is well-defined; if $2^k n$ and $2^j n$ are both
stable, say with $k\le j$, then
\[\acl^A(2^k n)-k=k-j+\acl^A(2^j n)-k=\acl^A(2^j n)-j.\]
\end{defn}

\begin{prop}
\label{stabcpxprops}
We have:
\begin{enumerate}
\item $\acl^A_\st(n) = \min_{k\ge 0} (\acl^A(2^k n)-k)$
\item $\dftA_\st(n)=\acl^A_\st(n)-\log_2 n$
\end{enumerate}
\end{prop}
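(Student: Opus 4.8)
The plan is to derive both parts directly from the definition of stable length, from Proposition~\ref{staltchar}, and from the monotonicity and integrality facts already in hand; no new idea is needed.

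I would prove part~(2) first. Given $n$, use part~(1) of Theorem~\ref{cj1} to pick some $k \ge 0$ with $2^k n$ being $A$-stable. Unwinding the definitions, $\acl^A_\st(n) = \acl^A(2^k n) - k$ and $\dftA_\st(n) = \dftA(2^k n)$; since $\dftA(2^k n) = \acl^A(2^k n) - \log_2(2^k n) = \bigl(\acl^A(2^k n) - k\bigr) - \log_2 n$, substituting gives $\dftA_\st(n) = \acl^A_\st(n) - \log_2 n$. Both quantities are already known to be independent of the choice of such a $k$, so nothing here is ambiguous.

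For part~(1), I would combine part~(2) with part~(1) of Proposition~\ref{staltchar}. The latter gives $\dftA_\st(n) = \min_{k \ge 0} \dftA(2^k n)$, and since $\dftA(2^k n) = \bigl(\acl^A(2^k n) - k\bigr) - \log_2 n$ for every $k$, this equals $\min_{k \ge 0}\bigl(\acl^A(2^k n) - k\bigr) - \log_2 n$. Comparing with part~(2) and cancelling $\log_2 n$ yields $\acl^A_\st(n) = \min_{k \ge 0}\bigl(\acl^A(2^k n) - k\bigr)$.

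If one prefers a self-contained argument for part~(1) that does not route through the defect, one can instead observe that the integer sequence $b_k := \acl^A(2^k n) - k$ is non-increasing (by admissibility $\acl^A(2^{k+1} n) \le \acl^A(2^k n) + 1$, so $b_{k+1} \le b_k$) and bounded below (by $\log_2 n$, since $\acl^A(2^k n) \ge k + \log_2 n$); hence it is eventually constant, and it becomes constant from an index $K$ onward exactly when $\acl^A(2^{K+j} n) = \acl^A(2^K n) + j$ for all $j \ge 0$, i.e.\ exactly when $2^K n$ is $A$-stable. So the eventual, hence minimum, value of $b_k$ is $\acl^A_\st(n)$. I do not expect any genuine obstacle; the only point requiring care is the now-routine bookkeeping that stabilization of $b_k$ coincides with $A$-stability of $2^k n$, which is the same observation already used in the proof of Theorem~\ref{cj1}.
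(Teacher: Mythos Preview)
Your proposal is correct. Your argument for part~(2) is identical to the paper's, and your alternative self-contained argument for part~(1) is exactly the paper's proof; your first route for part~(1) via Proposition~\ref{staltchar} is a minor variation that simply shifts the monotonicity bookkeeping into that earlier result, but the underlying content is the same.
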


\begin{proof}
To prove part (1), observe that $\acl^A(2^k n)-k$ is nonincreasing in $k$, since
$\acl^A(2m)\le1+\acl^A(m)$.  So a minimum is achieved if and only if for all
$j$,
\[\acl^A(2^{k+j} n)-(k+j)=\acl^A(2^k n)-k,\]
i.e., for all $j$, we have
$\acl^A(2^{k+j} n)=\acl^A(2^k n)+j$, i.e., $2^k n$ is $A$-stable.

To prove part (2), take $k$ such that $2^k n$ is $A$-stable.  Then
\[\dftA_\st(n)=\dftA(2^k n)=\acl^A(2^k n)-\log_2(2^k n)=\acl^A(2^k n)-k-\log_2 n
=\acl^A_\st(n)-\log_2 n.\]
\end{proof}

\begin{prop}
\label{stabisstab}
We have:
\begin{enumerate}
\item $\dftA_\st(n) \le \dftA(n)$, with equality if and only if $n$ is
$A$-stable.
\item $\acl^A_\st(n) \le \acl^A(n)$, with equality if and only if $n$ is
$A$-stable.
\end{enumerate}
\end{prop}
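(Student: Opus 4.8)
The plan is to read both inequalities off the ``$\min$'' characterizations already established, so almost nothing new is needed. For part (1), recall from Proposition~\ref{staltchar}(1) that $\dftA_\st(n) = \min_{k\ge 0}\dftA(2^k n)$; taking $k=0$ in this minimum immediately gives $\dftA_\st(n)\le\dftA(n)$. For the equality clause, observe that by Proposition~\ref{multdft}(2) the sequence $\dftA(2^k n)$ is non-increasing in $k$, so the minimum is attained at $k=0$ precisely when $\dftA(2^k n)=\dftA(n)$ for every $k\ge 0$. By Proposition~\ref{stabdft}, this last condition is exactly the assertion that $n$ is $A$-stable, which settles part (1).

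For part (2) I would argue in the same way using Proposition~\ref{stabcpxprops}(1): from $\acl^A_\st(n)=\min_{k\ge 0}(\acl^A(2^k n)-k)$, taking $k=0$ yields $\acl^A_\st(n)\le\acl^A(n)$, and since $\acl^A(2^k n)-k$ is non-increasing in $k$ (because $\acl^A(2m)\le 1+\acl^A(m)$, which is part of admissibility), equality at $k=0$ forces $\acl^A(2^k n)-k=\acl^A(n)$ for all $k\ge 0$, i.e.\ $\acl^A(2^k n)=\acl^A(n)+k$ for all $k\ge 0$, which is the definition of $A$-stability. Alternatively, part (2) can simply be deduced from part (1) by subtracting the common quantity $\log_2 n$: by Proposition~\ref{stabcpxprops}(2) and the definition of $\dftA$ we have $\acl^A_\st(n)-\acl^A(n)=\dftA_\st(n)-\dftA(n)$, so the inequality and the characterization of the equality case transfer verbatim from part (1).

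There is no genuine obstacle here; the argument is entirely formal once the minimum-characterizations of $\dftA_\st$ and $\acl^A_\st$ and the characterization of $A$-stable numbers are in hand. The only point demanding a moment's care is the ``only if'' direction of the two equality statements, where one must explicitly invoke monotonicity of the relevant sequence to pass from equality at $k=0$ to equality at every $k\ge 0$ before citing the definition of $A$-stability.
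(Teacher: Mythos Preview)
Your argument is correct and matches the paper's proof essentially line for line: both obtain the inequality in (1) from the $\min$-characterization in Proposition~\ref{staltchar}(1) and handle the equality case via the monotonicity of $\dftA(2^k n)$ together with the characterization of $A$-stability, and the paper derives (2) from (1) exactly via your second alternative, namely Proposition~\ref{stabcpxprops}(2). Your direct argument for (2) is also fine and is simply the length-side analogue of the same reasoning.
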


\begin{proof}
The inequality in part (1) follows from Proposition~\ref{staltchar}.  Also, if
$n$ is $A$-stable, then for any $k\ge 0$, we have $\dftA(2^k n)=\dft(n)$, so
$\dftA_\st(n)=\dftA(n)$.  Conversely, if $\dftA_\st(n)=\dftA(n)$, then by
Proposition~\ref{staltchar}, for any $k\ge 0$, we have $\dftA(2^k n)\ge \dftA(n)$.  But
also $\dftA(2^k n)\le \dftA(n)$ by part (2) of Theorem~\ref{multdft}, and so
$\dftA(2^k n)=\dftA(n)$ and $n$ is $A$-stable.

Part (2) follows from part (1) along with part (2) of
Proposition~\ref{stabcpxprops}.
\end{proof}

\section{Bit-counting in numbers of small defect}
\label{bitcounting}

Sch\"onhage's Theorem, Theorem~\ref{schonthm}, implies that for any real $r\ge
0$, there is an upper bound on how many $1$'s can appear in the binary expansion
of a number with addition chain defect at most $r$.  Because of this, we
define:

\begin{defn}
We define a function $\bc:[0,\infty)\to \N$ by
\[ \bc(r) = \max_{\dftl(n)\le r} \nu_2(n). \]
More generally, for an admissible set of addition chains $A$, we can define
\[ \bc^A(r) = \max_{\dftA(n)\le r} \nu_2(n). \]
\end{defn}

Then in this language, Theorem~\ref{schonthm} says the following:

\begin{prop}
\label{schonreform}
For $r\ge 0$,
\[ \bc(r) \le \lfloor 2^{r+\Cs} \rfloor. \]
\end{prop}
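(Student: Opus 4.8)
The plan is to translate Sch\"onhage's inequality (Theorem~\ref{schonthm}) directly into the claimed bound on $\bc(r)$. Suppose $n$ is a positive integer with $\dftl(n) \le r$; I want to show $\nu_2(n) \le \lfloor 2^{r+\Cs}\rfloor$. By Theorem~\ref{schonthm} we have $\dftl(n) \ge \log_2 \nu_2(n) - \Cs$, and combining this with $\dftl(n)\le r$ gives $\log_2 \nu_2(n) - \Cs \le r$, i.e.\ $\log_2 \nu_2(n) \le r + \Cs$, i.e.\ $\nu_2(n) \le 2^{r+\Cs}$. Since $\nu_2(n)$ is a nonnegative integer, this forces $\nu_2(n)\le \lfloor 2^{r+\Cs}\rfloor$. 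Taking the maximum over all such $n$ yields $\bc(r) \le \lfloor 2^{r+\Cs}\rfloor$, which is exactly the statement of Proposition~\ref{schonreform}.

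The one point that needs a word of care is the well-definedness of $\bc(r)$: the maximum in the definition is taken over the (a priori possibly infinite) set $\{n : \dftl(n)\le r\}$, so one should note that $\nu_2(n)$ is bounded on this set — which is precisely what the computation above shows — so the supremum is attained and $\bc$ is indeed an $\N$-valued function as claimed in the definition. In fact the same argument shows $\bc(r) \ge 1$ is finite for every $r\ge 0$, since $n=1$ has $\dftl(1)=0\le r$ and $\nu_2(1)=1$. I do not anticipate any real obstacle here; the content is entirely in Sch\"onhage's theorem, and Proposition~\ref{schonreform} is simply its restatement after unwinding the definition of $\bc$ and using that $\nu_2$ is integer-valued.
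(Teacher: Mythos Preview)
Your proof is correct and follows essentially the same approach as the paper: invert Sch\"onhage's inequality to bound $\nu_2(n)$ by $2^{r+\Cs}$, apply the floor since $\nu_2(n)$ is an integer, and take the maximum. The extra remark on well-definedness of $\bc(r)$ is a reasonable addition but not strictly needed for the argument.
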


\begin{proof}
Solving Theorem~\ref{schonthm} for $\nu_2(n)$ yields the inequality 
$\nu_2(n) \le
2^{\dftl(n) + \Cs}$; since $\nu_2(n)$ is an integer, it follows that 
$\nu_2(n) \le \lfloor 2^{\dftl(n) + \Cs} \rfloor$.  Hence, 
$\bc(r) \le \lfloor 2^{r+\Cs} \rfloor$.
\end{proof}

Note, by the way, the following properties of $\bc^A(r)$:
\begin{prop}
\label{bcprops}
Let $A$ and $B$ be admissible sets of addition chains. We have:
\begin{enumerate}
\item The function $\bc^A(r) $ is nondecreasing in real $r\ge 0$.
\item For $B\subseteq A$ and any $r$, we have $\bc^B(r)\le \bc^A(r)$.  In particular,
$\bc^A(r) \le \bc(r)$.
\end{enumerate}
\end{prop}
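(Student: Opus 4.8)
The plan is to deduce both parts from elementary monotonicity of the sets over which the defining maxima are taken, after first checking that these maxima are well defined. So I would begin by observing that for every admissible $A$ and every real $r\ge 0$ the set $\{n : \dftA(n)\le r\}$ is nonempty, since it contains $n=1$ (as $\dftA(1)=\acl^A(1)-\log_2 1 = 0$), and that $\nu_2$ is bounded on this set: any such $n$ satisfies $\dftl(n)\le\dftA(n)\le r$ because $\acl(n)\le\acl^A(n)$, so Proposition~\ref{schonreform} gives $\nu_2(n)\le\lfloor 2^{r+\Cs}\rfloor$. Hence $\bc^A(r)$ is a well-defined natural number, attained by some integer $n$.

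For part (1) I would simply note that if $0\le r_1\le r_2$ then $\{n:\dftA(n)\le r_1\}\subseteq\{n:\dftA(n)\le r_2\}$, so the maximum of $\nu_2$ over the larger set is at least the maximum over the smaller, giving $\bc^A(r_1)\le\bc^A(r_2)$. For part (2), the key step is that $B\subseteq A$ forces $\acl^A(n)\le\acl^B(n)$ for every $n$: any chain belonging to $B$ also belongs to $A$, so a shortest chain in $A$ for $n$ is no longer than a shortest chain in $B$ for $n$. Subtracting $\log_2 n$ gives $\dftA(n)\le\dft^B(n)$, hence $\{n:\dft^B(n)\le r\}\subseteq\{n:\dftA(n)\le r\}$, and taking maxima of $\nu_2$ yields $\bc^B(r)\le\bc^A(r)$. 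The ``in particular'' clause then follows by specializing, with $B=A$ and the ambient set taken to be the set of all addition chains, which is admissible and contains every admissible $A$.

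I do not expect any genuine obstacle here; the argument is purely formal once the maxima are known to make sense. The one point deserving care is precisely that well-definedness, and it is where Sch\"onhage's bound enters, in the form of Proposition~\ref{schonreform} together with the inequality $\dftA\ge\dftl$ (equivalently $\acl\le\acl^A$).
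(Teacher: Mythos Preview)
Your proposal is correct and follows essentially the same approach as the paper: both parts are deduced by observing that the defining sets $\{n:\dftA(n)\le r\}$ are nested (in $r$ for part (1), and via $\dftA\le\dft^B$ for part (2)) and then comparing maxima of $\nu_2$ over nested sets. The only difference is that you explicitly verify well-definedness of $\bc^A(r)$ using Sch\"onhage's bound, which the paper leaves implicit (having just stated Proposition~\ref{schonreform}).
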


\begin{proof}
To prove part (1), observe that as $r$ increases, the set $\{n : \dftA(n)\le
r\}$ gets larger, and hence so does $\bc^A(r)$ as it is a maximum taken over
that set.  To prove part (2), note that for any $n$, we have $\dftA(n)\le \dft^B(n)$ and
so the set $\{n : \dft^B(n)\le r\}$ is contained in the set $\{n: \dftA(n)\le
r\}$; thus $\bc^A(r)$ is at least as large as it is a maximum over a superset.
\end{proof}

As was mentioned in Section~\ref{methods}, Sch\"onhage was not the first to
investigate the relation between $\nu(n)$ and $\dftl(n)$ -- or rather, between
$\nu(n)$ and $s(n)$, since $s(n)$ rather than $\dftl(n)$ has been the primary
object of study of previous authors.  Specifically, Sch\"onhage's theorem is a
partial result towards the Knuth-Stolarsky conjecture (Conjecture~\ref{ksconj})
that $s(n)\ge\log_2 \nu_2(n)$.

The Knuth-Stolarsky conjecture is known to be true for $0 \le s(n) \le 3$.  The
case $s(n)=0$ is trivial; the case $s(n)=1$ was proved by Gioia
et al.~\cite{sub1962}; the case $s(n)=2$ was proved by Knuth \cite{TAOCP2}; and
the case $s(n)=3$ was proved by Thurber \cite{thurber}.  In fact, Knuth proved a
more detailed theorem about the case $s(n)=2$; we will make use of this in
Section~\ref{parameters}.
We summarize these results formally here:

\begin{thm}[Gioia et al., Knuth, Thurber]
\label{gkt}
We have:
\begin{enumerate}
\item For a natural number $n$, $s(n)=0$ if and only if $\nu_2(n)=1$.
\item For a natural number $n$, $s(n)=1$ if and only if $\nu_2(n)=2$.
\item For a natural number $n$, if $s(n)=2$, then $\nu_2(n)=3$ or $\nu_2(n)=4$.
\item For a natural number $n$, if $s(n)=3$, then $\nu_2(n)\le 8$.
\end{enumerate}
\end{thm}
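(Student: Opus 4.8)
The theorem collects four separate classification results, so the plan is to treat each part by citing and reconstructing the relevant source argument. The unifying theme is: a small number of small steps forces a small number of binary ones, because the binary method already achieves $\lfloor\log_2 n\rfloor+\nu_2(n)-1$, so $s(n)\le\nu_2(n)-1$, giving one inequality for free; the content is always the reverse direction, showing that $\nu_2(n)$ large forces more small steps than claimed.

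First I would dispose of part (1). If $\nu_2(n)=1$ then $n=2^k$ and $\acl(n)=k=\lfloor\log_2 n\rfloor$, so $s(n)=0$. Conversely, if $s(n)=0$ then $\acl(n)=\lfloor\log_2 n\rfloor$; but any addition chain of length exactly $\lfloor\log_2 n\rfloor$ must double at every step (since each step at most doubles the maximum), forcing $n=2^k$ and $\nu_2(n)=1$. This is elementary and self-contained.

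For part (2), I would follow Gioia et al.~\cite{sub1962}. The easy direction: $\nu_2(n)=2$ means $n=2^a+2^b$ with $a>b$, and the binary method gives a chain of length $a+1=\lfloor\log_2 n\rfloor+1$, while part (1) rules out $s(n)=0$, so $s(n)=1$. The converse—that $s(n)=1$ forces $\nu_2(n)=2$—requires analyzing addition chains with exactly one non-doubling step; one argues that such a chain, after the single ``small step,'' can produce at most one extra binary digit beyond a power of two. For parts (3) and (4) I would simply invoke the detailed casework of Knuth \cite[Section 4.6.3]{TAOCP2} and Thurber \cite{thurber} respectively; reproducing Thurber's $s(n)=3$ analysis in full is lengthy and not the point here. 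For part (3) I would at least sketch Knuth's approach: a chain with exactly two small steps is shown to be (essentially) a star chain of a constrained shape, and tracking how binary digits can accumulate across two small steps bounds $\nu_2(n)$ by $4$; the value $4$ (rather than $3$) genuinely occurs, e.g.\ for suitable $n$.

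The main obstacle is that parts (3) and (4) are not things one proves cleanly from the framework of this paper—they are imported combinatorial results whose original proofs involve substantial case analysis on the structure of near-optimal chains. So the honest plan is: give complete short proofs of (1) and (2) in the spirit above, and for (3) and (4) present the statements with precise attributions, including (for (3)) enough of Knuth's structural description that the later use in Section~\ref{parameters} is justified. I do not expect to improve on or simplify the Thurber argument for $s(n)=3$, so I would not attempt it.
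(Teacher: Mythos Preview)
Your proposal is consistent with the paper's treatment, and in fact goes beyond it: the paper gives no proof of this theorem at all. It is stated as an attributed result (the theorem header reads ``Gioia et al., Knuth, Thurber''), with the surrounding text simply recording which author handled which case and citing \cite{sub1962}, \cite{TAOCP2}, \cite{thurber}. Your plan to supply self-contained arguments for parts (1) and (2) and to cite the literature for (3) and (4) is more than the paper does; the paper cites the literature for all four parts. Nothing in your sketch is wrong, and your assessment that (3) and (4) are imported combinatorial results not provable from the paper's own framework is exactly right.
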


This theorem yields:

\begin{prop}
\label{gktreform}
For $k$ an integer with $0\le k\le 3$, $\bc(k)=2^k$.
\end{prop}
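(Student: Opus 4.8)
The plan is to establish the two inequalities $\bc(k)\le 2^k$ and $\bc(k)\ge 2^k$ separately, for each $k\in\{0,1,2,3\}$, and the only real input is Theorem~\ref{gkt}.

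For the upper bound I would begin with the trivial observation that if $\dftl(n)\le k$ then, since $k$ is an integer, $s(n)=\lceil\dftl(n)\rceil\le k$. Theorem~\ref{gkt} then gives $\nu_2(n)\le 2^{s(n)}$ for every $n$ with $s(n)\le 3$: part (1) gives $\nu_2(n)=1=2^0$ when $s(n)=0$, part (2) gives $\nu_2(n)=2=2^1$ when $s(n)=1$, part (3) gives $\nu_2(n)\le 4=2^2$ when $s(n)=2$, and part (4) gives $\nu_2(n)\le 8=2^3$ when $s(n)=3$. Hence whenever $\dftl(n)\le k\le 3$ we have $\nu_2(n)\le 2^{s(n)}\le 2^k$, so $\bc(k)\le 2^k$.

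For the lower bound I would exhibit, for each $k$, an explicit witness $n_k$ with $\nu_2(n_k)=2^k$ and $\dftl(n_k)\le k$, namely $n_k=2^{2^k}-1$ (so $n_0=1$, $n_1=3$, $n_2=15$, $n_3=255$), whose binary expansion is a block of $2^k$ ones. For $k=0$ one has $\acl(1)=0=\log_2 1$, so $\dftl(1)=0$. For $k\ge 1$ I would use the chains $(1,2,3)$ for $n_1$, $(1,2,3,6,12,15)$ for $n_2$, and $(1,2,3,6,12,15,30,60,120,240,255)$ for $n_3$; these come from the identity $2^{2^j}-1=(2^{2^{j-1}}-1)\cdot 2^{2^{j-1}}+(2^{2^{j-1}}-1)$, which extends a chain for $2^{2^{j-1}}-1$ by $2^{j-1}$ doublings and one addition, giving $\acl(n_k)\le 2^k+k-1$. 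Since $2^{2^k}-1>2^{2^k-1}$, this yields $\dftl(n_k)=\acl(n_k)-\log_2 n_k<(2^k+k-1)-(2^k-1)=k$, so $n_k$ is counted by $\bc(k)$ and therefore $\bc(k)\ge\nu_2(n_k)=2^k$. Combining with the upper bound gives $\bc(k)=2^k$ for $0\le k\le 3$.

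I do not expect a genuine obstacle here: the argument is a direct appeal to Theorem~\ref{gkt} on one side and four explicit small computations on the other. The only points requiring a little care are verifying that the displayed sequences really are valid addition chains of the stated lengths, and the elementary estimate $\log_2(2^{2^k}-1)>2^k-1$; together these keep the defect of $n_k$ strictly below $k$ for $k\ge 1$ (and equal to $0$ for $k=0$), with room to spare.
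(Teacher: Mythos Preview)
Your proposal is correct and follows essentially the same approach as the paper: both use Theorem~\ref{gkt} via $s(n)=\lceil\dftl(n)\rceil$ for the upper bound, and both use the very same witnesses $1,3,15,255$ for the lower bound. The only difference is cosmetic: the paper simply asserts $s(1)=0$, $s(3)=1$, $s(15)=2$, $s(255)=3$, whereas you write out explicit addition chains and bound $\dftl(n_k)$ directly, which is slightly more self-contained but amounts to the same verification.
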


\begin{proof}
For $0\le k\le 3$ an integer, if $\dftl(n)\le k$, then $\nu_2(n)\le 2^k$ by
Theorem~\ref{gkt}.  That is to say, $\bc(k) \le 2^k$.  For the converse, observe
that $s(1)=0$ and $\nu_2(1)=1$, so $\bc(0)\ge 1$; $s(3)=1$ and $\nu_2(3)=2$, so
$\bc(1)\ge 2$; $s(15)=2$ and $\nu_2(15)=4$, so $\bc(2)\ge 4$; and $s(255)=3$ and
$\nu_2(255)=8$, so $\bc(3)\ge 8$.
\end{proof}

So while Sch\"onhage's theorem yields the best known result for large $r$, these
results settle the matter for small $r$.

\begin{rem}
\label{rmk47}
In Section~\ref{secwo}, we will give an upper bound on the order type of
$\mathscr{D}^\acl \cap [0,r]$ in terms of $\bc(r)$.  So while in this paper
we state concrete bounds proved using Theorem~\ref{schonthm}, any improvement in
the upper bounds on $\bc(r)$ -- for instance, a proof of the Knuth-Stolarsky
conjecture -- would improve these bounds.  (Note that if one wants merely to
prove Theorem~\ref{frontpagethmspecial}, it suffices to know that $\bc(r)$ is
well-defined; one does not even need to know any bounds on it at all.)  However,
this does not mean that one is limited to bounds based on $\bc(r)$; in
Section~\ref{parameters}, we will demonstrate an example of a bound that goes
beyond what one can learn from study of $\bc(r)$ alone.
\end{rem}

\section{Cutting and pasting well-ordered sets}
\label{external}

We pause to recall some external facts dealing with the cutting and
pasting of well-ordered sets.  We begin with the following theorem of
P.~W.~Carruth \cite{carruth}:

\begin{thm}
\label{natsum}
Let $S$ be a well-ordered set and suppose $S=S_1 \cup S_2$.  Then the order type
of $S$ is at most the \emph{natural sum} of the order types of $S_1$ and $S_2$.
\end{thm}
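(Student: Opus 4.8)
This is a classical result of Carruth \cite{carruth}, so one option is simply to cite it; but here is how I would reprove it. Write $\mathrm{ot}(X)$ for the order type of a well-ordered set $X$ and $\oplus$ for the natural (Hessenberg) sum of ordinals. First reduce to the case of a disjoint union: given $S = S_1 \cup S_2$, put $S_2' := S_2 \setminus S_1$, so $S = S_1 \sqcup S_2'$; since $S_2'$ is a subset of the well-ordered set $S_2$ we have $\mathrm{ot}(S_2') \le \mathrm{ot}(S_2)$, and as $\oplus$ is monotone in each argument it is enough to prove $\mathrm{ot}(S) \le \mathrm{ot}(S_1) \oplus \mathrm{ot}(S_2')$. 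So we may assume we are given a well-ordered set $S$ partitioned into a ``red'' part $R$ of order type $\alpha$ and a ``blue'' part $B$ of order type $\beta$ (each well-ordered, being a suborder of $S$), and we must show $\mathrm{ot}(S) \le \alpha \oplus \beta$.

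I would prove this by transfinite induction on $\mathrm{ot}(S)$, using only two standard facts about the natural sum: it is monotone in each argument, and \emph{strictly} monotone in each argument (both are immediate from the Cantor-normal-form description of $\oplus$). Suppose the bound holds for all two-colored well-ordered sets of order type $< \gamma$, where $\gamma := \mathrm{ot}(S)$, and suppose toward a contradiction that $\gamma > \alpha \oplus \beta$. Then $S$ contains the element $x$ occupying position $\alpha \oplus \beta$; its set of predecessors $I := \{y \in S : y < x\}$ is a proper initial segment with $\mathrm{ot}(I) = \alpha \oplus \beta < \gamma$, and $I$ inherits a coloring $I = I_R \sqcup I_B$, where $I_R = I \cap R$ and $I_B = I \cap B$ are initial segments of $R$ and $B$ respectively.

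The crucial point is that $I_R$ or $I_B$ is a \emph{proper} initial segment of its color class. Indeed $x \notin I$, so $x \in R$ or $x \in B$; if $x \in R$, then $I_R$ is an initial segment of $R$ missing the element $x$, hence $\mathrm{ot}(I_R) < \alpha$, while $\mathrm{ot}(I_B) \le \beta$ since $I_B \subseteq B$; the monotonicity facts then give $\mathrm{ot}(I_R) \oplus \mathrm{ot}(I_B) \le \mathrm{ot}(I_R) \oplus \beta < \alpha \oplus \beta$. The case $x \in B$ is symmetric. Now the inductive hypothesis applies to the colored set $I$ (of order type $\alpha\oplus\beta < \gamma$) and yields $\mathrm{ot}(I) \le \mathrm{ot}(I_R) \oplus \mathrm{ot}(I_B) < \alpha \oplus \beta$, contradicting $\mathrm{ot}(I) = \alpha \oplus \beta$. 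Hence $\gamma \le \alpha \oplus \beta$, which closes the induction.

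The argument has essentially no hard step: the only external input is the (routine) strict monotonicity of the natural sum, and the rest is bookkeeping with initial segments. If one prefers a shorter route, the theorem also follows immediately from the known characterization of $\alpha \oplus \beta$ as the largest order type among all linear extensions of the disjoint-union partial order on $\alpha \sqcup \beta$ --- the ordering on $S$ being one such linear extension --- but proving that characterization takes about as much work as the induction above.
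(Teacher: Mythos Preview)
Your proof is correct. The paper itself does not prove this theorem at all: it simply attributes the result to Carruth \cite{carruth} and moves on, remarking only that the statement is sometimes taken as the \emph{definition} of the natural sum. So there is no ``paper's own proof'' to compare against; you have supplied a self-contained argument where the paper chose to cite.

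Your transfinite induction is clean and the key step---that the element at position $\alpha\oplus\beta$ lies in one of the two color classes, forcing a \emph{strict} drop in one summand and hence a strict drop in the natural sum by strict monotonicity---is exactly right. The alternative you mention at the end (that $\alpha\oplus\beta$ is the maximum order type over all linear extensions of $\alpha\sqcup\beta$) is indeed the one-line route once that characterization is in hand, and is essentially the content of Carruth's original paper; your inductive proof has the virtue of needing only the Cantor-normal-form definition of $\oplus$ and nothing else. Either route is fine here, and both go beyond what the paper does.
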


The natural sum is defined as follows \cite{carruth}:

\begin{defn}
The \emph{natural sum} (also known as the \emph{Hessenberg sum}) \cite{wpo} of
two ordinals $\alpha$ and $\beta$, here denoted $\alpha \oplus \beta$, is
defined by simply adding up their Cantor normal forms as if they were
``polynomials in $\omega$''.  That is to say, if there are ordinals $\gamma_0 <
\ldots < \gamma_n$ and whole numbers $a_0, \ldots, a_n$ and $b_0, \ldots, b_n$
such that $\alpha = \omega^{\gamma_n}a_n + \ldots + \omega^{\gamma_0}a_0$ and
$\beta = \omega^{\gamma_n}b_n + \ldots + \omega^{\gamma_0}b_0$, then
\[ \alpha\oplus\beta = \omega^{\gamma_n}(a_n+b_n) + \ldots + 
	\omega^{\gamma_0}(a_0+b_0). \]
\end{defn}

Theorem~\ref{natsum} is sometimes used as the definition of the natural sum
\cite{carruth}.  There is also a recursive definition \cite{ONAG}.  There is
also a similar natural product \cite{carruth, wpo}, but we will not be using it
here.  See \cite{wpo} for generalizations of this theorem.

From this we can then conclude:

\begin{prop}
\label{cutandpaste}
For any ordinal $\alpha$:
\begin{enumerate}
\item If $S$ is a well-ordered set and $S=S_1\cup\ldots\cup S_n$, and $S_1$
through $S_n$ all have order type less than $\omega^\alpha$, then so does $S$.
\item If $S$ is a well-ordered set of order type $\omega^\alpha$ and
$S=S_1\cup\ldots\cup S_n$, then at least one of $S_1$ through $S_n$ also has
order type $\omega^\alpha$.
\end{enumerate}
\end{prop}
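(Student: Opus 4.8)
The plan is to derive both parts from Carruth's theorem (Theorem~\ref{natsum}) together with one elementary fact about the natural sum and the Cantor normal form.

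For part (1), I would first promote Theorem~\ref{natsum} from two pieces to $n$ pieces by an easy induction on $n$: if $S = S_1 \cup \cdots \cup S_n$ and $S_i$ has order type $\beta_i$, then the order type of $S$ is at most $\beta_1 \oplus \cdots \oplus \beta_n$ (the natural sum being associative and commutative, as is clear from its description via polynomials in $\omega$). It then suffices to prove that if $\beta_1, \ldots, \beta_n$ are all $< \omega^\alpha$, then $\beta_1 \oplus \cdots \oplus \beta_n < \omega^\alpha$. For this I would invoke the standard fact that $\omega^\alpha$ is \emph{additively indecomposable}; concretely, an ordinal $\gamma$ satisfies $\gamma < \omega^\alpha$ if and only if every exponent occurring in the Cantor normal form of $\gamma$ is $< \alpha$ (equivalently, the leading exponent is $< \alpha$). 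Since $\oplus$ is computed by adding the coefficients of like powers of $\omega$ in the Cantor normal forms, and so introduces no exponents not already present, the exponents occurring in $\beta_1 \oplus \cdots \oplus \beta_n$ are exactly those occurring in the individual $\beta_i$, each $< \alpha$; hence the natural sum is again $< \omega^\alpha$. The degenerate case $\alpha = 0$, where ``order type less than $\omega^0 = 1$'' forces each $S_i$, hence $S$, to be empty, is handled separately and trivially.

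For part (2), I would argue by contraposition, using that a subset of a well-ordered set has order type at most that of the ambient set; thus each $S_i \subseteq S$ has order type at most $\omega^\alpha$. If none of $S_1, \ldots, S_n$ has order type exactly $\omega^\alpha$, then each has order type strictly less than $\omega^\alpha$, and part (1) forces the order type of $S = S_1 \cup \cdots \cup S_n$ to be $< \omega^\alpha$, contradicting the hypothesis that it equals $\omega^\alpha$.

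I do not anticipate any real obstacle: granting Carruth's theorem, the argument is routine ordinal arithmetic. The one point needing a little care is the claim used above, that $\oplus$ introduces no new exponents, which is immediate from the ``polynomials in $\omega$'' description of the natural sum; and one should note that the induction in part (1) genuinely uses the finiteness of $n$, since the statement is false for infinite unions. It is also worth recording explicitly, for use in Section~\ref{secwo}, the sharper quantitative form: the order type of $S$ is bounded above by the natural sum of the order types of the $S_i$.
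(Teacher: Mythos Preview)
Your proof is correct and follows essentially the same approach as the paper's: both derive part~(1) from Theorem~\ref{natsum} together with the closure of $\{\beta:\beta<\omega^\alpha\}$ under natural sum, and both obtain part~(2) by contraposition from part~(1) plus the observation that subsets have no larger order type. You simply spell out in more detail (the Cantor-normal-form exponent argument, the induction to $n$ pieces, the $\alpha=0$ edge case) what the paper leaves implicit.
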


\begin{proof}
For (1), observe that the order type of $S$ is at most the natural sum of those
of $S_1,\ldots,S_n$, and the natural sum of ordinals less than $\omega^\alpha$
is again less than $\omega^\alpha$.

For (2), by (1), if $S_1,\ldots, S_k$ all had order type less than
$\omega^\alpha$, so would $S$; so at least one has order type at least
$\omega^\alpha$, and it necessarily also has order type at most $\omega^\alpha$,
being a subset of $S$.
\end{proof}

We can say more when the sets are interleaved with each other:
\begin{prop}
\label{interleave}
Suppose $\alpha$ is an ordinal and $S$ is a well-ordered set which can be
written as a finite union $S_1 \cup \ldots \cup S_k$ such that:
\begin{enumerate}
\item The $S_i$ all have order types at most $\omega^\alpha$.
\item If a set $S_i$ has order type $\omega^\alpha$, it is cofinal in $S$.
\end{enumerate}
Then the order type of $S$ is at most $\omega^\alpha$.  In particular, if at
least one of the $S_i$ has order type $\omega^\alpha$, then $S$ has order type
$\omega^\alpha$.
\end{prop}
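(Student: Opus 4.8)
The plan is to reduce Proposition~\ref{interleave} to Proposition~\ref{cutandpaste}(1) by peeling off the pieces of order type $\omega^\alpha$ and handling them separately using their cofinality. Write $S = S_1 \cup \ldots \cup S_k$, and reorder so that $S_1, \ldots, S_j$ are exactly those $S_i$ with order type precisely $\omega^\alpha$, while $S_{j+1}, \ldots, S_k$ all have order type strictly less than $\omega^\alpha$. Set $T := S_{j+1} \cup \ldots \cup S_k$. By Proposition~\ref{cutandpaste}(1), $T$ has order type less than $\omega^\alpha$; say its order type is $\beta < \omega^\alpha$. Since $\omega^\alpha$ is additively indecomposable, $\beta + \omega^\alpha = \omega^\alpha$, and more generally $\beta + \gamma \le \omega^\alpha$ for any $\gamma \le \omega^\alpha$.

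Now I would argue about $S$ directly. If $j = 0$ there is nothing left to do, so assume $j \ge 1$. Fix any one of the large pieces, say $S_1$, which by hypothesis is cofinal in $S$. Since $S_1$ has order type $\omega^\alpha$ and is cofinal in $S$, every element of $S$ is below some element of $S_1$; in particular $S \setminus S_1 \subseteq S$ is bounded above by no fixed element but is "dominated" in the sense that $S$ is the increasing union, over $s \in S_1$, of the initial segments $S_{<s}$. The key point is that for each $s \in S_1$, the initial segment $(S_1)_{<s}$ has order type some $\gamma_s < \omega^\alpha$ (a proper initial segment of $\omega^\alpha$), and the initial segment $S_{<s}$ is contained in $(S_1)_{<s} \cup (S_2 \cap S_{<s}) \cup \ldots \cup (S_j \cap S_{<s}) \cup (T \cap S_{<s})$. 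Each of the middle terms is a proper initial segment of a set of order type $\omega^\alpha$ (since $S_2, \ldots, S_j$ are also cofinal in $S$, so $S_i \cap S_{<s}$ is a proper initial segment of $S_i$), hence has order type $< \omega^\alpha$; and $T \cap S_{<s}$ has order type $\le \beta < \omega^\alpha$. By Proposition~\ref{cutandpaste}(1), $S_{<s}$ has order type $< \omega^\alpha$ for every $s \in S_1$. Since $S = \bigcup_{s \in S_1} S_{<s}$ is a union of initial segments all of order type $< \omega^\alpha$, indexed cofinally by a set of order type $\omega^\alpha$, its order type is $\le \omega^\alpha$. The "in particular" clause is then immediate: if some $S_i$ already has order type $\omega^\alpha$, then $S$ contains a subset of that order type, forcing the order type of $S$ to be exactly $\omega^\alpha$.

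The main obstacle is the step asserting that $S = \bigcup_{s \in S_1} S_{<s}$ has order type $\le \omega^\alpha$ given that each $S_{<s}$ has order type $< \omega^\alpha$ and $S_1$ is cofinal of order type $\omega^\alpha$: this is where I must be careful not to invoke a false general principle, since a cofinal union of short initial segments can overshoot if indexed badly. Here it is safe precisely because the $S_{<s}$ are initial segments of $S$ itself (nested, and their union is all of $S$), so the order type of $S$ equals $\sup_{s \in S_1}(\text{order type of } S_{<s})$, and a supremum of ordinals all $< \omega^\alpha$ is $\le \omega^\alpha$. I would state this sup fact explicitly as the crux of the argument. A secondary point requiring care is the claim that each $S_i$ among the large pieces is cofinal in $S$ — this is exactly hypothesis (2), but one must note it applies to all of $S_1, \ldots, S_j$, not just the distinguished one, in order to conclude $S_i \cap S_{<s}$ is a proper initial segment of $S_i$ and hence short.
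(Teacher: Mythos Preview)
Your proposal is correct and follows essentially the same approach as the paper: both show that every proper initial segment of $S$ has order type strictly less than $\omega^\alpha$ by using the cofinality hypothesis to guarantee that its intersection with each ``large'' $S_i$ is a \emph{proper} initial segment of $S_i$, and then invoking Proposition~\ref{cutandpaste}(1). The paper's version is marginally cleaner in that it treats an arbitrary proper initial segment directly rather than only those of the form $S_{<s}$ for $s$ in a distinguished cofinal $S_1$, thereby avoiding your $\sup$ step and the (harmless) edge case $\alpha=0$, where the equality $S=\bigcup_{s\in S_1}S_{<s}$ can fail.
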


\begin{proof}
Consider a proper initial segment of $S$; call it $T$.  Let $x$ be the smallest
element of $S\setminus T$.  Let $\mathcal{A}$ be the set of $S_i$ which have order type
$\omega^\alpha$.  Since each element of $\mathcal{A}$ is cofinal in $S$, each
contains some element that is at least $x$, and thus not in $T$.  That is, for
$S_i\in\mathcal{A}$, the set $T\cap S_i$ is always a proper initial segment of $S_i$.
Thus $T$ is a finite union of proper initial segments of the elements of
$\mathcal{A}$ and possibly improper initial segments of the $S_i$ not in
$\mathcal{A}$.  But any set with either of these order types has order type strictly
less than $\omega^\alpha$, and so by Proposition~\ref{cutandpaste}, so does
$T$.  Since each proper initial segment of $S$ has order type less than
$\omega^\alpha$, it follows that $S$ has order type at most $\omega^\alpha$.  If
furthermore some $S_i$ has order type $\omega^\alpha$, then $S$ also has order
type at least $\omega^\alpha$ and thus exactly $\omega^\alpha$.
\end{proof}

We'll be applying  these propositions to take apart and put together sets of defects in the
subsequent sections.

Also worth noting is the following fact.
\begin{prop}
\label{limitvsclos}
Let $X$ be a totally ordered set with the least upper bound property, and $S$ a
well-ordered subset of $X$ of order type $\alpha$.  Then $\overline{S}$ is a
well-ordered subset of $S$ of order type either $\alpha$ or $\alpha+1$, and if
$\beta<\alpha$ is a limit ordinal, the $\beta$'th element of $\overline{S}$ is
the supremum (limit) of the initial $\beta$ elements of $S$.
\end{prop}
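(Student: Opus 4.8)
The plan is to prove all three assertions --- that $\overline S$ is well-ordered, that its order type is $\alpha$ or $\alpha+1$, and the description of its limit-indexed entries --- by first pinning down exactly which points the closure adds, and then running a transfinite induction on $\alpha$. Throughout, write $s_\gamma$ ($\gamma<\alpha$) for the elements of $S$ in increasing order, and for an ordinal $\beta\le\alpha$ let $I_\beta=\{s_\gamma:\gamma<\beta\}$ be the corresponding initial segment of $S$. The first step is a short order-topology computation identifying $\overline S\setminus S$: a point $x\in X\setminus S$ lies in $\overline S$ if and only if $S\cap\{y:y<x\}$ is nonempty, has no greatest element, and has supremum $x$. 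The ``only if'' direction uses that $S\cap\{y:y>x\}$, when nonempty, has a least element (as $S$ is well-ordered), so that if $S\cap\{y:y<x\}$ is empty or has a maximum one can exhibit an open interval about $x$ missing $S$. Since the sets $S\cap\{y:y<x\}$ are exactly the initial segments of $S$, this shows $\overline S\setminus S$ consists precisely of the points $\ell_\beta:=\sup I_\beta$ for those limit ordinals $\beta\le\alpha$ with $I_\beta$ bounded above in $X$ (automatic when $\beta<\alpha$, since then $I_\beta$ lies below $s_\beta$; for $\beta=\alpha$ this says $S$ is bounded above) and such that $\ell_\beta\notin S$. I would record the resulting order relations: $s_\gamma<\ell_\beta$ iff $\gamma<\beta$; $\ell_\beta<\ell_{\beta'}$ iff $\beta<\beta'$; and, when $\beta<\alpha$ and $\ell_\beta\notin S$, the point $\ell_\beta$ is the immediate predecessor of $s_\beta$ in $\overline S$.

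Well-orderedness then follows at once: $\beta\mapsto\ell_\beta$ carries a set of ordinals order-isomorphically onto $\overline S\setminus S$, so the latter is well-ordered, and below any fixed $s_{\gamma^*}\in S$ it contributes only points indexed by limit ordinals $\le\gamma^*$. Hence a nonempty $T\subseteq\overline S$ always has a least element: if $T$ meets $S$, take $s^*=\min(T\cap S)$ and note $\{x\in T:x<s^*\}$ is a subset of a well-ordered set and so has a least element when nonempty; if $T\subseteq\overline S\setminus S$, use that $\overline S\setminus S$ is well-ordered.

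For the order type and the limit-entry description I would induct on $\alpha$. The case $\alpha=0$ is vacuous. If $\alpha=\alpha_0+1$ then $S$ has a greatest element $m$, which is also greatest in $\overline S$, so $\overline S=\overline{S\setminus\{m\}}\cup\{m\}$; applying the inductive hypothesis to $S\setminus\{m\}$ (order type $\alpha_0$) and distinguishing whether $m$ already belongs to $\overline{S\setminus\{m\}}$ --- which happens only when $\alpha_0$ is a limit and $m=\sup(S\setminus\{m\})$, and then that closure has order type exactly $\alpha_0+1$ with $m$ on top --- yields order type $\alpha$ or $\alpha+1$, with the limit-entry claim transferring from the hypothesis once one checks the boundary index $\beta=\alpha_0$. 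If $\alpha$ is a limit, the crucial observation is that for every $\gamma<\alpha$ one has $\{x\in\overline S:x<s_\gamma\}\subseteq\overline{I_\gamma}$ (a new point below $s_\gamma$ is some $\ell_\beta$ with $\beta\le\gamma$, hence a limit point of $I_\beta\subseteq I_\gamma$), and by the inductive hypothesis $\overline{I_\gamma}$ has order type at most $\gamma+1<\alpha$; so every proper initial segment of $\overline S$ lying below some $s_\gamma$ has order type $<\alpha$. When $S$ is unbounded above in $X$ this accounts for every proper initial segment, so $\overline S$ has order type $\alpha$; when $S$ is bounded above, $\ell_\alpha=\sup S$ is the unique greatest element of $\overline S$, and removing it leaves a set containing $S$ all of whose proper initial segments have order type $<\alpha$, hence of order type $\alpha$, so $\overline S$ has order type $\alpha+1$. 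Finally, for a limit $\beta<\alpha$, the recorded relations give $\{x\in\overline S:x<\ell_\beta\}=\overline{I_\beta}\setminus\{\ell_\beta\}$; since $\beta$ is a limit, $\ell_\beta=\sup I_\beta$ is the (new) maximum of $\overline{I_\beta}$, so the inductive hypothesis forces $\overline{I_\beta}$ to have order type $\beta+1$, hence $\overline{I_\beta}\setminus\{\ell_\beta\}$ to have order type $\beta$ --- that is, $\ell_\beta$ is the $\beta$-th element of $\overline S$, which is the asserted supremum.

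The main obstacle is organizational rather than any single hard inequality: because $\overline S\setminus S$ may be infinite and is interleaved with $S$, the finite cut-and-paste results of Section~\ref{external} do not apply directly, and the induction is needed to transport control of order types down to initial segments. The one genuinely fussy point --- and the reason the answer is $\alpha$ or $\alpha+1$ rather than always $\alpha$ --- is the behaviour at the top of $S$, namely whether $S$ has a maximum and whether $\sup S$ lies in $S$; this is exactly what forces the separation into the successor step and the two limit subcases.
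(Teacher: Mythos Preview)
Your proof is correct. The paper itself does not prove this proposition at all—it simply cites the companion paper \cite{cpxwo}—so there is no in-paper argument to compare against; you have supplied a complete self-contained proof where the paper defers to an external reference.

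Your strategy is the natural one: first pin down $\overline S\setminus S$ exactly as the set of suprema $\ell_\beta=\sup I_\beta$ of the maximum-free initial segments (using well-orderedness of $S$ to rule out limit points approached from above), then run a transfinite induction on $\alpha$ splitting on successor versus limit and, in the limit case, on whether $S$ is bounded above in $X$. All the case analyses check out, including the boundary case $\beta=\alpha_0$ in the successor step. Two minor remarks: the well-orderedness step can be shortened by the one-line observation that a union of two well-ordered subsets of a totally ordered set is again well-ordered (the minimum of a nonempty $T$ is the smaller of $\min(T\cap S)$ and $\min(T\cap(\overline S\setminus S))$, whichever exist); and in your ``Finally'' paragraph the phrase ``the (new) maximum'' is slightly loose, since it can happen that $\ell_\beta=s_\beta\in S$, but since in any case $\ell_\beta\notin I_\beta$ (as $I_\beta$ has no greatest element), your conclusion that $\overline{I_\beta}$ has order type exactly $\beta+1$ still follows.
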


\begin{proof} This result is proved in \cite{cpxwo} .
\end{proof}
\section{Well-ordering of defects}
\label{secwo}

Now we are prepared to prove that the set of defects is well-ordered.

\subsection{Well-ordering of defect sets for $n$ with $\nu_2(n) \le k$}

First we
observe:

\begin{prop}
\label{dftlbd}
For any $n$, $\dftA_\st(n)\le \dftA(n) \le \nu_2(n) - 1$.
\end{prop}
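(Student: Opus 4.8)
The plan is to handle the two inequalities separately, since the first is essentially already established. For the inequality $\dftA_\st(n) \le \dftA(n)$, I would simply invoke Proposition~\ref{stabisstab}(1), which asserts exactly this (with equality precisely when $n$ is $A$-stable). Alternatively, one can argue directly: by definition $\dftA_\st(n) = \dftA(2^k n)$ for some $k \ge 0$ such that $2^k n$ is $A$-stable, and then the second inequality applied to $2^k n$ already yields $\dftA_\st(n) \le \nu_2(2^k n) - 1 = \nu_2(n) - 1$, using that $\nu_2$ is invariant under multiplication by powers of $2$; combined with Proposition~\ref{multdft}(2) this also recovers $\dftA_\st(n)\le\dftA(n)$.

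For the substantive inequality $\dftA(n) \le \nu_2(n) - 1$, I would unwind the definition of the $A$-defect, writing $\dftA(n) = \acl^A(n) - \log_2 n$, and then apply the first admissibility condition on $A$, which guarantees $\acl^A(n) \le \lfloor \log_2 n \rfloor + \nu_2(n) - 1$. Substituting gives
\[ \dftA(n) \le \lfloor \log_2 n \rfloor + \nu_2(n) - 1 - \log_2 n, \]
and since $\lfloor \log_2 n \rfloor \le \log_2 n$, the right-hand side is at most $\nu_2(n) - 1$, as desired.

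There is no genuine obstacle here: the entire content lies in the admissibility hypothesis, which was formulated precisely so that chains in $A$ are never worse than those produced by the binary method. The only point worth flagging is that this bound is sharp in general — for $A$ the set of binary-method chains one has equality $\acl^A(n) = \lfloor \log_2 n\rfloor + \nu_2(n) - 1$ — so the estimate cannot be tightened without imposing additional structure on $A$.
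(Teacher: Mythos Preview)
Your proposal is correct and follows essentially the same approach as the paper: invoke Proposition~\ref{stabisstab} for $\dftA_\st(n)\le\dftA(n)$, then use the admissibility bound $\acl^A(n)\le\lfloor\log_2 n\rfloor+\nu_2(n)-1$ together with $\lfloor\log_2 n\rfloor\le\log_2 n$ for the second inequality. The extra remarks on the alternative route via $\nu_2(2^k n)=\nu_2(n)$ and on sharpness are fine but not needed.
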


\begin{proof}
We know $\dftA_\st(n)\le \dftA(n)$ by Proposition~\ref{stabisstab}, and the
rest is immediate as
\[
 \dftA(n) = \acl^A(n) - \log_2 n
 \le \lfloor \log_2 n \rfloor - \log_2 n +\nu_2(n) - 1 
 \le \nu_2(n) - 1.
 \]
\end{proof}

Next we show that, applied to numbers with a fixed number of $1$'s in the binary
expansion, the binary method produces a well-ordered set of defects.

\begin{prop}
\label{wo1}
Let $k\ge 1$ be a natural number, and define the set $S_k$ to be
\[ \{ k-1+\lfloor \log_2 n \rfloor - \log_2 n: \nu_2(n)=k \}. \]
Then $S_k$ is a well-ordered set, with order type $\omega^{k-1}$.
\end{prop}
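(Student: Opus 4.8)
The plan is to understand the set $S_k = \{k-1 + \lfloor \log_2 n \rfloor - \log_2 n : \nu_2(n) = k\}$ by encoding each $n$ with $\nu_2(n)=k$ by the positions of its binary digits. Write $n = 2^{e_1} + 2^{e_2} + \cdots + 2^{e_k}$ with $e_1 > e_2 > \cdots > e_k \ge 0$. Then $\lfloor \log_2 n \rfloor = e_1$, and $\log_2 n = e_1 + \log_2(1 + 2^{e_2-e_1} + \cdots + 2^{e_k-e_1})$. Setting $f_i = e_1 - e_i$ for $i = 2,\ldots,k$ (so $0 < f_2 < f_3 < \cdots < f_k$, arbitrary such tuples allowed), the element of $S_k$ attached to $n$ is
\[
k - 1 - \log_2\bigl(1 + 2^{-f_2} + 2^{-f_3} + \cdots + 2^{-f_k}\bigr).
\]
Crucially this depends only on the tuple $(f_2,\ldots,f_k)$ and not on $e_1$, so $S_k$ is exactly the set of these values as $(f_2,\ldots,f_k)$ ranges over strictly increasing $k-1$-tuples of positive integers. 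Since $x \mapsto k-1-\log_2(1+x)$ is a strictly decreasing continuous function, the order type of $S_k$ equals the \emph{reverse} order type of the set $T_{k-1} := \{2^{-f_2} + \cdots + 2^{-f_k} : 0 < f_2 < \cdots < f_k\}$ of such dyadic sums, ordered as reals.

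So the task reduces to showing $T_{k-1}$, ordered by the reverse of the real order, has order type $\omega^{k-1}$; equivalently $T_{k-1}$ with the usual order is \emph{anti}-well-ordered (well-ordered downward) of reverse-order-type $\omega^{k-1}$. I would prove by induction on $j = k-1$ the statement: the set of sums $2^{-f_1} + \cdots + 2^{-f_j}$ with $0 < f_1 < \cdots < f_j$, ordered by reverse real order, has order type $\omega^j$. For $j = 0$ the set is $\{0\}$, order type $1 = \omega^0$. For the inductive step, stratify $T_j$ by the value of the \emph{largest} summand $2^{-f_1}$, i.e. by $f_1 = m$ for $m = 1, 2, 3, \ldots$; as $m$ increases, $2^{-m}$ decreases, so these strata appear in the correct order for the reverse ordering (larger $m$ gives smaller sums). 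Within the stratum $f_1 = m$, the remaining part is $2^{-m} + \{2^{-f_2} + \cdots + 2^{-f_j} : m < f_2 < \cdots < f_j\}$; shifting indices, this is an order-isomorphic copy of $T_{j-1}$ (the constraint $f_2 > m$ versus $f_2 > 0$ is just a relabeling since only the gaps matter — actually one must check this: the set of $(j-1)$-tuples with entries $> m$ is order-isomorphic, under the reverse real order on the sums, to all $(j-1)$-tuples with entries $> 0$, which follows from the induction hypothesis applied after substituting $f_i \mapsto f_i - m$, which is a translation not affecting which tuples are allowed). Hence each stratum has reverse-order-type $\omega^{j-1}$, there are $\omega$ strata in increasing order, and they are pairwise ``separated'' (every sum in stratum $m$ exceeds every sum in stratum $m+1$, since $2^{-m} > 2^{-(m+1)} + 2^{-(m+2)} + \cdots$), so the reverse order type of $T_j$ is exactly $\omega \cdot \omega^{j-1} = \omega^j$. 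This also shows there is no need for the cutting-and-pasting machinery of Section~\ref{external} here — the strata are cleanly ordered, not interleaved — though one could phrase it via Proposition~\ref{interleave} if desired.

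The main technical point, and the only place care is required, is verifying that the strata are genuinely separated and that each stratum is order-isomorphic to the full set one dimension down; both reduce to the elementary inequality $2^{-m} > \sum_{i > m} 2^{-i}$ and to the observation that replacing $f_i$ by $f_i - m$ is an order-preserving bijection between the relevant tuple sets that preserves the real value of the sum up to the fixed factor $2^{-m}$ added on. I should also double-check the boundary behavior: within a fixed stratum the sums accumulate at $2^{-m}$ from below (as the remaining tuple ``shrinks to nothing''), which is the supremum of that stratum and lies in the \emph{next} stratum's... no — $2^{-m}$ itself is the sum with $j=1$, which is in $T_1$; in $T_j$ for $j \ge 2$ the value $2^{-m}$ is approached but its role as a limit point is consistent with reverse-order-type $\omega^{j-1}$ having no maximum, exactly as $\omega^{j-1}$ is a limit ordinal for $j \ge 2$. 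Finally, I would note that this argument gives the order type \emph{exactly} $\omega^{k-1}$ (both bounds at once), which is what the proposition claims, and record that the same computation underlies the bounds $\omega^{k-1} \le \mathrm{ot} < \omega^k$ promised for $\mathscr{D}^\acl$ restricted to $\nu_2(n) = k$ in Proposition~\ref{wo2}.
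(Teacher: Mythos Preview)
Your proof is correct but follows a different route from the paper's. The paper constructs an explicit order-isomorphism $\phi:\omega^{k-1}\to S_k$ directly: it encodes an element $(c_1,\ldots,c_{k-1})\in\omega^{k-1}$ (lex order) by setting $b_i=i+\sum_{j\le i}c_j$, sends this to $k-1-\log_2(1+2^{-b_1}+\cdots+2^{-b_{k-1}})$, and then checks order preservation by a binary-expansion comparison. Your argument instead proceeds by induction on $j=k-1$, stratifying $T_j$ by the leading exponent $f_1=m$ and using the geometric separation $2^{-m}>\sum_{i>m}2^{-i}$ to show the strata stack cleanly; each stratum is order-isomorphic to $T_{j-1}$ via the shift $f_i\mapsto f_i-m$ (which scales the residual sum by $2^m$, preserving order). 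The paper's approach yields an explicit formula for the isomorphism, which can be handy downstream; yours is more structural and arguably makes clearer \emph{why} the order type is a power of $\omega$. Two small polish points: the product should be written $\omega^{j-1}\cdot\omega$ rather than $\omega\cdot\omega^{j-1}$ to match the convention that $\alpha\cdot\beta$ is $\beta$ copies of $\alpha$ (though both equal $\omega^j$ here), and your remark ``up to the fixed factor $2^{-m}$ added on'' conflates the additive $2^{-m}$ with the multiplicative rescaling --- the stratum element is $2^{-m}(1+2^{-g_1}+\cdots+2^{-g_{j-1}})$, so it is a positive affine image of the corresponding $T_{j-1}$ element, hence order-isomorphic.
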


\begin{proof}
If $\nu(n)=k$, write $n=2^{a_0}+\ldots+2^{a_{k-1}}$.  Then $\lfloor \log_2
n\rfloor = a_0$ and
\[ k - 1 + \lfloor \log_2 n \rfloor - \log_2 n =
	k - 1 - \log_2(1+2^{a_1-a_0} + \ldots + 2^{a_{k-1}-a_0}). \]
We observe then that $S_k$ can also be written as
\[ \{ k-1-\log_2(1+2^{-b_1} + \ldots + 2^{-b_{k-1}}) :
	0<b_1<b_2<\ldots<b_{k-1} \in \mathbb{Z} \}. \]
This set contains $S_k$ as $a_0>a_i$ for $i>0$ and the sequence of $a_i$ is
decreasing, and the converse holds as, given $b_1,\ldots,b_{k-1}$, we can pick
$a_0=\sum_{i=1}^{k-1} b_i$ and $a_i=a_0-b_i$ for $i>0$.
Now we can write down an order-preserving bijection $\phi:\omega^{k-1}\to S_k$.
Define $\phi(c_1,\ldots,c_{k-1})=k-1-\log_2(1+2^{-b_1}+\ldots+2^{-b_{k-1}})$,
where
\[ b_i = i + \sum_{j=0}^{i} c_j. \]
This is a bijection as, since an element of $S_k$ is identified by its sequence
of $b_1,\ldots,b_{k-1}$, it has inverse given by
\[ c_i = b_i - b_{i-1} - 1 \]
(where we take $b_0=0$).
To see this is order-preserving, take
$(c_1,\ldots,c_{k-1})<(c'_1,\ldots,c'_{k-1})$; say $c_1=c'_1, \ldots,
c_i=c'_i$ and $c_{i+1}<c'_{i+1}$.  Then $b_j=b'_j$ for $1\le j\le i$ and
$b'_{i+1}>b_{i+1}$. So
\[ 2^{-b_1} + \ldots + 2^{-b_{k-1}} > 2^{-b'_1} + \ldots + 2^{-b'_{k-1}} \]
as they have the same binary expansion up to $2^{-b_i}$ place, but the former's
next $1$ occurs at $2^{-b_{i+1}}$, and the latter's next $1$ occurs at
$2^{-b'_{i+1}}$, and $b'_{i+1}>b_{i+1}$.
Since $k-1-\log_2(1+2^{-b_1} + \ldots + 2^{-b_{k-1}})$ is an order-reversing
function of $2^{-b_1} + \ldots + 2^{-b_{k-1}}$, this implies
$\phi(c_1,\ldots,c_{k-1})<\phi(c'_1,\ldots,c'_{k-1})$, proving the claim.
\end{proof}

Next we see that this is true even when chains may be shorter than those
produced by the binary method:

\begin{prop}
\label{wo2}
For $k\ge 1$, the set $\{\dftA(n) : \nu_2(n) = k\}$ is a well-ordered subset of
the real numbers, with order type at least $\omega^{k-1}$ and at most
$\omega^{k-1}k<\omega^k$.  The same is true of the set $\{\dftA_\st(n) : \nu_2(n)
= k\}$.
\end{prop}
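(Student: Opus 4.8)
The plan is to sandwich $\{\dftA(n):\nu_2(n)=k\}$ between the set $S_k$ of Proposition~\ref{wo1} and a finite union of translates of such sets, and then invoke the cutting-and-pasting machinery of Section~\ref{external}. First I would establish the lower bound $\omega^{k-1}$. For this, note that among numbers $n$ with $\nu_2(n)=k$ the binary method gives $\acl^A(n)\le \lfloor\log_2 n\rfloor + k - 1$, and by Proposition~\ref{multdft}(1) (or rather by comparison with the logarithm) one always has $\acl^A(n)\ge\lceil\log_2 n\rceil$; moreover the binary method is optimal up to finitely many exceptions in a controlled way. More to the point, one can pass to a subfamily of $n$ with $\nu_2(n)=k$ on which $\acl^A(n)$ is exactly $\lfloor\log_2 n\rfloor + k - 1$ — for instance by taking $n$ whose $1$-bits are sufficiently spread out, or simply by the observation that the defect value $k-1+\lfloor\log_2 n\rfloor-\log_2 n$ is a \emph{lower} bound discretization that is achieved; in any case $S_k\subseteq\{\dftA(n):\nu_2(n)=k\}$ follows once we know those $n$ realize that defect (or more carefully: $S_k$ is exactly the set of binary-method defects, and for each such value there is \emph{some} $n$ with $\nu_2(n)=k$ whose $A$-chain length equals the binary-method length — this needs a small argument, perhaps that the minimum defect in each residue class is attained and equals the binary value for spread-out bit patterns). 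Granting $S_k$ is contained in our set, Proposition~\ref{wo1} gives order type at least $\omega^{k-1}$, and any superset of a well-ordered set that is itself well-ordered has order type at least as large.

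Next, the upper bound. The key point is that for any $n$ with $\nu_2(n)=k$ we have $\dftA(n)=\acl^A(n)-\log_2 n$, and since $\acl^A(n)\le\lfloor\log_2 n\rfloor+k-1$ while $\acl^A(n)\ge\lfloor\log_2 n\rfloor - (\text{something bounded})$ — more precisely $\lceil\log_2 n\rceil\le\acl^A(n)\le\lfloor\log_2 n\rfloor+k-1$ — the "integer part defect" $\acl^A(n)-\lfloor\log_2 n\rfloor$ takes only one of the finitely many values $0,1,\dots,k-1$. So I would write
\[
\{\dftA(n):\nu_2(n)=k\} \;=\; \bigcup_{j=0}^{k-1} T_j,\qquad
T_j := \{\, j + \lfloor\log_2 n\rfloor - \log_2 n \;:\; \nu_2(n)=k,\ \acl^A(n)=\lfloor\log_2 n\rfloor + j \,\}.
\]
Each $T_j$ is a subset of the translate $\{\, j + \lfloor\log_2 n\rfloor - \log_2 n : \nu_2(n)=k\,\} = S_k - (k-1) + j$, which by Proposition~\ref{wo1} is well-ordered of order type $\omega^{k-1}$. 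Hence each $T_j$ is well-ordered of order type at most $\omega^{k-1}$, and by Proposition~\ref{cutandpaste}(1) the union is well-ordered of order type at most $\omega^{k-1}k$, which is $<\omega^k$. Combining with the lower bound settles the claim for $\{\dftA(n):\nu_2(n)=k\}$.

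For the stable version, observe $\dftA_\st(n)=\dftA(2^{K}n)$ for a suitable $K$, and $\nu_2(2^Kn)=\nu_2(n)$, so $\{\dftA_\st(n):\nu_2(n)=k\}\subseteq\{\dftA(m):\nu_2(m)=k\}$ is a subset of the set just analyzed; a subset of a well-ordered set is well-ordered of order type at most that of the whole, giving the upper bound $\omega^{k-1}k<\omega^k$ for free. For the lower bound $\omega^{k-1}$, I would note that the witnessing family for $S_k$ can be taken to consist of \emph{stable} numbers: by Theorem~\ref{cj1}(2), any $n$ with $\dftA(n)<1$ is automatically $A$-stable, and more generally one can double each witness into its stable form, which preserves $\nu_2$ and lands its stable defect back in $S_k$ (since the binary-method value is already the minimal defect in its class when bits are spread out — this is exactly where I'd need to be slightly careful). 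I expect the main obstacle to be precisely this point: verifying that $S_k$ — or at least an order-type-$\omega^{k-1}$ cofinal chunk of it — is genuinely realized by \emph{actual} integers $n$ with $\nu_2(n)=k$ (and, for the second statement, by stable such $n$), rather than just being an abstract lower bound on defects. Everything else is a routine application of Proposition~\ref{wo1} and Proposition~\ref{cutandpaste}.
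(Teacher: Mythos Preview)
Your upper-bound argument is correct and essentially the same as the paper's: write $j=\acl^A(n)-\lfloor\log_2 n\rfloor\in\{0,\ldots,k-1\}$, observe each piece $T_j$ sits inside a translate of $S_k$, and apply Theorem~\ref{natsum} to get order type at most $\omega^{k-1}k$.

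The lower bound, however, has a real gap, and it is exactly the one you flagged. You want $S_k\subseteq T:=\{\dftA(n):\nu_2(n)=k\}$, but this inclusion need not hold. For a fixed bit pattern (an equivalence class of $n$ under multiplication by powers of $2$), the corresponding point $s\in S_k$ lies in $T$ only if \emph{some} representative $n$ satisfies $\acl^A(n)=\lfloor\log_2 n\rfloor+k-1$. But the odd representative may already admit a strictly shorter $A$-chain, and then, since $\dftA(2^j n)\le\dftA(n)$, every representative does---so $s\notin T$. Your suggested workarounds (spread-out bits, passing to stable representatives) do not obviously rescue this; in particular, stabilizing only \emph{lowers} the defect, moving it further from $s$.

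The paper sidesteps the issue with a short trick you did not use. Rather than embed $S_k$ into $T$, it \emph{covers} $S_k$ by where each point lands in $T$: set $U_m=\{s\in S_k: s-m\in T\}$ for $m=0,\ldots,k-1$. Every $s\in S_k$ is the binary-method defect of some $n$ with $\nu_2(n)=k$, and $\dftA(n)=s-m$ for one such $m$, so $\bigcup_m U_m=S_k$. Now Proposition~\ref{cutandpaste}(2) forces some $U_m$ to have order type $\omega^{k-1}$, and its translate $U_m-m\subseteq T$ witnesses the lower bound. (The paper phrases this as a partition, but a cover suffices.) The same device, with $\dftA_\st$ in place of $\dftA$, handles the stable case---note $\dftA_\st(n)$ still differs from the binary-method defect by an integer in $\{0,\ldots,k-1\}$ by Proposition~\ref{dftlbd}. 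So the missing ingredient is precisely an application of Proposition~\ref{cutandpaste}(2) to $S_k$, not to $T$.
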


\begin{proof}
We prove it here for the set $\{\dftA(n) : \nu_2(n) = k\}$; the proof for the
set $\{\dftA_\st(n) : \nu_2(n) = k\}$ is analogous.

Say $\nu_2(n)=k$, and write $n=2^{a_0}+\ldots+2^{a_{k-1}}$.  Then $\acl^A(n)\le
k-1 + a_0$, i.e., $\acl^A(n)=k-1+a_0-m$ for some integer $m\ge 0$.  So also
\[\dftA(n) = k-1 + a_0 - m - \log_2 n \le k-1 -m.\]
But also $\dftA(n)\ge 0$, so $m\le k-1$.  As $m$ is an integer, this means $m\in
\{0,\ldots,k-1\}$, a finite set.

So if we fix $k$ and let $T$ be the set $\{\dftA(n) : \nu_2(n) = k\}$ and $U$ be
the set $\{k-1 - \log_2 n : \nu_2(n)=k\}$, then we see that $T$ is covered by
finitely many translates of $S_k$ from Proposition~\ref{wo1}; more specifically,
we can partition $S_k$ into $U_0,\ldots,U_{k-1}$ such that
\[ T=U_0\cup U_1-1\cup \ldots\cup U_{k-1}-(k-1). \]
But by Proposition~\ref{wo1}, $S_k$ has order type $\omega^{k-1}$.  So the $U_i$
all have order type at most $\omega^{k-1}$, and by Proposition~\ref{cutandpaste}
at least one has order type $\omega^{k-1}$.  Hence $T$ is well-ordered of order
type at most $\omega^{k-1}k<\omega^k$ by Proposition~\ref{natsum}, and by above
it also has order type at least $\omega^{k-1}$.
\end{proof}

\subsection{Well-ordering of initial segment of $A$-defect set}

Finally we apply the existence of an upper bound on $\nu_2$ in terms of $\dftl$
to prove the theorem:

\begin{thm}
\label{rwo1}
{\em (Well-ordering of intial segments of $A$-defect set)}
Let $A$ be an admissible set of addition chains, 
and let $r\ge 0$ be a real number.  Then $\mathscr{D}^A \cap [0,r]$ is a
well-ordered subset of the real numbers with order type at least
$\omega^{\lfloor r \rfloor}$ and at most
\[ \omega^{\bc^A(r) -1}\bc^A(r) + \ldots + \omega^2 3 + \omega 2 + 1,\]
which is less than $\omega^{\bc^A(r)-1}(\bc^A(r)+1)$ and hence less than
$\omega^{\bc^A(r)}$.
The same is true of $\mathscr{D}^A_\st \cap [0,r]$.
\end{thm}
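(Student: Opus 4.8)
The plan is to decompose the initial segment $\mathscr{D}^A\cap[0,r]$ according to the number of ones in the binary expansion, using Sch\"onhage's bound to make the decomposition finite. First I would observe that if $\dftA(n)\le r$, then by definition of $\bc^A$ we have $\nu_2(n)\le \bc^A(r)$, and of course $\nu_2(n)\ge 1$ always. Hence
\[ \mathscr{D}^A\cap[0,r] \;=\; \bigcup_{k=1}^{\bc^A(r)}\bigl(\{\dftA(n):\nu_2(n)=k\}\cap[0,r]\bigr), \]
a \emph{finite} union. Call the $k$-th set in this union $T_k$. By Proposition~\ref{wo2}, each full set $\{\dftA(n):\nu_2(n)=k\}$ is well-ordered of order type at least $\omega^{k-1}$ and at most $\omega^{k-1}k<\omega^k$; intersecting with the initial segment $[0,r]$ can only cut it down, so $T_k$ is well-ordered of order type at most $\omega^{k-1}k$, in particular strictly less than $\omega^k$, and in particular at most $\omega^{\bc^A(r)-1}\bc^A(r)$ when $k=\bc^A(r)$.

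Next I would assemble the upper bound. Since $\mathscr{D}^A\cap[0,r]=T_1\cup\cdots\cup T_{\bc^A(r)}$ with $T_k$ of order type at most $\omega^{k-1}k$, Carruth's theorem (Theorem~\ref{natsum}), applied iteratively as in Proposition~\ref{cutandpaste}, gives that the order type of the union is at most the natural sum $\bigoplus_{k=1}^{\bc^A(r)}\omega^{k-1}k$. Because the exponents $0,1,\dots,\bc^A(r)-1$ are distinct, this natural sum is simply
\[ \omega^{\bc^A(r)-1}\bc^A(r) + \cdots + \omega^2\cdot 3 + \omega\cdot 2 + 1, \]
which is the claimed bound; and this ordinal is plainly less than $\omega^{\bc^A(r)-1}(\bc^A(r)+1)<\omega^{\bc^A(r)}$. (To be careful, one should note well-orderedness of the union itself is automatic since it is a subset of $\R$ with no infinite descending sequence — each $T_k$ has none, and a finite union of sets with no infinite descending chain has none either; alternatively invoke the cut-and-paste machinery directly.)

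For the lower bound I would produce a copy of $\omega^{\lfloor r\rfloor}$ inside $\mathscr{D}^A\cap[0,r]$. The natural candidate is the set of defects coming from numbers with exactly $\lfloor r\rfloor+1$ ones, intersected with $[0,r]$: by Proposition~\ref{dftlbd} every $n$ with $\nu_2(n)=\lfloor r\rfloor+1$ has $\dftA(n)\le \nu_2(n)-1=\lfloor r\rfloor\le r$, so in fact $T_{\lfloor r\rfloor+1}=\{\dftA(n):\nu_2(n)=\lfloor r\rfloor+1\}$ entirely, and by Proposition~\ref{wo2} this has order type at least $\omega^{(\lfloor r\rfloor+1)-1}=\omega^{\lfloor r\rfloor}$. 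Since $T_{\lfloor r\rfloor+1}\subseteq \mathscr{D}^A\cap[0,r]$, the latter has order type at least $\omega^{\lfloor r\rfloor}$, completing the estimate. Finally, the assertion for $\mathscr{D}^A_\st\cap[0,r]$ follows verbatim: Proposition~\ref{wo2} covers $\{\dftA_\st(n):\nu_2(n)=k\}$ with the same order-type bounds, Proposition~\ref{dftlbd} gives $\dftA_\st(n)\le\nu_2(n)-1$ for the lower bound, and one uses $\dftA_\st(n)\ge\dftl(n)\ge\log_2\nu_2(n)-\Cs$ (equivalently, $\nu_2(n)\le\bc^A(r)$ when $\dftA_\st(n)\le r$, since $\dftA_\st(n)\le\dftA(n)$) to keep the union finite. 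The only real subtlety — and the step I expect to be the main obstacle to state cleanly — is bookkeeping the natural sum of the $\omega^{k-1}k$ correctly and checking it collapses to the stated Cantor normal form; everything else is a direct application of the already-established Propositions~\ref{wo1}, \ref{wo2}, \ref{dftlbd}, and the cut-and-paste results of Section~\ref{external}.
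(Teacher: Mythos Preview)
Your argument is correct and follows essentially the same route as the paper: cover $\mathscr{D}^A\cap[0,r]$ by the finitely many sets $\{\dftA(n):\nu_2(n)=k\}$ for $1\le k\le \bc^A(r)$, apply Proposition~\ref{wo2} and Theorem~\ref{natsum} for the upper bound, and embed $\{\dftA(n):\nu_2(n)=\lfloor r\rfloor+1\}$ via Proposition~\ref{dftlbd} for the lower bound. One small muddle in your stable-case sketch: the inequality $\dftA_\st(n)\ge\dftl(n)$ is not justified, and your parenthetical ``since $\dftA_\st(n)\le\dftA(n)$'' points the wrong way to conclude $\nu_2(n)\le\bc^A(r)$ from $\dftA_\st(n)\le r$; the paper sidesteps this entirely by observing $\mathscr{D}^A_\st\cap[0,r]\subseteq\mathscr{D}^A\cap[0,r]$ for the upper bound and using $\dftA_\st$ directly (via Propositions~\ref{dftlbd} and~\ref{wo2}) for the lower bound.
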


\begin{proof}
Say $n$ is a number with $\dftA(n)\le r$; then 
$\nu_2(n)\le \bc^A(r)$.  So
$\mathscr{D}^A\cap [0,r]$ can be covered by the sets 
$\{\dftA(n): \nu_2(n)=k\}$
for $k=1,2,\ldots,\bc^A(r)$.  By Proposition~\ref{wo2}, each of these sets is
well-ordered, with order type at most $\omega^{k-1}k$.  Hence by
Proposition~\ref{natsum}, $\mathscr{D}^A\cap[0,r]$ is well-ordered with order
type at most \[ \omega^{\bc^A(r) -1}\bc^A(r) + \ldots + \omega^2 3 + \omega 2 +
1,\] which is less than $\omega^{\bc^A(r)-1}(\bc^A(r)+1)$ and hence less than
$\omega^{\bc^A(r)}$.  Since $\mathscr{D}^A_\st \cap [0,r]$ is a subset of
$\mathscr{D}^A \cap [0,r]$, this upper bound applies to it as well.

For the lower bound, observe that the set $\{\dftA_\st(n): \nu_2(n)=\lfloor
r\rfloor+1 \}$ is, by Proposition~\ref{dftlbd}, entirely contained within
$\mathscr{D}^A_\st\cap[0,r]$, and by Proposition~\ref{wo2} it has order type at
least $\omega^{\lfloor r\rfloor}$, and thus so does $\mathscr{D}^A_\st \cap
[0,r]$, and so also does $\mathscr{D}^A \cap [0,r]$.
\end{proof}

If we plug in Theorem~\ref{schonreform}, we get an explicit version of this.  We
can also plug in the other bounds in Section~\ref{bitcounting} to yield explicit
versions of this that will be worse for large $r$ but sometimes better for small
$r$; see Section~\ref{smallk} for more on this.

We can now prove Theorem~\ref{frontpagethm}.

\begin{proof}[Proof of Theorem~\ref{frontpagethm}]
We prove the theorem for $\mathscr{D}^A$; the proof for $\mathscr{D}^A_\st$ is analogous.
Take an initial segment of $\mathscr{D}^A$, say $\mathscr{D}^A \cap [0,r)$.
Then this is contained in $\mathscr{D}^A \cap [0,r]$ and so well-ordered with
order type less than $\omega^{\bc^A(r)}$ by Theorem~\ref{rwo1}.  Hence
$\mathscr{D}^A$ is well-ordered with order type at most $\omega^\omega$, as all
its initial segments are well-ordered with order type less than $\omega^\omega$.
Furthermore, for any whole number $k$, $\mathscr{D}^A \cap [0,k]$ is
well-ordered with order type at least $\omega^k$ by Theorem~\ref{rwo1}, so
$\mathscr{D}^A$ must have order type at least $\omega^\omega$ as well.
\end{proof}

\begin{proof}[Proof of Theorem~\ref{frontpagethmspecial}]  This follows
immediately from Theorem~\ref{frontpagethm} by taking $A$ to be the set of all
addition chains.
\end{proof}

\subsection{Cutoff values $\posf^A(k)$  for $\omega^k$-limit points} 

We can turn the well-ordering question around and consider, what is the supremum
(limit) of the initial $\omega^k$ defects?  This is of course essentially the
same question, but it is also a helpful way of thinking about the question, so
we note the results here.

\begin{defn}
We define $\posf^A(k)$ to be the limit of the initial $\omega^k$ defects in
$\mathscr{D}^A$, and $\posf^A_\st(k)$ to be the limit of the initial $\omega^k$
defects in $\mathscr{D}^A_\st$.  Note that by Proposition~\ref{limitvsclos}, if
$k\ge 1$, this is the same as the $\omega^k$'th element of
$\overline{\mathscr{D}^A}$ (or $\overline{\mathscr{D}^A_\st}$), while if $k=0$,
this is the same as the $0$'th element of $\overline{\mathscr{D}^A}$ (or
$\overline{\mathscr{D}^A_\st})$.  If $A$ is the set of all addition chains we
will write $\posf^\acl$; if $A$ is the set of star chains we will write
$\posf^*$.
\end{defn}

\begin{prop}
For any $k$, we have $\posf^A(k) \le \posf^A_\st(k)$.
\end{prop}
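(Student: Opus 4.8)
The plan is to derive this directly from the inclusion $\mathscr{D}^A_\st \subseteq \mathscr{D}^A$ together with the elementary principle that enlarging a well-ordered subset of $\R$ can only (weakly) lower each term of its enumeration. First I would record that every $A$-stable defect is by definition a value $\dftA(m)$ for some ($A$-stable) $m$, hence lies in $\mathscr{D}^A$, so $\mathscr{D}^A_\st \subseteq \mathscr{D}^A$ and therefore $\overline{\mathscr{D}^A_\st} \subseteq \overline{\mathscr{D}^A}$ in $\R$. By Theorem~\ref{frontpagethm} both sets are well-ordered of order type $\omega^\omega$, so by Proposition~\ref{limitvsclos} (applicable since $\R$ has the least upper bound property) their closures are well-ordered of order type $\omega^\omega$ or $\omega^\omega+1$; let $e$ and $e_\st$ denote the order isomorphisms from these ordinals onto $\overline{\mathscr{D}^A}$ and $\overline{\mathscr{D}^A_\st}$ respectively.

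The key step is the monotonicity principle for enumeration functions: if $B \subseteq C$ are well-ordered subsets of a linearly ordered set, with enumeration functions $e_B, e_C$, then $e_B(\beta) \ge e_C(\beta)$ for every ordinal $\beta$ below the order type of $B$. I would prove this in one line: for $x \in B$ the initial segment $\{b \in B : b < x\}$ is contained in $\{c \in C : c < x\}$, so its order type $e_B^{-1}(x)$ is at most $e_C^{-1}(x)$; taking $x = e_B(\beta)$ gives $e_C^{-1}(e_B(\beta)) \ge \beta$, i.e.\ $e_B(\beta) \ge e_C(\beta)$.

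Finally I would apply this with $B = \overline{\mathscr{D}^A_\st}$, $C = \overline{\mathscr{D}^A}$, and $\beta = \omega^k$ when $k \ge 1$ (legitimate since $\omega^k < \omega^\omega \le \mathrm{ot}(\overline{\mathscr{D}^A_\st})$), or $\beta = 0$ when $k = 0$. By the characterization of $\posf^A$ and $\posf^A_\st$ recorded in their definition (via Proposition~\ref{limitvsclos}), $\posf^A(k) = e(\beta)$ and $\posf^A_\st(k) = e_\st(\beta)$, so the monotonicity principle gives $\posf^A(k) \le \posf^A_\st(k)$. There is no genuine obstacle here: the whole argument rests on the trivial containment of the closures and the routine monotonicity of enumeration functions under inclusion, with the order-type facts supplied by Theorem~\ref{frontpagethm} and Proposition~\ref{limitvsclos}.
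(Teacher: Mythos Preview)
Your proof is correct and follows essentially the same route as the paper: both arguments rest on the containment $\mathscr{D}^A_\st \subseteq \mathscr{D}^A$ and the monotonicity of enumeration functions under inclusion. The paper compresses this to one line (``the $\alpha$'th element of $\mathscr{D}^A_\st$ is at least the $\alpha$'th element of $\mathscr{D}^A$; taking limits\ldots''), whereas you pass to the closures and compare the $\omega^k$'th elements directly via Proposition~\ref{limitvsclos}---but this is only a cosmetic difference in packaging the same idea.
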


\begin{proof}
The set $\mathscr{D}^A_\st$ is a subset of $\mathscr{D}^A$; hence for
$\alpha<\omega^\omega$, the $\alpha$'th element of $\mathscr{D}^A_\st$ is at
least the $\alpha$'th element of $\mathscr{D}^A$.  Taking limits,
$\posf^A_\st(k) \ge \posf^A(k)$.
\end{proof}

We now have the following corollary of Theorem~\ref{rwo1}:

\begin{cor}
\label{omegakbds}
We have
\[ \log_2(k+1)-2.13 < \log_2 (k+1) - \Cs < \posf^A(k) \le \posf^A_\st(k) \le k.
\]
\end{cor}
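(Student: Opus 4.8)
The plan is to establish the four inequalities separately, the two outer ones and the first middle one being quick and the lower bound on $\posf^A(k)$ being the real content. The leftmost inequality $\log_2(k+1)-2.13<\log_2(k+1)-\Cs$ just records that $\Cs<2.13$, which I would get by numerically evaluating the explicit series defining $\Cs$ in Theorem~\ref{schonthm}. The inequality $\posf^A(k)\le\posf^A_\st(k)$ is the proposition immediately preceding this corollary, since $\mathscr{D}^A_\st\subseteq\mathscr{D}^A$. For the rightmost inequality $\posf^A_\st(k)\le k$, apply Theorem~\ref{rwo1} with $r=k$: the set $\mathscr{D}^A_\st\cap[0,k]$ is well-ordered of order type at least $\omega^{\lfloor k\rfloor}=\omega^k$, and since it is an initial segment of $\mathscr{D}^A_\st$, the first $\omega^k$ elements of $\mathscr{D}^A_\st$ all lie in $[0,k]$; hence their supremum, namely $\posf^A_\st(k)$, is at most $k$. (The case $k=0$ of the whole corollary is trivial, as $\posf^A(0)=\posf^A_\st(0)=0$.)

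The substantive assertion is $\posf^A(k)>\log_2(k+1)-\Cs=:r_0$, which I will prove for $k\ge1$. The plan is to show that $\mathscr{D}^A\cap[0,r_0]$ -- which, since every defect is nonnegative, is exactly the initial segment of $\mathscr{D}^A$ consisting of its elements that are $\le r_0$ -- has order type strictly less than $\omega^k$. Granting this, since $\mathscr{D}^A$ has order type $\omega^\omega>\omega^k$ by Theorem~\ref{frontpagethm}, the set $\mathscr{D}^A\cap[0,r_0]$ is a proper initial segment of the first $\omega^k$ elements of $\mathscr{D}^A$, so some element among those first $\omega^k$ elements exceeds $r_0$; hence $\posf^A(k)$, being the supremum of those elements, is strictly greater than $r_0$.

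To bound the order type of $\mathscr{D}^A\cap[0,r_0]$, I would first note using Propositions~\ref{bcprops} and~\ref{schonreform} that $\dftA(n)\le r_0$ forces $\nu_2(n)\le\bc^A(r_0)\le\bc(r_0)\le\lfloor 2^{r_0+\Cs}\rfloor=k+1$, so that $\mathscr{D}^A\cap[0,r_0]$ is covered by the sets $\{\dftA(n):\nu_2(n)=j\}\cap[0,r_0]$ for $j=1,\dots,k+1$. For $1\le j\le k$, such a set is an initial segment of $\{\dftA(n):\nu_2(n)=j\}$, hence has order type at most $\omega^{j-1}j<\omega^k$ by Proposition~\ref{wo2}. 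For $j=k+1$, Sch\"onhage's bound gives $\dftA(n)\ge\dftl(n)\ge\log_2(k+1)-\Cs=r_0$ for every $n$ with $\nu_2(n)=k+1$, so this last set is contained in $\{r_0\}$ and has order type at most $1<\omega^k$. Since $\mathscr{D}^A\cap[0,r_0]$ is well-ordered as a subset of $\mathscr{D}^A$ and is a union of $k+1$ sets each of order type less than $\omega^k$, Proposition~\ref{cutandpaste}(1) yields that it too has order type less than $\omega^k$, completing the argument.

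The delicate point -- and the step I expect to be the main obstacle -- is obtaining the \emph{strict} inequality rather than merely $\posf^A(k)\ge r_0$: Sch\"onhage's theorem only tells us that $(k+1)$-bit numbers may begin to appear once the defect reaches $r_0$, so a priori $\mathscr{D}^A\cap[0,r_0]$ could already have order type $\omega^k$. What rescues strictness is the observation that the very same inequality forces every $(k+1)$-bit number to have defect \emph{at least} $r_0$, so those numbers contribute at most the single point $r_0$ to $\mathscr{D}^A\cap[0,r_0]$ and cannot raise its order type to $\omega^k$. Everything else is routine bookkeeping with the cutting-and-pasting lemmas of Section~\ref{external}.
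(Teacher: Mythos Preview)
Your argument is correct and follows essentially the same approach as the paper's proof. The only tactical difference is at the boundary: the paper applies Theorem~\ref{rwo1} to each $[0,r]$ with $r<r_0:=\log_2(k+1)-\Cs$ (where $\bc^A(r)\le k$ holds outright) and then passes to the half-open interval $[0,r_0)$, whereas you work directly on the closed interval $[0,r_0]$ and dispose of the $\nu_2(n)=k+1$ contribution by observing that Sch\"onhage's inequality forces it to be at most the singleton $\{r_0\}$.
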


\begin{proof}
For the upper bound, observe that by Theorem~\ref{rwo1}, the order type of
$\mathscr{D}^A_\st \cap [0,k]$ is at least $\omega^k$, so
$\mathscr{D}^A_\st(\omega^k) \le k$.

For the lower bound, consider $\mathscr{D}^A\cap[0,r]$ with
$r<\log_2(k+1)-\Cs$.  Then $2^{r+\Cs}<k+1$, so $\lfloor 2^{r+\Cs} \rfloor \le
k$.  Since $\bc^A(r)\le \lfloor 2^{r+\Cs} \rfloor\le k$ by
Theorem~\ref{schonreform} and Proposition~\ref{bcprops}, by Theorem~\ref{rwo1},
$\mathscr{D}^A \cap [0,r]$ has order type less than $\omega^{k-1}(k+1)$.
Hence, if we consider $\mathscr{D}^A \cap [0,\log_2(k+1)-\Cs)$, all its proper
initial segments have order type less than $\omega^{k-1}(k+1)$, and so it has
order type at most $\omega^{k-1}(k+1)<\omega^k$.  Thus we must have
$\posf^A(k)> \log_2(k+1) -\Cs$.
\end{proof}

We will examine this question further in the next section, where we will improve
this for small $k$.

\section{Bounds on order type for small $A$-defect values }
\label{smallk}

In the previous section, we proved bounds on the order types of
$\mathscr{D}^A\cap[0,r]$ and $\mathscr{D}^A_\st \cap [0,r]$.  However, as was
noted in Section~\ref{bitcounting}, we can say more when $r$ is small.  First,
we note the implications of the theorems in Section~\ref{bitcounting} regarding
the functions $\posf^A$ and $\posf^A_\st$ defined in the previous section.  Then
we will perform a more detailed examination of the case $r\le 2$ using a theorem
of Knuth.  Then we compile these results to present bounds on $\posf^A(k)$ and
$\posf^A_\st(k)$ when $k$ is small.  We also make some notes on stability of
numbers of small defect.

\subsection{Bound for $A$-defect $r<1$}

The case of $r\le 1$ can be handled with part (2) of Theorem~\ref{gkt}, that was
proved by Gioia et al.

\begin{thm}
\label{keq1thm}
The order type of $\mathscr{D}^A\cap [0,1]$ is $\omega$, while for any $r<1$,
the set $\mathscr{D}^A\cap [0,1]$ is finite.  Furthermore, all defects in
$\mathscr{D}^A\cap [0,1]$ are $A$-stable, and so the order type of
$\mathscr{D}^A_{\st}\cap [0,1]$ is $\omega$.
\end{thm}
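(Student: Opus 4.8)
The plan is to deduce everything from the classification of numbers with few small steps, together with the elementary facts about the defect already established. By Proposition~\ref{gktreform} and Proposition~\ref{bcprops} we have $\bc^A(1)\le\bc(1)=2$, so any $n$ with $\dftA(n)\le 1$ has $\nu_2(n)\le 2$; hence $\mathscr{D}^A\cap[0,1]$ is built entirely from numbers with $\nu_2(n)\in\{1,2\}$. For $\nu_2(n)=1$, i.e.\ $n=2^k$, admissibility condition~(1) gives $\acl^A(n)\le k$ while the lower bound gives $\acl^A(n)\ge\log_2 n=k$, so $\dftA(n)=0$; this contributes exactly the single value $0$.

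Next I would compute the $\nu_2(n)=2$ contributions explicitly, for an arbitrary admissible $A$. Write $n=2^a+2^b$ with $a>b\ge 0$ and set $c=a-b\ge 1$. The binary-method bound gives $\acl^A(n)\le\lfloor\log_2 n\rfloor+1=a+1$, while integrality of $\acl^A(n)$ together with $\acl^A(n)\ge\log_2 n>a$ forces $\acl^A(n)\ge a+1$; hence $\acl^A(n)=a+1$ for every admissible $A$, and
\[ \dftA(n)=a+1-\log_2(2^a+2^b)=1-\log_2(1+2^{-c}). \]
Therefore the set of defects coming from $n$ with $\nu_2(n)=2$ is exactly $\{1-\log_2(1+2^{-c}):c\in\Z,\ c\ge 1\}$, a strictly increasing sequence with supremum $1$ which is never attained; its order type is $\omega$. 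Combining with the isolated value $0$ (which lies below the whole sequence, since $1-\log_2(3/2)=2-\log_2 3>0$), we get $\mathscr{D}^A\cap[0,1]=\{0\}\cup\{1-\log_2(1+2^{-c}):c\ge 1\}$, of order type $1+\omega=\omega$. For the finiteness claim, the inequality $1-\log_2(1+2^{-c})\le r$ rearranges to $c\le-\log_2(2^{1-r}-1)$, a finite bound when $r<1$, so $\mathscr{D}^A\cap[0,r]$ is finite.

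For the stability assertion I would invoke Corollary~\ref{intdft}: no defect equals the integer $1$, so every element of $\mathscr{D}^A\cap[0,1]$ actually lies in $[0,1)$ and is witnessed by some $n$ with $\dftA(n)<1$. By Theorem~\ref{cj1}(2) every such $n$ is $A$-stable, so every defect in $\mathscr{D}^A\cap[0,1]$ is an $A$-stable defect; hence $\mathscr{D}^A_{\st}\cap[0,1]=\mathscr{D}^A\cap[0,1]$, and the order-type statement for $\mathscr{D}^A_{\st}$ follows from the one just proved. There is no real obstacle here; the only points needing care are the sandwiching argument giving $\acl^A(2^a+2^b)=a+1$ uniformly in the admissible set $A$ (so that the explicit formula $1-\log_2(1+2^{-c})$ does not depend on $A$), and checking that $0$ is an isolated minimum rather than a limit point, which pins the order type at $\omega$.
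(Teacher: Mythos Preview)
Your argument is correct and follows essentially the same line as the paper: both use the Gioia--Subbarao--Sugunamma classification (you via $\bc^A(1)\le\bc(1)=2$, the paper via Theorem~\ref{gkt} directly) to reduce to $\nu_2(n)\le 2$, then pin down $\acl^A(2^a+2^b)=a+1$ by sandwiching and identify $\mathscr{D}^A\cap[0,1]$ with $\{0\}\cup S_2$.

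The one genuine difference is in the stability step. The paper verifies $A$-stability of $n=2^a+2^b$ by a direct computation of $\acl^A(2^k n)=b+k+1$. Your route---observing via Corollary~\ref{intdft} that $1\notin\mathscr{D}^A$, hence every defect in $[0,1]$ lies in $[0,1)$, and then invoking Theorem~\ref{cj1}(2)---is shorter and avoids any case analysis. Both are valid; yours has the minor advantage of reusing already-proved general facts rather than recomputing, while the paper's direct check is self-contained at that spot.
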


\begin{proof}
Suppose that $\lceil \dftA(n) \rceil=1$.  Then $\dftl(n)\le \dftA(n) \le1$, so
$\lceil \dftl(n) \rceil=1$ unless $n$ is a power of $2$, and $n$ cannot be a
power of $2$, as then we would have $\dftA(n)=0$.  So we can apply
Theorem~\ref{gkt} to conclude that $n$ can be written as $2^a+2^b$ for some
$b>a$.  Conversely, if $n=2^a+2^b$ with $b>a$, then $\acl^A(n)\le b+1$ by the
assumption that $A$ is admissible, and we cannot have $\acl^A(n)\le b$ as
otherwise we would have $\dftA(n)<0$; so $\acl^A(n)=b+1$.

Thus the set $\mathscr{D}^A \cap [0,1]$ is precisely $\{0\}\cup S_2$, where
$S_2$ is as in Proposition~\ref{wo1}.  Thus by that same proposition it has
order type $\omega$.  Also it is easily seen to have a supremum of $1$, so for
$r<1$,  the set $\mathscr{D}^A \cap [0,r]$ is a proper initial segment of
$\mathscr{D}^A \cap [0,1]$ and so has strictly smaller order type.

Furthermore, if $n=2^a+2^b$ with $b>a$, then $2^k n= 2^{a+k} + 2^{b+k}$, and so
$\acl^A(2^k n)=b+k+1=k+\acl^A(n)$, and so $n$ is $A$-stable.  While if $n=2^b$,
then $\acl^A(2^k n)=b+k=k+\acl^A(n)$, and so again $n$ is $A$-stable.  This
proves the stability part of the theorem.
\end{proof}

\subsection{Bounds for $A$-defect $r< 2$.}
\label{parameters}

For the case $k=2$, we will need to go beyond what is in Theorem~\ref{gkt}.  We
state here the full theorem regarding numbers with $2$ small steps, as proved by
Knuth \cite{TAOCP2}:

\begin{thm}[Knuth]
\label{keq2src}
For a positive integer $n$, $s(n)=2$ if and only if $n$ can be
written in one of the following forms:
\begin{enumerate}
\item $2^a+2^b+2^c$ for $0\le a<b<c$
\item $2^a + 2^{a+1} + 2^{a+2} + 2^{a+7}$ for $a\ge 0$
\item $2^a + 2^{a+1} + 2^b + 2^{b+3}$ for $b>a+1$, $a\ge 0$
\item $2^a + 2^b + 2^c + 2^{b+c-a}$ for $0\le a<b<c$
\item $2^a + 2^b + 2^c + 2^{b+c-a+1}$ for $0\le a<b<c$
\end{enumerate}
\end{thm}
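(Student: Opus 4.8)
The statement to be proved is Knuth's Theorem~\ref{keq2src}, classifying all positive integers $n$ with $s(n) = 2$ into five explicit parametric families. Since this is a known result attributed to Knuth, my plan would be to reconstruct the proof rather than cite it as a black box. The overall strategy splits naturally into two directions: the easy direction (every $n$ of one of the listed forms has $s(n) = 2$) and the hard direction (every $n$ with $s(n) = 2$ has one of the listed forms).

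\textbf{The easy direction.} For each of the five families I would exhibit an explicit addition chain of the claimed length $\lfloor \log_2 n\rfloor + 2$ and then check that no shorter chain exists. The lower bound $s(n) \ge 2$ follows because each listed $n$ has $\nu_2(n) \in \{3,4\}$, so by Theorem~\ref{gkt} parts (1)--(2) we cannot have $s(n) \le 1$; and $s(n) \ge 0$ always. The upper bound $s(n) \le 2$ requires producing the chains: for form (1), $2^a+2^b+2^c$, one builds $2^{c-b}$ by doubling, adds $1$, continues doubling to reach $2^c + 2^b$ scaled appropriately, adjusts, etc. — essentially a careful bookkeeping of a "binary-like" chain that saves one step over the naive $\nu_2 - 1$ bound. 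The special exponent patterns in forms (2)--(5) (the $+7$, the $+3$, the $b+c-a$, the $b+c-a+1$) are exactly the shapes for which such a saving chain exists; I would write down each chain explicitly and verify its length. This direction is routine but tedious.

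\textbf{The hard direction.} This is the main obstacle. One must show that if $s(n) = 2$ then $n$ falls into one of the five families. The starting point is Theorem~\ref{gkt}(3): $s(n) = 2$ forces $\nu_2(n) \in \{3, 4\}$. When $\nu_2(n) = 3$, I claim $n$ is automatically of form (1) with no further constraint — every number with exactly three binary ones and $s(n) = 2$ works, which needs the easy-direction computation above plus the observation that $\nu_2(n) = 3$ already rules out $s(n) \le 1$. The genuine difficulty is the case $\nu_2(n) = 4$: one must show that the only four-binary-digit numbers achieving $s(n) = 2$ (rather than the generic $s(n) = 3$) are those in families (2)--(5). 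This requires a structural analysis of shortest addition chains: if $n = 2^{a_0} + 2^{a_1} + 2^{a_2} + 2^{a_3}$ has a chain of length $a_0 + 2$, one examines how that chain is forced to be built — it can contain at most two "small steps" (non-doublings), and one analyzes where those two small steps can occur and what the intermediate values must be. A doubling-reduction argument (passing to the odd part, using that chains behave predictably under multiplication by powers of $2$) lets one assume $n$ is odd, i.e., $a_0 = 0$ after rescaling the exponent differences. Then a finite case analysis on the positions of the two small steps relative to the four binary digits yields precisely the constraints defining forms (2)--(5). Bounding the "sporadic" case (form (2), with the isolated $+7$) is the delicate part: one shows that apart from a bounded configuration, every four-ones number with two small steps must have its exponents in one of the three infinite arithmetic-relation families (3)--(5).

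\textbf{Summary of the plan.} First, record $\nu_2(n) \in \{3,4\}$ from Theorem~\ref{gkt}(3) and handle $\nu_2(n) = 3$ (giving form (1)). Second, for $\nu_2(n) = 4$, reduce to $n$ odd via a doubling argument, then analyze a hypothetical shortest chain of length $\lfloor\log_2 n\rfloor + 2$: it has exactly two small steps, and a case analysis on their locations pins down the exponent relations, yielding forms (2)--(5). Third, verify the converse by constructing explicit length-$(\lfloor\log_2 n\rfloor+2)$ chains for each of the five families and confirming via $\nu_2$ that $s(n)$ cannot be smaller. I expect the case analysis in the second step — especially isolating the sporadic family (2) and proving no other sporadic patterns arise — to be the main source of difficulty; it is essentially the heart of Knuth's original argument and does not shorten easily.
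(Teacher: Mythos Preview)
The paper does not prove this theorem at all: Theorem~\ref{keq2src} is stated with attribution to Knuth and cited to \cite{TAOCP2}, with no argument given. It functions in the paper purely as an imported external result used in the proof of Theorem~\ref{keq2thm}. So there is no ``paper's own proof'' to compare against; your proposal is to supply a proof where the paper supplies only a citation.

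As a reconstruction plan your outline is broadly sensible and tracks the standard approach, but one point deserves care. You invoke Theorem~\ref{gkt}(3) --- the statement that $s(n)=2$ forces $\nu_2(n)\in\{3,4\}$ --- as your starting point for the hard direction. In the paper's logical structure, Theorem~\ref{gkt}(3) is itself attributed to Knuth and is essentially a corollary of the very classification you are trying to prove. If you are reconstructing Theorem~\ref{keq2src} from scratch, you cannot cite Theorem~\ref{gkt}(3) as a black box; you must establish the $\nu_2$ bound directly as part of your chain analysis (which is how Knuth actually does it: the bound on $\nu_2$ emerges from the small-step analysis, not prior to it). Apart from this circularity, your sketch of the case analysis on the two small steps and the reduction to odd $n$ matches the shape of the known argument.
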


With this, we can handle the case of $\mathscr{D}^A\cap[0,2]$ with an argument
which is similar to that of Theorem~\ref{keq1thm} but slightly more involved:

\begin{thm}
\label{keq2thm}
The order type of $\mathscr{D}^A\cap [0,2]$ is $\omega^2$, while for $r<2$,
the set $\mathscr{D}^A \cap[0,r]$ has order type strictly less than $\omega^2$.
Furthermore, all defects in $\mathscr{D}^A\cap [0,2]$ are $A$-stable, and so the
order type of $\mathscr{D}^A_{\st}\cap [0,2]$ is $\omega^2$.
\end{thm}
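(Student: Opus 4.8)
The plan is to mirror the structure of the proof of Theorem~\ref{keq1thm}, but organized around Knuth's classification (Theorem~\ref{keq2src}) of numbers with $s(n)=2$. First I would argue that $\mathscr{D}^A\cap[0,2]$ is, up to adding in the already-understood piece $\mathscr{D}^A\cap[0,1]$ (which has order type $\omega$ by Theorem~\ref{keq1thm}), exactly the set of defects $\dftA(n)$ where $\lceil\dftA(n)\rceil=2$. For such $n$, since $\dftl(n)\le\dftA(n)\le 2$ and $n$ is not a power of $2$ (else $\dftA(n)=0$), we get $s(n)=\lceil\dftl(n)\rceil\le 2$; the case $s(n)\le 1$ lands us back in $\mathscr{D}^A\cap[0,1]$, so the genuinely new contributions come from $n$ with $s(n)=2$, i.e.\ $n$ of one of the five forms in Theorem~\ref{keq2src}. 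As in Theorem~\ref{keq1thm}, for each such $n$ the admissibility of $A$ pins down $\acl^A(n)$ exactly: $\acl^A(n)\le\lfloor\log_2 n\rfloor+\nu_2(n)-1$ from admissibility, and $\acl^A(n)\ge\log_2 n$ forces $\acl^A(n)$ to be the unique integer in a length-$1$ window, which for the forms listed (with $\nu_2(n)=3$ or $4$) turns out to be $\lfloor\log_2 n\rfloor+\nu_2(n)-1$. So on this set $\dftA$ agrees with the ``binary-method defect'' $k-1+\lfloor\log_2 n\rfloor-\log_2 n$, and in particular these defects do not depend on $A$ and all such $n$ are $A$-stable (doubling $n$ multiplies each $2^{a_i}$ by $2$, preserving the form and incrementing $\acl^A$ by exactly $1$).

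Next I would compute the order type. The form (1) contribution, $\{\,2-\log_2(2^a+2^b+2^c)+\lfloor\cdots\rfloor : 0\le a<b<c\,\}$, is exactly the set $S_3$ of Proposition~\ref{wo1} (after the reindexing $n=2^{a_0}+2^{a_1}+2^{a_2}$), which has order type $\omega^2$. It remains to check that the remaining four families, together with the leftover $\mathscr{D}^A\cap[0,1]$ piece, do not push the order type past $\omega^2$. Families (2) and (3) each contribute a set of defects of the shape $\{\,2-\log_2(1+2^{-b_1}+2^{-b_2}+2^{-b_3})\,\}$ where the exponents $b_i$ run over a one-parameter family, so each such set is order-isomorphic (via the same kind of explicit order-preserving map used in Proposition~\ref{wo1}) to at most $\omega$, hence has order type $\le\omega<\omega^2$; family (2) in fact contributes a single point. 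Families (4) and (5) are two-parameter families ($0\le a<b<c$ with the top exponent determined), so each contributes a set of order type at most $\omega^2$, but I would want to observe that these sets are in fact subsets of (or cofinal-compatible with) the $S_3$-type set, or at worst have order type $\le\omega^2$ and are not cofinal above it. The clean way to finish is Proposition~\ref{interleave}: write $\mathscr{D}^A\cap[0,2]$ as the finite union of these pieces, note each has order type $\le\omega^2$, note that the form-(1) piece (which has order type exactly $\omega^2$) is cofinal in $\mathscr{D}^A\cap[0,2]$ because its supremum is $2$ (take $a=0$, $b\to\infty$, $c=b+1$, say, giving defects $\to 2$) and no defect in the set exceeds $2$, and conclude the order type is exactly $\omega^2$. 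For $r<2$, the set $\mathscr{D}^A\cap[0,r]$ is a proper initial segment of a set of order type $\omega^2$, hence has strictly smaller order type.

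The stability statement then follows: every defect in $\mathscr{D}^A\cap[0,1]$ is $A$-stable by Theorem~\ref{keq1thm}, and every new defect is realized by an $n$ of one of Knuth's forms, each of which we have checked is $A$-stable; so by the proposition that $\dftA(n)=\dftA(m)$ with $n$ stable forces $m$ stable, every defect in $\mathscr{D}^A\cap[0,2]$ is $A$-stable, whence $\mathscr{D}^A_\st\cap[0,2]=\mathscr{D}^A\cap[0,2]$ and the order-type claims transfer verbatim.

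I expect the main obstacle to be the bookkeeping for families (4) and (5): one must verify that for $n$ of those forms the constraint $\nu_2(n)\in\{3,4\}$ (it could drop if exponents coincide, e.g.\ $b+c-a$ colliding with $b$ or $c$) still forces $\acl^A(n)=\lfloor\log_2 n\rfloor+\nu_2(n)-1$ and that the resulting defect set, as an explicit set of reals of the form $k-1-\log_2(1+2^{-b_1}+\cdots)$, has order type at most $\omega^2$ and is not cofinal strictly above the form-(1) piece — in other words, that no ``extra'' limit-of-limits behavior sneaks in from the top exponent being a sum rather than a free parameter. This is the one place where the argument genuinely goes beyond the $s(n)=1$ case and requires care with Knuth's explicit forms rather than a uniform appeal to Proposition~\ref{wo1}.
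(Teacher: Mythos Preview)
Your overall architecture matches the paper's, but there is a genuine error in the step where you ``pin down'' $\acl^A(n)$. You claim that admissibility together with $\acl^A(n)\ge\log_2 n$ forces $\acl^A(n)=\lfloor\log_2 n\rfloor+\nu_2(n)-1$, calling this a ``length-$1$ window''. But the window $[\log_2 n,\ \lfloor\log_2 n\rfloor+\nu_2(n)-1]$ has length roughly $\nu_2(n)-1$, not $1$; for the $\nu_2(n)=4$ forms (cases (2)--(5) of Theorem~\ref{keq2src}) it contains \emph{two or three} integers, so nothing is pinned down this way. Worse, your claimed value $\lfloor\log_2 n\rfloor+3$ would give $\dftA(n)=3+\lfloor\log_2 n\rfloor-\log_2 n>2$, contradicting the very hypothesis $\dftA(n)\le 2$. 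The correct argument, which the paper uses, is that the \emph{constraint} $\dftA(n)\in(1,2]$ itself forces $\acl^A(n)=\lfloor\log_2 n\rfloor+2$ (regardless of whether $\nu_2(n)=3$ or $4$), so every such defect lies in the single set
\[
T=\{\,2+\lfloor\log_2 n\rfloor-\log_2 n : n\ \text{of one of Knuth's forms}\,\}.
\]
Crucially, the paper only shows $\mathscr{D}^A\cap(1,2]\subseteq T$, not equality: for the $\nu_2(n)=4$ forms one cannot rule out $\acl^A(n)=\lfloor\log_2 n\rfloor+3$ for some admissible $A$, which would land the defect above $2$. The lower bound on the order type comes solely from the $\nu_2(n)=3$ piece $T_1=S_3$, where $\acl^A(n)$ \emph{is} forced (via $\nu_2(n)>2\Rightarrow\acl^A(n)\ge\acl(n)\ge\lfloor\log_2 n\rfloor+2$).

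This error also undermines your stability argument, which relies on $\acl^A$ being determined by the form. The paper's stability proof is both simpler and independent of this: if $\lceil\dftA(n)\rceil=2$ then $\nu_2(n)\ge 3$; were $n$ unstable, some $\dftA(2^k n)\le\dftA(n)-1\le 1$, forcing $\nu_2(2^k n)\le 2$ by Theorem~\ref{gkt}, contradicting $\nu_2(2^k n)=\nu_2(n)\ge 3$. Finally, to apply Proposition~\ref{interleave} you must verify that every piece of order type exactly $\omega^2$ is cofinal in $T$; the paper checks explicitly that $T_4$ and $T_5$ each have order type $\omega^2$ \emph{and} supremum $2$, which you flag as a concern but do not carry out.
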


\begin{proof}
Suppose that $\lceil \dftA(n) \rceil=2$.  Then $\dftl(n)\le \dftA(n) \le2$, so
by Theorem~\ref{gkt}, $\lceil \dftl(n) \rceil=2$ unless $\nu_2(n)\le 2$, and
this cannot occur, as then we would have $\dftA(n)\le 1$.  So we can apply
Theorem~\ref{keq2src} to conclude that $n$ can be written in one of the forms
listed there.

Conversely, suppose we have a number $n$ of one of the forms listed in
Theorem~\ref{keq2src}.  Since $\nu_2(n)>2$, we have $\acl^A(n)>\floor{\log_2 n}+1$.  And if $\acl^A(n)\ge
\floor{\log_2 n}+3$, then
\[ \dftA(n) = \floor{\log_2 n}+3-\log_2 n > 2. \]
Thus, $\mathscr{D}^A\cap(1,2]$ is a subset of
\[ T:=\{ 2+\floor{\log_2 n}-\log_2 n: n\ \textrm{satisfies the conclusion of
Theorem~\ref{keq2src}} \}. \]
Let $T_i$ denote the set
\[ \{ 2+\floor{\log_2 n}-\log_2 n: n\ \textrm{falls under case $i$ of
Theorem~\ref{keq2src}} \}, \]
so that $T$ is the union of $T_1$ through $T_5$.  We will examine each of these
sets in turn.

The set $T_1$ is the same as the set $S_3$ from Proposition~\ref{wo1}, and so
has order type $\omega^2$.  In fact, if $n=2^a+2^b+2^c$, with $c>b>a$, then
$\acl^A(n)\le c+2$ by the assumption that $A$ is admissible, and so
$\acl^A(n)=c+2$ and $\dftA(n)<2$, meaning that all of $T_1$, rather than just a
subset, is contained in $\mathscr{D}^A\cap[0,2]$.  As was noted earlier, we can
rewrite $S_3$ as the set
\[ \{ 2-\log_2(1+2^{-a} + 2^{-b}) : 0<a<b\in\mathbb{Z}\}. \]
As $a$ and $b$ go to infinity, this expression goes to $2$, and so we see that
$\sup T_1 = 2$, and thus $T_1$ must be cofinal in $T\subseteq[0,2)$.

The set $T_2$ is easily seen to be equal to the set $\{9-\log_2 135\}$, which
has order type $1=\omega^0$.  This number is also strictly less than $2$ and so
$T_2$ is not cofinal in $T$.

The set $T_3$ is equal to the set $\{5-\log_2(9+3\cdot2^{-a}): a\ge2\}$, which
is a monotonic image of $\mathbb{N}$ and so has order type $\omega$.  It is also
bounded above by $5-\log_2 9<2$ and so not cofinal in $T$.

Finally, we consider the sets $T_4$ and $T_5$; we claim that both are order
isomorphic to $S_3$ and hence to $\omega^2$, and both are cofinal in $T$.  We
will only explicitly treat the case of $T_4$, as $T_5$ is similar.  First
observe that $T_4$ is equal to the set
\[ \{ 2-\log_2(1+2^{-a} + 2^{-b} + 2^{-a-b}) : 0<a<b\in\mathbb{Z}\}. \]
As $a$ and $b$ go to infinity, this expression approaches $2$, so $T_4$ is
cofinal in $T$.  To see that it has order type $\omega^2$, consider the map
\[ 2-\log_2(1+2^{-a}+2^{-b}) \mapsto 2-\log_2(1+2^{-a}+2^{-b}+2^{-a-b})\]
(where here $b>a>0$).  Let $f(a,b)$ denote $2-\log_2(1+2^{-a}+2^{-b})$ and
$g(a,b)$ denote $2-\log_2(1+2^{-a}+2^{-b}+2^{-a-b})$.  Then it is
straightforward to check that $f(a_1,b_1)>f(a_2,b_2)$ if and only if
$(a_1,b_1)>(a_2,b_2)$ lexicographically, which also is true if and only if
$g(a_1,b_1)>g(a_2,b_2)$.  Hence the map above, sending $f(a,b)$ to $g(a,b)$ is
an order isomorphism, proving the claim.  As mentioned above, the case of $T_5$
is similar.

Thus, by Proposition~\ref{interleave}, $T$ has order type $\omega^2$.  And so
$\mathscr{D}^A\cap(1,2]$ has order type at most $\omega^2$, and so
$\mathscr{D}^A\cap[0,2]$ has order type at most $\omega+\omega^2=\omega^2$.  We
also already know it has order type at least $\omega^2$, so it has order type
exactly $\omega^2$.

Also, the supremum of $\mathscr{D}^A\cap[0,2]$ is $2$, so for any $r<2$, the set
$\mathscr{D}^A\cap[0,r]$ is a proper initial segment and so has order type
strictly less than $\omega^2$.

Finally, note that if $\lceil \dftA(n) \rceil=2$, then $n$ must be $A$-stable,
since otherwise, there would be some $k$ with $\dftA(2^k n)<1$; but
$\nu_2(n)\ge 3$ and $\nu_2(2^k n)\le 2$, so this is impossible.  By
Theorem~\ref{keq1thm}, all defects in $\mathscr{D}^A\cap[0,1]$ are stable, and
by the above, all defects in $\mathscr{D}^A\cap(1,2]$ are stable, so the
stability part of the theorem follows.
\end{proof}

\subsection{Summing up: Lower bounds}

So we can now sum up the lower bounds on $\posf^A(k)$ and $\posf^A_\st(k)$ as
follows:

\begin{thm}
\label{posfsummary}
For $k$ a whole number, we have:
\begin{enumerate}
\item For $0\le k\le 2$, we have $\posf^A(k)=\posf^A_\st(k)=k$.
\item For $3\le k\le 7$, we have $2<\posf^A(k)\le\posf^A_\st(k)\le k$.
\item For $8\le k\le 33$, we have $3<\posf^A(k)\le\posf^A_\st(k)\le k$.
\item For $k\ge 34$, we have $\log_2(k+1)-\Cs<\posf^A(k)\le\posf^A_\st(k)\le k$.
\end{enumerate}
\end{thm}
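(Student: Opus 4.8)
The plan is to assemble all four parts from order-type computations already available, using a single principle: if $\mathscr{D}^A\cap[0,c]$ has order type strictly less than $\omega^k$, then $\posf^A(k)>c$. Indeed, since $\mathscr{D}^A$ is well-ordered by the ordering of $\R$ and $[0,c]$ is an initial segment of $[0,\infty)$, the set $\mathscr{D}^A\cap[0,c]$ is an initial segment of $\mathscr{D}^A$; if its order type $\beta$ satisfies $\beta<\omega^k$, then — using that $\mathscr{D}^A$ has order type $\omega^\omega$ by Theorem~\ref{frontpagethm} — the $(\beta+1)$'st element of $\mathscr{D}^A$ exists, lies strictly above $c$, and is among the first $\omega^k$ elements (as $\beta+1\le\omega^k$); hence $\posf^A(k)>c$. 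The upper bounds $\posf^A(k)\le\posf^A_\st(k)\le k$ are already recorded: the first inequality is the proposition preceding Corollary~\ref{omegakbds}, and the second follows from Theorem~\ref{rwo1}, since $\mathscr{D}^A_\st\cap[0,k]$ has order type at least $\omega^k$, forcing the first $\omega^k$ stable defects to lie in $[0,k]$. So in each case it remains only to produce the stated lower bound.

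For part (1), the case $k=0$ is immediate: $\posf^A(0)$ and $\posf^A_\st(0)$ are by definition the least elements of $\overline{\mathscr{D}^A}$ and $\overline{\mathscr{D}^A_\st}$, both equal to $0$ since $\dftA(1)=0$ and $1$ is $A$-stable. For $k=1$ and $k=2$ I would invoke Theorems~\ref{keq1thm} and \ref{keq2thm} respectively: these show $\mathscr{D}^A\cap[0,1]$ has order type $\omega$ with supremum $1$, and $\mathscr{D}^A\cap[0,2]$ has order type $\omega^2$ with supremum $2$, and the same for the stable versions. Since an initial segment of order type $\omega^k$ with supremum $k$ has limit exactly $k$ (by Proposition~\ref{limitvsclos}), we get $\posf^A(k)=\posf^A_\st(k)=k$ for $k=1,2$.

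For part (2), fix $3\le k\le 7$. By Theorem~\ref{keq2thm}, $\mathscr{D}^A\cap[0,2]$ has order type $\omega^2<\omega^3\le\omega^k$; applying the principle above with $c=2$ gives $\posf^A(k)>2$, and combined with the upper bounds we obtain $2<\posf^A(k)\le\posf^A_\st(k)\le k$. For part (3), fix $8\le k\le 33$. By Proposition~\ref{gktreform} we have $\bc(3)=8$, hence $\bc^A(3)\le\bc(3)=8$ by Proposition~\ref{bcprops}; so Theorem~\ref{rwo1} bounds the order type of $\mathscr{D}^A\cap[0,3]$ by something less than $\omega^{\bc^A(3)}\le\omega^{8}\le\omega^k$. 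Applying the principle with $c=3$ gives $\posf^A(k)>3$, and the upper bounds finish the chain. Finally, part (4) for $k\ge 34$ is precisely the content of Corollary~\ref{omegakbds}, so nothing new is needed; the hypothesis $k\ge 34$ merely marks the range in which the Sch\"onhage-based bound $\log_2(k+1)-\Cs$ is the strongest available, the combinatorial bounds $2$ and $3$ being larger for smaller $k$.

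I expect no real obstacle beyond bookkeeping: the substantive work lies in the already-proved Theorems~\ref{keq1thm} and \ref{keq2thm} (via Knuth's classification) and in Sch\"onhage's inequality feeding Corollary~\ref{omegakbds}. The only point demanding care is the passage from ``the order type of an initial segment is $<\omega^k$'' to ``the $\omega^k$-limit exceeds $c$'': one must check that the $(\beta+1)$'st element genuinely exists and still falls within the first $\omega^k$ elements, which is exactly where Theorem~\ref{frontpagethm}'s computation that $\mathscr{D}^A$ (and $\mathscr{D}^A_\st$) has order type $\omega^\omega$ enters — and, symmetrically, the verification that the suprema in Theorems~\ref{keq1thm} and \ref{keq2thm} are attained at the integers $1$ and $2$, which those theorems already supply.
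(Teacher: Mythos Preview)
Your proof is correct and follows essentially the same approach as the paper's: both derive the upper bounds from Corollary~\ref{omegakbds}, handle $k=0,1,2$ via Theorems~\ref{keq1thm} and~\ref{keq2thm}, obtain the lower bound $3$ for part~(3) by combining Proposition~\ref{gktreform} with Theorem~\ref{rwo1}, and cite Corollary~\ref{omegakbds} directly for part~(4). The only cosmetic difference is that the paper phrases the extension to larger $k$ as ``$\posf^A$ is strictly increasing,'' whereas you state and apply the equivalent principle ``if $\mathscr{D}^A\cap[0,c]$ has order type $<\omega^k$ then $\posf^A(k)>c$'' directly.
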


\begin{proof}
The upper bounds are just Corollary~\ref{omegakbds}, so we focus on the lower
bounds.

For $k=0$, this follows as $0\in\mathscr{D}^\acl$.  For $k=1$, this is immediate
from Theorem~\ref{keq1thm}.  For $k=2$, this is immediate from
Theorem~\ref{keq2thm}.  Part (2) then follows as $\posf^A$ is strictly
increasing.

For part (3), observe that by Theorem~\ref{rwo1} and
Proposition~\ref{gktreform}, the order type of $\mathscr{D}^A\cap[0,3]$ is less
than $\omega^8$, and so $\posf^A(8)>3$; the rest then follows as $\posf^A$ is
increasing.  Finally, part (4) is just Corollary~\ref{omegakbds}.
\end{proof}

\section{Concluding Remarks}

In future papers we hope to  prove better bounds on $\posf^\acl(k)$,
$\posf^*(k)$, and their stable versions.  Meanwhile we conjecture:

\begin{conj}
\label{omegakk}
(1) For $k\ge 0$, $\posf^\acl(k)=\posf^\acl_\st(k)=k$.

(2) For $k \ge 0$,  $\posf^*(k)=\posf^*_\st(k)=k$.
\end{conj}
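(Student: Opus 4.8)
Since Corollary~\ref{omegakbds} already establishes $\posf^A(k)\le\posf^A_\st(k)\le k$ for every admissible $A$ and every $k$, the content of Conjecture~\ref{omegakk} lies entirely in the lower bounds $\posf^\acl(k)\ge k$ and $\posf^*(k)\ge k$; the stable assertions then follow automatically, since $\posf^A_\st(k)\ge\posf^A(k)$. Furthermore, to obtain $\posf^A(k)\ge k$ it suffices to prove that $\mathscr{D}^A\cap[0,r]$ has order type strictly less than $\omega^k$ for every real $r<k$: granting this, the initial $\omega^k$ defects of $\mathscr{D}^A$ cannot all lie in $[0,r]$ for any $r<k$, so their supremum $\posf^A(k)$ is at least $k$. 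The crude estimate of Theorem~\ref{rwo1} does not suffice for this, since it only bounds the order type of $\mathscr{D}^A\cap[0,r]$ by $\omega^{\bc^A(r)}$, and $\bc^A(r)$ --- indeed even $\bc(r)$ --- grows much faster than $r$: by Theorem~\ref{gkt}, already $\bc(r)$ can be as large as $8$ for $r$ slightly below $3$. What is needed is finer structural information of the kind used to settle the cases $k\le 2$ in Theorems~\ref{keq1thm} and \ref{keq2thm}.

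The plan is to imitate the proofs of those two theorems. For each $k$ one would want a classification of the integers $n$ with $s(n)=k$ --- equivalently $\dftl(n)\in(k-1,k]$ --- extending the classification of Gioia et al.\ for $k=1$ and Knuth's classification (Theorem~\ref{keq2src}) for $k=2$: the set $\{n:s(n)=k\}$ should be a finite union of families, where each family consists of the integers $n=2^{e_0}+\cdots+2^{e_{j-1}}$ with $e_0>\cdots>e_{j-1}$ for which a prescribed set of consecutive exponent gaps $e_i-e_{i+1}$ is fixed to given constants while the remaining gaps are free positive-integer parameters (possibly subject to linear equalities among themselves), the number of genuinely free parameters being at most $k$. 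Granting such a classification, the argument of Propositions~\ref{wo1} and \ref{wo2}, together with Propositions~\ref{cutandpaste} and \ref{natsum}, shows that each family contributes a well-ordered set of defects of order type at most $\omega^k$. Moreover, just as for the sets $T_1$, $T_4$, $T_5$ appearing in Theorem~\ref{keq2thm}, the family consisting of those $n$ with $\nu_2(n)=k+1$ and $\acl^A(n)$ equal to the binary-method value $\lfloor\log_2 n\rfloor+k$ has order type exactly $\omega^k$ and has defect values of the form $k-\log_2(1+(\text{small}))$ tending to $k$, hence is cofinal in $\mathscr{D}^A\cap(k-1,k]$. Proposition~\ref{interleave} would then force $\mathscr{D}^A\cap(k-1,k]$ to have order type exactly $\omega^k$; adjoining the initial segment $\mathscr{D}^A\cap[0,k-1]$, which has order type $\omega^{k-1}$ by the inductive hypothesis, gives that $\mathscr{D}^A\cap[0,k]$ has order type $\omega^k$ with supremum $k$, whence $\mathscr{D}^A\cap[0,r]$ is a proper initial segment of order type $<\omega^k$ for every $r<k$. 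This completes the inductive step (and also shows, as a byproduct to be carried along in the induction, that $\mathscr{D}^A\cap[0,k]$ has order type exactly $\omega^k$); the base case $k\le 2$ is Theorems~\ref{keq1thm} and \ref{keq2thm}.

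The main obstacle is that the required classification of $\{n:s(n)=k\}$ is not known for $k\ge 4$, and appears to be hard: even the qualitative statement that these integers fall into finitely many families with at most $k$ free exponent-gap parameters is open, and the corresponding statements for star chains are understood even less well. For $k=3$ one has Thurber's bound $\nu_2(n)\le 8$ and his partial structural analysis, so the all-chains case $\posf^\acl(3)=3$ may well be within reach of the present methods; Theorem~\ref{posfsummary} records what can currently be extracted. A secondary difficulty, present even if a classification were in hand, is the verification of the cofinality and order-type claims: one must check that no family in the classification has order type exceeding $\omega^k$, that every family of order type $\omega^k$ is cofinal in $\mathscr{D}^A\cap(k-1,k]$, and in particular that $\acl^A(n)=\lfloor\log_2 n\rfloor+k$ --- rather than something smaller --- holds for a cofinal set of $n$ with $\nu_2(n)=k+1$, an assertion amounting to a lower bound on addition chain length for integers whose binary digits are sufficiently far apart. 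For these reasons Conjecture~\ref{omegakk} is stated but left open.
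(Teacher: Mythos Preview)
The statement is a \emph{conjecture}, not a theorem: the paper does not prove it, and explicitly leaves it open (see the surrounding discussion and Question~\ref{genomegakk}). Your proposal is therefore not competing against a proof in the paper; rather, you have correctly identified that no proof exists and have written a cogent discussion of (i) what the conjecture reduces to after invoking Corollary~\ref{omegakbds}, (ii) what additional input---a classification of $\{n:s(n)=k\}$ into finitely many parametric families with at most $k$ free exponent-gap parameters---would suffice, following the template of Theorems~\ref{keq1thm} and \ref{keq2thm}, and (iii) why that input is unavailable for $k\ge 3$ (and only partially available for $k=3$ via Thurber). Your closing sentence, that the conjecture is stated but left open, matches the paper exactly.

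One small remark on your outline: in the inductive step you would also need to rule out the possibility that some family in the classification, while having at most $k$ free parameters, nonetheless produces defects that accumulate below $k-1$ rather than in $(k-1,k]$---that is, that some $n$ with $s(n)=k$ has $\dftA(n)\le k-1$ because $\acl^A(n)<\acl(n)$. For $A$ the set of all addition chains this cannot happen (since $s(n)=\lceil\dftl(n)\rceil$), but for general admissible $A$, and in particular for star chains, one would need the analogous classification for $s^A(n):=\lceil\dftA(n)\rceil$ rather than for $s(n)$. This is implicit in your discussion but worth making explicit if you develop the argument further.
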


We can say for a fact that there are certain sets of addition chains $A$ for
which we know an analogue of  Conjecture~\ref{omegakk} holds; we could take $A$
to be the set of addition chains generated by the binary method.  Then we would
have $\mathscr{D}^A=\mathscr{D}^A_\st=\bigcup_{k\ge 1} S_k$, where $S_k$ is as
in Proposition~\ref{wo1}.  It is then easy to check that, for $k\ge 2$, we have
$S_k\subseteq (k-2,k-1)$ and then conclude that $\posf^A(k)=k$.  But this
example is a triviality and tells us nothing about the structure of addition
chains.  

So we ask:

\begin{quest}
\label{genomegakk}
Assuming that Conjecture~\ref{omegakk} holds, what conditions on $A$ are
needed to ensure that Conjecture~\ref{omegakk} holds when $\mathscr{D}^A$ is
used in place of $\mathscr{D}^\acl$ or $\mathscr{D}^*$?  Does it hold when $A$
is the set of Hansen chains,
or the set of chains which are star or quasi-star?
\end{quest}

\subsection*{Acknowledgements}

The author thanks J.~C.~Lagarias for suggesting this problem and
for helpful discussions and editorial advice. 
He thanks Andreas Blass for supplying references and J. Arias de Reyna
for suggestions improving the paper.
Work of the author was supported by NSF grants DMS-0943832 and DMS-1101373.

\end{document}